\DeclareFontFamily{U}{mathx}{\hyphenchar\font45}
\DeclareFontShape{U}{mathx}{m}{n}{
      <5> <6> <7> <8> <9> <10>
      <10.95> <12> <14.4> <17.28> <20.74> <24.88>
      mathx10
      }{}
\DeclareSymbolFont{mathx}{U}{mathx}{m}{n}
\DeclareMathSymbol{\bigtimes}{1}{mathx}{"91}
\crefname{algocf}{Algorithm}{Algorithms}
\Crefname{algocf}{Algorithm}{Algorithms}
\DeclareMathOperator{\PDAG}{PDAG}
\DeclareMathOperator{\DAG}{DAG}
\DeclareMathOperator{\PossDe}{PossDe}
\DeclareMathOperator{\De}{De}
\DeclareMathOperator{\An}{An}
\DeclareMathOperator{\Pa}{Pa}
\newcommand{\T}{\top}
\newcommand{\indep}{\mathrel{\text{\scalebox{1.07}{$\perp\mkern-10mu\perp$}}}}
\DeclareMathOperator{\var}{\mathrm{var}}
\DeclareMathOperator{\avar}{\mathrm{avar}}
\DeclareMathOperator{\cov}{\mathrm{cov}}
\DeclareMathOperator{\acov}{\mathrm{acov}}
\DeclareMathOperator{\diag}{\mathrm{diag}}
\DeclareMathOperator{\E}{\mathbb{E}}
\DeclareMathOperator{\Tr}{\mathrm{Tr}}
\newcommand*\dd{\mathop{}\!\mathrm{d}}
\newcommand{\distconvto}{\rightarrow_{d}}
\newcommand{\unif}{\mathrm{unif}}
\newcommand{\N}{\mathcal{N}}
\newcommand{\PD}[1]{\mathbb{R}_{\text{PD}}^{{#1} \times {#1}}}
\newcommand{\g}[1][G]{\mathcal{#1}}
\let \hat \widehat
\newcommand{\Do}{\mathrm{do}}
\DeclareMathOperator{\vect}{vec}
\begin{document}

\title{Efficient Least Squares for Estimating Total Effects under Linearity and Causal Sufficiency}

\author{\name F.~Richard Guo \email ricguo@uw.edu \AND
		Emilija Perkovi\'c \email perkovic@uw.edu \\
		\addr Department of Statistics\\
       University of Washington\\
       Seattle, WA 98195-4322, USA}

\editor{Joris Mooij}

\maketitle

\begin{abstract}
Recursive linear structural equation models are widely used to postulate causal mechanisms underlying observational data. In these models, each variable equals a linear combination of a subset of the remaining variables plus an error term. When there is no unobserved confounding or selection bias, the error terms are assumed to be independent. We consider estimating a total causal effect in this setting. The causal structure is assumed to be known only up to a maximally oriented partially directed acyclic graph (MPDAG), a general class of graphs that can represent a Markov equivalence class of directed acyclic graphs (DAGs) with added background knowledge. We propose a simple estimator based on recursive least squares, which can consistently estimate any identified total causal effect, under point or joint intervention. We show that this estimator is the most efficient among all regular estimators that are based on the sample covariance, which includes covariate adjustment and the estimators employed by the joint-IDA algorithm. Notably, our result holds without assuming Gaussian errors.
\end{abstract}

\begin{keywords}
structural equation model, causal inference, semiparametric efficiency, partially directed acyclic graph, observational study
\end{keywords}

\section{Introduction} \label{sec:intro}

A linear structural equation model (SEM) specifies a causal mechanism underlying a set of variables \citep{bollen1989structural}. Each variable equals a linear combination of a subset of the remaining variables plus an error term. A SEM is \emph{associated} with a mixed graph, also known as a path diagram \citep{wright1921correlation,wright1934method}, which consists of both directed edges and bi-directed edges. A directed edge $i \rightarrow j$ represents that variable $i$ appears as a covariate in the structural equation defining variable~$j$. The equation for variable $j$ takes the form 
\begin{equation}
X_j = \sum_{i: i \rightarrow j} \gamma_{ij} X_i + \epsilon_j, \label{eqs:sem}
\end{equation}
where $\epsilon_j$ is an error term. Often, the errors are assumed to follow a multivariate normal distribution, but it need not be the case. A bi-directed edge $i \leftrightarrow j$ indicates that errors $\epsilon_i$ and $\epsilon_j$ are dependent, which is assumed when there exists an unobserved (i.e., latent) confounder between $i$ and $j$. The mixed graph is usually assumed to be \emph{acyclic}, i.e., the graph does not contain cycles made of directed edges. 

We focus on the setting when there is no unobserved confounder or selection bias, a condition also known as \emph{causal sufficiency}; see  \citet[Chap.~3]{spirtes2000causation} and \citet[Chap.~6]{pearl2009causality}. In this setting, all the error terms are assumed to be  \emph{mutually independent} and the mixed graph associated with the linear SEM is a \emph{directed acyclic graph} (DAG), often called a \emph{causal DAG}. Aside from being a statistical model for observational data, the linear SEM is also a causal model in the sense that it specifies the behavior of the system under interventions (see \cref{sec:sem-intervention}).  Therefore, the total causal effect of one treatment variable (point intervention) or several treatment variables (joint intervention) on some outcome varibles can be defined. 

The underlying causal DAG is usually unknown. In fact, linear SEMs associated with different DAGs may define the same \emph{observed} distribution \citep{drton2011global}. Without further assumptions on the error distributions, the underlying DAG can only be learned from observational data up to its Markov equivalence class, which can be uniquely represented by a completed partially directed acyclic graph (CPDAG) \citep{meek1995causal,andersson1997characterization}. Additional background knowledge, such as knowledge of certain causal relationships \citep{meek1995causal,fangida} or partial orderings \citep{tetrad1998}, restrictions on the error distributions \citep{shimizu2006linear,shimizu2011directlingam,shimizu2014lingam,hoyer08,peters2014identifiability}, and other assumptions \citep{hauserBuehlmann12,wang2017permutation,ernestroth2016,eigenmann17} can be used to further refine the Markov equivalence class of DAGs, resulting in representing the causal structure as a maximally oriented PDAG (MPDAG), which is a rather general class of graphs containing directed and undirected edges that subsumes DAGs and CPDAGs \citep{meek1995causal}. A given total causal effect is \emph{identified} given a graph, if it can be expressed as a functional of the observed distribution, which is the same for every DAG in the equivalence class. Recently, a necessary and sufficient graphical criterion for identification given an MPDAG has been shown by \citet{perkovic20}. In general, there may be more than one identifying functional.

Naturally, the next step is to develop estimators for an identified total effect with desirable properties. When the effect is unidentified, the reader is referred to IDA-type  \citep{maathuis2009estimating,nandy2017estimating} or enumerative \citep{guo2020minimal} approaches, which are beyond the scope of this paper. Among others, we consider the following desiderata. 
\begin{description}[itemsep=0mm,topsep=1ex,partopsep=1ex,parsep=1ex]
\item [{\bf Completeness.}] Can the estimator consistently estimate every identified effect, under either point or joint interventions? 
\item [{\bf Efficiency.}] Does the estimator achieve the smallest asymptotic (co)-variance compared to a reasonably large class of estimators? 
\end{description}
To the best of our knowledge, no estimator proposed in the literature fulfills both desiderata. Indeed, the commonly used covariate adjustment estimators \citep{pearl1993bayesian,shpitser2010validity,maathuis2015generalized,perkovic2015complete} do not exist for certain total effects under joint interventions \citep{nandy2017estimating,perkovic18,perkovic20}. Furthermore, when they exist, even with an optimal adjustment set chosen to maximize efficiency \citep{henckel2019graphical,rotnitzky2019efficient,witte2020efficient},  we will show in \cref{sec:numerical} that covariate adjustment can compare less favorably against a larger class of estimators. 

We propose an estimator that is based on simple recursive least squares, that affirmatively fulfills both desiderata. In particular, our proposed estimator achieves the efficiency bound among all regular estimators that only depend on the sample covariance; see \cref{sec:efficiency} for the precise definition of the class of estimators. Remarkably, our result holds regardless of the type of error distribution in the underlying linear SEM. Our method is implemented in the R \citep{R:2020} package \texttt{eff2} ({\small\url{https://cran.r-project.org/package=eff2}}), which stands for ``efficient effect'' (estimate).

Admittedly, our estimator can be less efficient when compared to an even larger class of estimators, such as the class of all regular estimators considered in standard semiparametric theory. A semiparametric efficient estimator, relative to all regular estimators, can in principle be constructed by computing the efficient influence function and employing estimation strategies such as one-step correction or targeted maximum likelihood estimation \citep{van2011targeted}. In fact, the semiparametric model we consider (see \cref{eqs:p-G}) is a generalized, multivariate location-shift regression model with additional conditional independence constraints; see also \citet[\S 5.1]{tsiatis2007semiparametric} and \citet[\S 4.3]{bickel1993efficient}. While it is theoretically possible to construct a semiparametric efficient estimator by firstly estimating the error score and then solving the associated estimating equations \citep[\S 7.8]{bickel1993efficient}, the resulting estimator tends to be too complicated and unstable for practical purposes unless the sample size is very large \citep[page 111]{tsiatis2007semiparametric}. On the other hand, despite the potential loss of efficiency, our least squares estimator is easily computed and numerically stable. Hence, our proposal can be viewed as a deliberate trade-off between optimality and practicality. 

The paper is organized as follows. In \cref{sec:lit-rev}, we review related work on efficient estimation of total effects in over-identified settings. In \cref{sec:prelim}, we introduce the preliminaries on linear structural equation models, causal graphs and the identification of total causal effects. The concept of bucket decomposition is introduced. In \cref{sec:block}, we introduce a block-recursive representation for the observational data and identify the total causal effect under such a representation. We first derive the proposed least squares estimator by finding the maximum likelihood estimator (MLE) under the assumption of Gaussian errors in \cref{sec:MLE}. We then prove the optimal efficiency of our proposed estimator under arbitrary error distributions in \cref{sec:efficiency}. Additional preliminaries, proofs and numerical results can be found in the Appendix.

 \section{Related work} \label{sec:lit-rev}
The statistical performance of an estimator of a total causal effect, in \emph{over-identified} settings, has recently received more attention; see, e.g, \citet{kuroki2003covariate,henckel2019graphical,witte2020efficient,gupta2020estimating,rotnitzky2019efficient,smucler2020efficient,kuroki2020variance}. Here, ``over-identified'' \citep{koopmans1950identification} refers to the fact that the total causal effect can be expressed as more than one functional of the (population) observed distribution, all of which coincide due to the additional conditional independence constraints obeyed by the observed distribution. For example, in the case where a total causal effect can be identified through covariate adjustment, usually there exists more than one valid adjustment set \citep{henckel2019graphical}. This is in contrast to the more traditional setting of causal inference, where the observed data distribution is nonparametric and is not expected to satisfy extra conditional independences.

Intuitively, the conditional independences in over-identified models can be exploited to maximize asymptotic efficiency; see, e.g., \citet{sargan1958estimation,hansen1982large} for early works in this direction. Under a linear SEM with independent errors, a total causal effect can be estimated via covariate adjustment as the least squares coefficient from the regression of the 
outcome on the treatment and adjustment variables. \citet{henckel2019graphical} recently showed that, under a linear SEM with independent errors, a valid adjustment set that minimizes asymptotic variance, also referred to as the \emph{optimal adjustment set}, can be  graphically characterized; see also \citet{witte2020efficient} for further properties of such an optimal set. This result was generalized by \citet{rotnitzky2019efficient} beyond linear SEMs: an optimal adjustment set is shown to always exist for point interventions, and a semiparametric efficient estimator is developed for this case.  Note that, while valid adjustment sets (called ``time-independent'' adjustment sets by \citet{rotnitzky2019efficient}) exist for point interventions \citep[Proposition 4.2]{perkovic20}, they may not exist for joint interventions \citep{nandy2017estimating,perkovic18,perkovic20}. 

Less is known about how to efficiently estimate the total causal effect of a joint intervention, at least in a generic fashion. For linear SEMs with independent errors, with the knowledge of the parents of the treatment variables in the underlying causal DAG, \citet{nandy2017estimating} considered two estimators for the joint-IDA algorithm, one based on recursive least squares and one based on a modified Cholesky decomposition. However, the efficiency properties of these estimators were not explored. In \cref{sec:numerical}, numerical comparisons will show that our proposed estimator significantly outperforms these estimators.

Other results on the linear SEM include explicit calculations and comparisons for typical examples with either a particular structure or only a few variables; see, e.g., \citet{kuroki2004selection,gupta2020estimating}. Gaussian errors are also assumed in these calculations.

 \section{Linear SEMs, causal graphs and effect identification} \label{sec:prelim}
\subsection{Linear SEMs under causal sufficiency}
A linear SEM postulates a causal mechanism that generates data. Let $X$ denote a vector of variables generated by a linear SEM, where $X$ is indexed by $V$ ($X = X_V$). Let $\mathcal{D}$ be the associated DAG on vertices $V$. For this $|V|$-dimensional random vector $X$, the model in \cref{eqs:sem} can be compactly rewritten as
\begin{equation} \label{eqs:sem-vector}
{X}= \Gamma^{\T} {X} + \epsilon, \quad \Gamma = (\gamma_{ij}), \quad i \rightarrow j \text{ not in } \mathcal{D} \,\Rightarrow\, \gamma_{ij} = 0.
\end{equation}
where $\Gamma \in \mathbb{R}^{|V| \times |V|}$ is a coefficient matrix, and $\epsilon = (\epsilon_i)$ is a $|V|$-dimensional random vector. DAG $\g[D]$ is associated with the linear SEM in \cref{eqs:sem} in the sense that the non-zero entries of $\Gamma$ correspond to the edges in $\mathcal{D}$. 

Under causal sufficiency (no latent variables), we assume
\begin{equation} \label{eqs:sem-error}
\{ \epsilon_i: i \in V\} \text{ are independent},\ \E \epsilon = 0,\  \E \epsilon \epsilon^{\T} \succ \bm{0},
\end{equation}
where for a real symmetric matrix $A$, $A \succ \bm{0}$ means $A$ is positive definite. The errors $\{\epsilon_i: i \in V\}$ are not necessarily Gaussian, nor identically distributed. 

The law $P(X)$ is called the \emph{observed distribution}. For a given $\mathcal{D}$, we will use $\mathcal{P}_{\mathcal{D}}$ to denote the set of possible laws of $X$, namely the collection of $P(X)$ as $\Gamma$ and the error distribution vary subject to \cref{eqs:sem-vector,eqs:sem-error}. The linear SEM poses certain restrictions on the set of laws $\mathcal{P}_{\mathcal{D}}$. Let $\Pa(i, \mathcal{D})$ denote the set of parents of vertex $i$, i.e., $\{j: j \rightarrow i \text{ is in } \mathcal{D}\}$. For any $P \in \mathcal{P}_{\mathcal{D}}$, among other constraints, (i) $P$ factorizes according to $\mathcal{D}$, (ii) $\E[X_i \mid X_{\Pa(i, \mathcal{D})}]$ is linear in $X_{\Pa(i, \mathcal{D})}$ and (iii) $\var[X_i \mid X_{\Pa(i, \mathcal{D})}]$ is constant in $X_{\Pa(i, \mathcal{D})}$. 

We observe $n$ iid samples generated by the model above, namely $X^{(i)} = (I - \Gamma)^{-\T} \epsilon^{(i)}$ for $i=1,\dots,n$. Note that $(I - \Gamma)$ is invertible because $\Gamma$ can be permuted into a lower-triangular matrix by a topological ordering (i.e., causal ordering) of vertices in $\mathcal{D}$. 

\subsection{Interventions and total causal effects} \label{sec:sem-intervention}
The assumed linear SEM also dictates the behavior of the system under interventions. Let $A \subseteq V$ be a set of vertices indexing treatment variables $X_A$. We use $\Do(X_A = x_A)$ to denote \emph{intervening} on variables $X_A$ and forcing them to take values $x_A$ \citep{pearl1995causal}. We call this a \emph{point} intervention if $A$ is a singleton, and a \emph{joint} intervention if $A$ consists of several vertices, which correspond to the case of multiple treatments. While $X_A$ is fixed to $x_A$, the remaining variables are generated by their corresponding structural equations \cref{eqs:sem}, with each $X_i$ for $i \in A$ appearing in the equations replaced by the corresponding enforced value $x_i$ \citep{strotz1960recursive}. This generating mechanism defines the \emph{interventional distribution}, denoted by $P(X | \Do(X_A = x_A))$, where the conditional probability notation is only conventional. More formally, the interventional distribution is expressed as
\begin{equation} \label{eqs:g-formula}
P(X|\Do(X_A = x_A)) = \prod_{j \in A} \delta_{x_j}(X_j) \prod_{i \notin A} P\left(X_i | X_{\Pa(i, \mathcal{D})} \right),
\end{equation}
where $\delta$ denotes a Dirac measure. Factor $P\left(X_i | X_{\Pa(i)} \right)$ is defined by the structural equation for $X_i$. \cref{eqs:g-formula} is known as the truncated factorization formula \citep{pearl2009causality}, manipulated density formula \citep{spirtes2000causation} or the g-formula \citep{robins1986new}.

\begin{definition}[Total causal effect, \citealp{pearl2009causality,nandy2017estimating}] \label{def:total-effect}
Let $X_A$ be a vector of treatment variables and $X_Y$ with $Y \in V \setminus A$ be an outcome variable. The total causal effect of $X_A$ on $X_Y$ is defined as the vector $\tau_{AY} \in \mathbb{R}^{|A|}$, where
\begin{equation*}
(\tau_{AY})_i = \frac{\partial}{\partial x_{A_i}} \E[X_Y \mid \Do(X_A = x_A)], \quad i=1,\dots,|A|.
\end{equation*}
\end{definition}

That is, $\tau_{AY}$ is the gradient of the linear map $x_A \mapsto \E[X_Y | \Do(X_A = x_A)]$. When multiple outcomes $Y= \{Y_1,\dots, Y_k\}$, $k > 1$, are considered, the total causal effect of $X_A$ on $X_{Y_1}, \dots, X_{Y_k}$ can be defined by concatenating $\tau_{A Y_1}, \dots, \tau_{A Y_k}$. Therefore, throughout, we assume the outcome variable is a singleton without loss of generality. Each coordinate of the total causal effect $\tau_{AY}$ can be expressed as a sum-product of the underlying linear SEM coefficients along certain causal paths from $A$ to $Y$ in $\mathcal{D}$, that is, certain paths of the form $A_1 \to \dots \to Y_i$ for $A_1 \in A$; see also \citet{wright1934method,sullivant2010trek}.

\subsection{Causal graphs}
Two different linear SEMs on the same set of variables can define the same observed distribution. For example, under Gaussian errors, linear SEMs associated with DAGs $A \rightarrow Y$ and $A \leftarrow Y$, define the same set of observed distributions, namely the set of centered bivariate Gaussian distributions.
Without making additional assumptions on the error distribution, such as non-Gaussianity \citep{shimizu2006linear}, partial non-Gaussianity \citep{hoyer08}, or equal variance of errors \citep{peters2014identifiability,chen2019causal}, the underlying causal DAG can only be learned from the observed distribution up to its Markov equivalence class \citep{pearl1995theory,chickering2002optimal}.

\paragraph*{CPDAGs} Two DAGs on the same set of vertices are Markov equivalent if they encode the same set of d-separation relations between the vertices. The d-separations between the vertices, prescribe conditional independences between the corresponding variables (known as the Markov condition \citep[\S3.2.2]{lauritzen1996graphical}); see \cref{apx:sec:graph} for the definition of d-separation and more background.  This equivalence relation defines a Markov equivalence class, which consists of DAGs as elements. A Markov equivalence class can be uniquely represented by a \emph{completed partially directed acyclic graph} (CPDAG), also known as an \emph{essential graph} \citep{meek1995causal,andersson1997characterization}. A CPDAG $\mathcal{C}$ is a graph on the same set of vertices, that can contain both directed and undirected edges. We use $[\mathcal{C}]$ to denote the Markov equivalence class represented by CPDAG $\mathcal{C}$. A directed edge $i \rightarrow j$ in $\mathcal{C}$ implies $i \rightarrow j$ is in every $\mathcal{D} \in [\mathcal{C}]$, whereas an undirected edge $i - j $ in $\mathcal{C}$ implies there exist $\mathcal{D}_1, \mathcal{D}_2 \in [\mathcal{C}]$ such that $i \rightarrow j$ in $ \mathcal{D}_1$ but $i \leftarrow j $ in  $\mathcal{D}_2$. Given a DAG $\mathcal{D}$, the CPDAG $\mathcal{C}$ representing the Markov equivalence class of $\mathcal{D}$ can be drawn by keeping the skeleton of $\mathcal{D}$, adding all the unshielded colliders from $\mathcal{D}$ and completing the orientation rules R1--R3 of \citet{meek1995causal}; see \cref{fig:orientationRules} in \cref{apx:sec:graph}. For example, DAGs $A \rightarrow Y$ and $A \leftarrow Y$ are represented by CPDAG $A - Y$.
To slightly abuse the notation, for a distribution $Q$, we write $Q \in [\mathcal{C}]$ if $Q$ factorizes according to some DAG $\mathcal{D} \in [\mathcal{C}]$; see \citet[\S 3.2.2]{lauritzen1996graphical}.

There are various \emph{structure learning} algorithms that can be used to uncover CPDAG $\mathcal{C}$ from observational data. Some well-known examples are the PC algorithm \citep{spirtes2000causation} and the greedy equivalence search \citep{chickering2002optimal}. Choosing an appropriate algorithm for the data set at hand is beyond the scope of this paper; the reader is referred to \citet[\S 4]{drton2017structure} for a recent overview. 

\paragraph*{MPDAGs} Certain background knowledge, if present, can be used to further  orient some  undirected edges in a CPDAG $\mathcal{C}$. Typically, knowledge of temporal orderings can inform the orientation of certain undirected edges; see \citet[\S 5.8.4]{spirtes2000causation} for an example. Adding these background-knowledge orientations and the additionally implied orientations based on the orientation rules of \citet{meek1995causal} to $\mathcal{C}$ results in a \emph{maximally oriented partially directed graph} (MPDAG) $\mathcal{G}$. See \cref{fig:orientationRules} and \cref{algo:MPDAG}  in \cref{apx:sec:graph}. MPDAGs are a rather general class of graphs that subsumes both DAGs and CPDAGs. An MPDAG $\g$ represents a \emph{restricted} Markov equivalence class of DAGs, which we also denote by $[\g]$.  Analogously to the case of a CPDAG,  $i \rightarrow j $ in $\mathcal{G}$ implies $i \rightarrow j$ is in every $\mathcal{D} \in [\mathcal{G}]$, and   $i - j$ in $\mathcal{G}$ implies there exist $\mathcal{D}_1, \mathcal{D}_2 \in [\mathcal{G}]$ such that $i \rightarrow j $ in $\mathcal{D}_1$ but $i \leftarrow j$ in $ \mathcal{D}_2$. 

For the rest of the paper, we will assume that we have access to an MPDAG $\mathcal{G}$ that represents our structural knowledge about the underlying DAG $\mathcal{D}$. That is,
\begin{equation}
\text{causal DAG } \mathcal{D} \in [\mathcal{G}], \quad \mathcal{G} \text{ is an MPDAG},
\end{equation}
where $[\mathcal{G}]$ represents a collection of DAGs that are Markov equivalent, but can be strictly smaller than the corresponding Markov equivalence class due to background knowledge. 

\subsection{Causal effect identification}
Throughout the paper we will use the following notations.
Given treatment variables $X_A$ and an outcome variable $X_Y$ such that $Y \notin A$, we are interested in learning the total causal effect $\tau_{AY}$.
We assume that we have access to an MPDAG $\g$, and to observational data that are generated as iid samples from a linear SEM defined by \cref{eqs:sem-vector,eqs:sem-error}, where the causal DAG $\mathcal{D}$ is in $[\g]$.
 Before estimation can be performed, we need to make sure that $\tau_{AY}$ can be identified from observational data. That is, we need to ensure that $\tau_{AY}$ can be expressed as a functional of the observed distribution that is the same for \emph{every} DAG in $[\mathcal{G}]$. We have the following graphical criterion.

\begin{theorem}[{\citealp{perkovic20}}] \label{thm:id-criterion}
The total causal effect $\tau_{AY}$ of $X_A$ on $X_Y$ is identified given an MPDAG $\g$ if and only if there is no proper, possibly causal path from $A$ to $Y$ in $\mathcal{G}$ that starts with an undirected edge. 
\end{theorem}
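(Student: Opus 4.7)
The plan is to prove both directions separately. The ``only if'' direction proceeds by constructing a counterexample showing non-identifiability, while the ``if'' direction proceeds by exhibiting an identifying functional and verifying it is invariant across the equivalence class $[\mathcal{G}]$.

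For the necessity direction, suppose there is a proper possibly causal path $p = (A_1, V_1, \ldots, V_k, Y)$ from $A_1 \in A$ to $Y$ in $\mathcal{G}$ whose first edge $A_1 - V_1$ is undirected. I would construct two DAGs $\mathcal{D}_1, \mathcal{D}_2 \in [\mathcal{G}]$ that agree on every directed edge of $\mathcal{G}$ but orient $A_1 - V_1$ oppositely, as $A_1 \to V_1$ in $\mathcal{D}_1$ and $V_1 \to A_1$ in $\mathcal{D}_2$. The main graph-theoretic work is verifying that both reorientations extend to valid DAGs in $[\mathcal{G}]$; this uses the fact that $A_1 - V_1$ is undirected in $\mathcal{G}$ together with completeness of the Meek orientation rules. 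I would then choose linear SEM parameters so that the suffix $V_1 - \cdots - Y$, which is possibly causal in $\mathcal{G}$ and therefore directed toward $Y$ in every DAG of $[\mathcal{G}]$, carries a nonzero product of edge coefficients. This contributes a nonzero term to $\tau_{A_1 Y}$ in $\mathcal{D}_1$ (where $A_1 \to V_1 \to \cdots \to Y$ is a causal path) but not in $\mathcal{D}_2$. Since $\mathcal{D}_1$ and $\mathcal{D}_2$ are Markov equivalent (under the restricted equivalence class $[\mathcal{G}]$) and the error distributions can be aligned so that the marginal law of $X$ coincides, the two SEMs realize the same observed distribution while yielding distinct values of $\tau_{A_1 Y}$, hence $\tau_{AY}$ is not identified.

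For the sufficiency direction, I would argue that when no proper possibly causal path from $A$ to $Y$ in $\mathcal{G}$ starts with an undirected edge, every directed causal path from $A$ to $Y$ in any $\mathcal{D} \in [\mathcal{G}]$ corresponds to a possibly causal path in $\mathcal{G}$; by hypothesis such a path must begin with a directed edge in $\mathcal{G}$, and hence begins the same way in every $\mathcal{D} \in [\mathcal{G}]$. From here, I would exhibit a concrete identifying functional---for instance, the recursive least squares functional built from the bucket decomposition previewed in the introduction---and verify directly that its value depends only on the observed distribution $P(X)$, not on the particular DAG $\mathcal{D} \in [\mathcal{G}]$ chosen. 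This reduces the identification claim to checking that the product of regression coefficients along the causal paths is well-defined at the level of the MPDAG.

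The main obstacle lies in the necessity direction. Producing two DAGs in $[\mathcal{G}]$ that differ only along $p$ requires a careful graph-theoretic argument to show that the single-edge flip is compatible with all background knowledge encoded in $\mathcal{G}$ (no new v-structures, no cycles, and no violation of Meek's rules), which typically proceeds by identifying the ``chain component'' containing $A_1 - V_1$ and noting that its undirected skeleton admits both orientations as linear extensions. Matching the observed distributions under the two orientations is straightforward in the Gaussian case by choosing error variances to preserve $\var(X_{A_1})$ and $\var(X_{V_1})$, but requires a slightly more delicate choice under the non-Gaussian errors allowed by \cref{eqs:sem-error}; one natural route is to first establish the Gaussian counterexample and then invoke the fact that $\mathcal{P}_{\mathcal{D}_1}$ and $\mathcal{P}_{\mathcal{D}_2}$ share the same Gaussian submodel. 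The sufficiency direction is comparatively routine once the invariance of the causal path structure across $[\mathcal{G}]$ is established.
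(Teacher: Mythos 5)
You should note first that the paper does not prove this statement at all: it is imported verbatim from \citet{perkovic20} (Proposition 3.2 there), and the text after the theorem only records how that external proof goes --- sufficiency via the generalized truncated factorization (\cref{thm:id-nonpara}, and in the linear case \cref{prop:id}), necessity via two linear SEMs with Gaussian errors whose observed distributions coincide but whose $\tau_{AY}$'s differ. Your overall architecture (counterexample for necessity, invariant functional for sufficiency) matches that route, and your observation that a Gaussian counterexample suffices even within the model of \cref{eqs:sem-error} is exactly the remark the paper makes. The problem is that your necessity argument, as written, rests on a false intermediate claim.

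Concretely: a possibly causal path in $\mathcal{G}$ only forbids edges oriented \emph{against} the path in $\mathcal{G}$; it does not force the undirected edges on the suffix $V_1 - \cdots - Y$ to point toward $Y$ in every $\mathcal{D} \in [\mathcal{G}]$. For instance, in the CPDAG $b - a - c$ (with $b,c$ nonadjacent) the path $b - a - c$ is possibly causal, yet it is a causal path in only one of the three DAGs of the class. So you cannot assert that the suffix ``carries a nonzero product of edge coefficients'' in $\mathcal{D}_1$ without first \emph{constructing} a DAG in $[\mathcal{G}]$ in which the entire path is oriented toward $Y$; that construction is the real content of the necessity proof, and it is standardly done by first reducing to an \emph{unshielded} proper possibly causal path starting with an undirected edge (a lemma of \citet{perkovic17} used by \citet{perkovic20}) and then exhibiting one DAG orienting that path entirely toward $Y$ and another orienting its first edge into $A_1$. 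Relatedly, your plan to flip the single edge $A_1 - V_1$ while keeping every other orientation fixed is in general impossible (only a covered edge can be reversed in isolation within a Markov equivalence class), though it is also unnecessary. Finally, even with both DAGs in hand you must rule out cancellation: in $\mathcal{D}_2$ other causal paths from $A_1$ to $Y$ may still contribute to the effect, so the edge weights have to be chosen (e.g., unit weights along the unshielded path, with the remaining coefficients controlled) so that the two total effects provably differ, not merely so that one path's contribution is nonzero. The sufficiency half of your sketch is consistent with the cited route through the bucket decomposition, but calling it routine understates that the invariance of the identifying functional rests on \cref{cor:restrictive}, \cref{lem:pa-D} and \cref{thm:id-nonpara} rather than on the first edge of causal paths being directed in $\mathcal{G}$.
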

Theorem \ref{thm:id-criterion} is Proposition 3.2 of \citet{perkovic20}, which holds for nonparametric causal graphical models. It does not require that the data is generated by a linear SEM. However, \citet{perkovic20} proves that when the criterion fails, then two linear SEMs with Gaussian errors can be constructed such that their observed distributions coincide but their $\tau_{AY}$'s are different. Hence, even if we restrict ourselves to linear SEMs, \cref{thm:id-criterion} still holds. 

A few terms need some explanation. A \textit{path} from $A$ to $Y$ in $\mathcal{G}$ is a sequence of distinct vertices $\langle v_1, \dots, v_k \rangle$ for $k >1$ with $v_1 \in A$ and $v_k = Y$, such that every pair of successive vertices are adjacent in $\g$. The path is \emph{proper} when only its first vertex is in $A$. The path is \emph{possibly causal} if no edge $v_l \leftarrow v_r$ is in $\mathcal{G}$ for $1 \leq l < r \leq k$. The reader is referred to \cref{apx:sec:graph} for more graphical preliminaries. When $\mathcal{G}$ satisfies Theorem \ref{thm:id-criterion} relative to vertex sets $A$ and $Y$, the interventional distribution $P(X_Y | \Do(X_A = x_A))$, and hence the total effect, can be computed from the observed distribution $P(X)$. To express the identification formula, we require the following concepts.

\subsubsection{Buckets and bucket decomposition}  Let $\mathcal{G}=(V, E, U)$ be a partially directed graph, where $V$ is the set of vertices, and $E$ and $U$ are sets of directed and undirected edges respectively. Let $B_1, \dots, B_K$ be the \emph{maximal connected components} of the undirected graph $\mathcal{G}_U := (V, \emptyset, U)$. Then $V = B_1\,\dot{\cup} \dots \dot{\cup}\,B_K$, where symbol $\dot{\cup}$ denotes disjoint union. Note that all the directed edges within each $B_i$ are due to background knowledge. If we ignore the distinction between directed and undirected edges, then the subgraph induced by each $B_i$ is chordal \citep[\S4]{andersson1997characterization}.

Suppose the connected components are ordered such that 
\begin{equation} \label{eqs:bucket-ordering}
i \rightarrow j \in E, \  i \in B_i, \  j \in B_j \quad \Rightarrow \quad i < j.
\end{equation}
One can show that such a \emph{partial causal ordering} always exists, though it may not be unique; see \cref{alg:pto} in \cref{apx:sec:graph} to obtain such an ordering. Our result does not depend on the particular choice of partial causal ordering. We call $B_1, \dots, B_K$ the \emph{bucket decomposition} of $V$ and call each $B_k$ for $k = 1,\dots, K$ a \emph{bucket}; see \cref{fig:PDAG-and-saturated}(a) for an example.
If it is clear which graph $\g$ is being referred to, we will shorten $\Pa(j, \g)$ as $\Pa(j)$ to reduce clutter. For a set of vertices $C$ in $\g$, we use $\Pa(C) := \cup_{i \in C} \Pa(i) \setminus C$ to denote the set of their \emph{external parents}. Clearly, $\Pa(B_k) \subseteq B_{[k-1]}$, where $B_{[k-1]} := B_1 \cup \dots \cup B_{k-1}$. 

\begin{lemma}\label{lemma:anc-bucket21} 
Let $i$ and $j$ be two distinct vertices in MPDAG $\mathcal{G} = (V, E, U)$ such that $i \rightarrow j \in  E$. Suppose that there is no undirected path from $i$ to $j$ in $\mathcal{G}$. 
If there is a vertex $k$, and an undirected path  $j - \dots - k$ in $\mathcal{G}$, then $i \rightarrow k \in E$.
\end{lemma}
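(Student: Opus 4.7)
The plan is to induct along the undirected path and propagate a stronger invariant than just the desired conclusion. Write the given undirected path as $j = v_0 - v_1 - \cdots - v_m = k$. I will prove by induction on $l = 0, 1, \ldots, m$ the conjunction
\begin{equation*}
(\mathrm{a}_l)\ i \rightarrow v_l \in E, \qquad (\mathrm{b}_l)\ \text{there is no undirected path from } i \text{ to } v_l \text{ in } \mathcal{G}.
\end{equation*}
The base case $l = 0$ is immediate, since $v_0 = j$ and both $(\mathrm{a}_0)$ and $(\mathrm{b}_0)$ are exactly the hypotheses of the lemma. Applying $(\mathrm{a}_m)$ then yields $i \rightarrow k \in E$.

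For the inductive step, assume $(\mathrm{a}_l)$ and $(\mathrm{b}_l)$, and consider the next edge $v_l - v_{l+1} \in U$. I examine the possible relation between $i$ and $v_{l+1}$ in $\mathcal{G}$ and rule out every alternative to $i \rightarrow v_{l+1} \in E$. If $i$ and $v_{l+1}$ are non-adjacent, then Meek's rule R1 applied to $i \rightarrow v_l$ and $v_l - v_{l+1}$ forces $v_l \rightarrow v_{l+1}$; since $\mathcal{G}$ is an MPDAG, this forced orientation is already present, contradicting $v_l - v_{l+1} \in U$. If $v_{l+1} \rightarrow i \in E$, then $v_{l+1} \rightarrow i \rightarrow v_l$ together with $v_{l+1} - v_l$ triggers Meek's rule R2, forcing $v_{l+1} \rightarrow v_l$ — again a contradiction. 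Finally, if $i - v_{l+1} \in U$, then $i - v_{l+1} - v_l$ is an undirected path from $i$ to $v_l$ (the vertices $i, v_{l+1}, v_l$ are distinct because $i \rightarrow v_l \in E$), contradicting $(\mathrm{b}_l)$. Hence $(\mathrm{a}_{l+1})$ holds.

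It remains to establish $(\mathrm{b}_{l+1})$. Suppose for contradiction there were an undirected path $i - w_1 - \cdots - w_s = v_{l+1}$ in $\mathcal{G}$. Concatenating it with the reverse $v_{l+1} - v_l - \cdots - v_0 = j$ of the prefix of the original path produces an undirected walk from $i$ to $j$. Standard walk-to-simple-path reduction (repeatedly excising any cycle formed by a repeated vertex) then yields a genuine undirected path from $i$ to $j$, contradicting the lemma's hypothesis. This closes the induction.

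The main obstacle, beyond invoking the correct Meek rules, is the bookkeeping: $(\mathrm{a}_l)$ alone does not propagate, because ruling out the $i - v_{l+1}$ alternative in the case analysis requires the auxiliary condition $(\mathrm{b}_l)$. One must therefore carry $(\mathrm{b}_l)$ as part of the inductive hypothesis and re-derive it at each step from the original ``no undirected path to $j$'' assumption via concatenation and walk shortening.
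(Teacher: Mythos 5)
Your proof is correct and takes essentially the same route as the paper's: an induction along the undirected path $j - \dots - k$ in which Meek rules R1 and R2 force $i \rightarrow v_{l+1}$ at every step, with the remaining alternative $i - v_{l+1}$ excluded by producing a forbidden undirected path. The paper handles your auxiliary invariant $(\mathrm{b}_l)$ more economically by noting once at the outset that $i$ cannot lie on the path (otherwise a prefix of it would already be an undirected path from $i$ to $j$), after which the edge $i - v_{l+1}$ is ruled out directly by prepending it to the reversed path prefix back to $j$; this also disposes of the degenerate possibility $i = v_{l+1}$, which your case analysis leaves implicit but which your own invariant $(\mathrm{b}_l)$ covers.
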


By definition of the parent set above we have that $\Pa(B_k) = \cup_{i \in B_k} \Pa(i) \setminus B_k$, $k = 1, \dots , K$. However, since a bucket $B_k$ is a maximal subset of $V$ that is connected by undirected edges in $\g$, \cref{lemma:anc-bucket21} implies the following important property.

\begin{corollary}[Restrictive property] \label{cor:restrictive}  
Let $B_1, \dots, B_K$ be the bucket decomposition of $V$ in MPDAG $\g = (V,E,U)$. Then, all vertices in the same bucket have the same set of external parents, namely
\begin{equation*}
\Pa(B_k) = \Pa(i) \setminus B_k, \quad \text{for any } i \in B_k,\, k = 1, \dots , K.
\end{equation*}
\end{corollary}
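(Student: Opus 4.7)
My plan is to deduce \cref{cor:restrictive} directly from \cref{lemma:anc-bucket21}. I will fix an arbitrary bucket $B_k$ and two distinct vertices $i, j \in B_k$, and argue that $\Pa(i) \setminus B_k = \Pa(j) \setminus B_k$. By symmetry it suffices to establish one inclusion, so I will take an arbitrary external parent $p \in \Pa(i) \setminus B_k$ and show $p \in \Pa(j) \setminus B_k$.

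To invoke \cref{lemma:anc-bucket21} with $(p, i, j)$ playing the roles of $(i, j, k)$ in the lemma's statement, I need to verify three things: (a) $p \to i$ is a directed edge in $E$; (b) there is no undirected path between $p$ and $i$ in $\g$; and (c) there is an undirected path between $i$ and $j$ in $\g$. Item (a) is immediate from $p \in \Pa(i)$ together with $p \notin B_k$ (so the edge cannot be one of the oriented intra-bucket edges one might worry about; it lives wholly outside $B_k \times B_k$). Item (c) is the key reason buckets are the right objects: since $B_k$ is by construction a maximal connected component of the undirected subgraph $(V, \emptyset, U)$, any two of its members are linked by an undirected path. Item (b) uses the same maximality in the other direction, contrapositively: if $p$ and $i$ were connected by an undirected path then $p$ would lie in the maximal undirectedly-connected component containing $i$, namely $B_k$, contradicting $p \notin B_k$. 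The lemma then yields $p \to j \in E$, and combined with $p \notin B_k$ this gives $p \in \Pa(j) \setminus B_k$, as required.

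I do not expect a real obstacle: once \cref{lemma:anc-bucket21} is granted, the only content of the corollary is to apply it twice, by repackaging the definition of a bucket as a maximal undirectedly-connected component in two complementary ways, once to produce the undirected path required as a hypothesis and once to rule out the undirected path forbidden as a hypothesis. Trivial cases such as $|B_k| = 1$ make the claim vacuous. The only step warranting care when writing the argument up is the orientation bookkeeping in item (a)---reminding the reader that $\Pa$ refers to directed parents in $\g$ and that $p \notin B_k$ excludes no edges from $\Pa(i)$ that we wanted.
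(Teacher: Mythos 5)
Your proof is correct and follows essentially the same route as the paper, which derives \cref{cor:restrictive} directly from \cref{lemma:anc-bucket21} by using exactly the two facts you isolate: connectivity of a bucket in the undirected subgraph supplies the undirected path $i - \dots - j$ required by the lemma, and maximality rules out any undirected path from the external parent $p$ to $i$. The application of the lemma with $(p,i,j)$ in the roles of its $(i,j,k)$ and the symmetry step are precisely the intended argument.
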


The causal identification formula for $P(X_Y |\Do(X_A = x_A))$ of \citet{perkovic20} relies on a decomposition of certain \emph{ancestors} of $Y$ in MPDAG $\g$ according to the buckets.
We call vertex $i$ an ancestor of vertex $j$ in $\g$ if there exists a directed path  $i \to \dots \to j$ in $\g$; we use the convention that $j$ is an ancestor of itself. We denote the set of ancestors of $j$ in $\g$ as $\An(j, \g)$, or shortened as $\An(j)$.

Let $\mathcal{G}_{V \setminus A} = (V \setminus A, E', U')$ denote the subgraph of $\mathcal{G}$ \emph{induced} by the vertices $V \setminus A$, where $E'$ includes those edges in $E$ that are between vertices in $V \setminus A$, and similarly for $U'$.  Consider the set of ancestors of $Y$ in $\mathcal{G}_{V \setminus A}$, denoted as 
\begin{equation}
D := \An(Y, \mathcal{G}_{V \setminus A}).
\end{equation}
The bucket decomposition $D_1, \dots, D_K$ of $D$, induced by the bucket decomposition of $V$, is simply
\begin{equation}
D = \dot{\bigcup}_{k=1}^K D_k, \quad D_k = D \cap B_k, \quad i=1,\dots,K.
\label{eq:small-bucket}
\end{equation}
\begin{lemma} \label{lem:pa-D}
When the criterion in \cref{thm:id-criterion} is satisfied, we have $\Pa(D_k, \g) = \Pa(B_k, \g)$ for every nonempty $D_k$. 
\end{lemma}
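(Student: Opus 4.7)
My plan is to prove the two inclusions $\Pa(B_k,\g) \subseteq \Pa(D_k,\g)$ and $\Pa(D_k,\g) \subseteq \Pa(B_k,\g)$ separately. The first is immediate from the restrictive property (\cref{cor:restrictive}): any $v \in \Pa(B_k)$ is a parent of every vertex in $B_k$, in particular of some $i \in D_k$ (which is nonempty by hypothesis), while $v \notin B_k \supseteq D_k$, so $v \in \Pa(D_k)$. No use of the identification criterion is needed here.

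For the reverse inclusion, fix $v \in \Pa(D_k)$, witnessed by $v \to j$ for some $j \in D_k$ with $v \notin D_k$. It suffices to show $v \notin B_k$. Suppose for contradiction that $v \in B_k$. If $v \notin A$, then prepending $v \to j$ to a directed path $j \to \cdots \to Y$ in $\g_{V \setminus A}$ (which exists because $j \in D = \An(Y, \g_{V \setminus A})$) gives a directed path entirely in $\g_{V \setminus A}$, so $v \in D$; combined with $v \in B_k$ this yields $v \in D_k$, contradicting $v \notin D_k$. The genuinely interesting case is $v \in A$, where I invoke \cref{thm:id-criterion}.

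Since $v, j \in B_k$ lie in the same maximal connected component of $\g_U$, there is an undirected path $\pi\colon v = u_0 - u_1 - \cdots - u_m = j$ lying entirely in $B_k$. Let $j^{*}$ be the largest index with $u_{j^{*}} \in A$; such $j^{*}$ exists (take $j^{*} = 0$) and $j^{*} < m$ because $u_m = j \in D_k \subseteq V \setminus A$. Concatenating the tail $u_{j^{*}} - \cdots - u_m$ of $\pi$ with a directed path $u_m \to \cdots \to Y$ in $\g_{V \setminus A}$ produces a walk from $u_{j^{*}} \in A$ to $Y$ whose edges are either undirected or directed toward $Y$. I then extract a simple path by a standard shortcut argument: if a vertex of the undirected segment coincides with a later vertex on the directed segment, reroute through the latest such occurrence. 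The resulting simple path is proper (only $u_{j^{*}}$ is in $A$, by maximality of $j^{*}$ and because the directed segment lies in $V \setminus A$), possibly causal (no edge points backward), and starts with the undirected edge $u_{j^{*}} - u_{j^{*}+1}$; this contradicts \cref{thm:id-criterion}.

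The main obstacle I anticipate is verifying that the shortcut-extraction procedure preserves the leading undirected edge (and keeps the path proper). The key observation that resolves it is that $u_{j^{*}} \in A$ whereas every other vertex on both segments of the walk lies in $V \setminus A$, so no repeat involves $u_{j^{*}}$ itself and no shortcut can collapse the initial undirected edge or introduce an additional $A$-vertex.
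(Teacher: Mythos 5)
Your first inclusion and your treatment of the case $v \notin A$ are fine (the latter is in fact a slightly more direct route than the paper's, which goes through ``$v \in D_1 \cup \dots \cup D_{k-1}$ or $v \in A$'' and bucket disjointness). The gap is in the case $v \in A \cap B_k$. The criterion in \cref{thm:id-criterion} concerns \emph{possibly causal} paths in the strong sense used throughout the paper: a path $\langle v_1,\dots,v_k\rangle$ is possibly causal only if no edge $v_l \leftarrow v_r$ is in $\g$ for \emph{any} $1 \le l < r \le k$, not merely for consecutive vertices. Your construction (undirected within-bucket segment followed by a directed path to $Y$, then shortcut to a simple path) guarantees properness, the leading undirected edge, and that every edge \emph{on} the path is undirected or points toward $Y$; it does not exclude backward chords between non-consecutive path vertices. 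Such chords genuinely occur in MPDAGs: with edges $u_0 - u_1$, $u_1 - u_2$, $u_2 \to u_0$ (a valid MPDAG, closed under the Meek rules), the path $u_0 - u_1 - u_2$ is edgewise forward yet not possibly causal, and in fact $u_2 \notin \PossDe(u_0)$, so an edgewise-forward path does not even imply the existence of a possibly causal path between the same endpoints. In your construction the dangerous chords are directed edges inside the bucket $B_k$ (present whenever background knowledge orients within-bucket edges), either between two vertices of the undirected segment or from a within-bucket vertex of the directed segment back to an earlier vertex; your shortcut procedure and the choice of a shortest undirected path do not remove them, since a shortest undirected path can still have directed chords. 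Hence the asserted contradiction with \cref{thm:id-criterion} is not established as written.

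This missing step is precisely what the paper outsources: it invokes \cref{lemma:big2} (Lemma D.1(i) of Perkovi\'c, 2020), which says that under the criterion there is no proper possibly causal path from $A$ into any bucket starting with an undirected edge, and derives the contradiction from $v \in A \cap B_k$ together with the nonempty $D_k$. To repair your argument you would either have to cite that lemma (at which point your case analysis essentially collapses into the paper's proof) or reprove its substance, i.e., show how to upgrade your edgewise-forward proper walk into a genuinely possibly causal path; that upgrade is a nontrivial argument about MPDAGs and is not supplied by the ``no edge points backward'' remark.
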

\noindent Proofs of \cref{lemma:anc-bucket21,lem:pa-D} are left to \cref{apx:sec:graph-proofs}.

\begin{theorem} [{\citealp{perkovic20}}] \label{thm:id-nonpara}
Suppose the criterion in \cref{thm:id-criterion} is satisfied for $A,Y$ in MPDAG $\g = (V,E,U)$ such that $Y \notin A$. Let $P(X)$ be the observed distribution. Let $D = \An(Y, \mathcal{G}_{V \setminus A})$ and $D_1, \dots D_K$ be the bucket decomposition of $D$ as in \cref{eq:small-bucket}. Then the interventional distribution $ P(X_Y | \Do(X_{A} = x_A))$  can be identified as 
\begin{equation} \label{eqs:truncated-fact}
P(X_Y | \Do(X_{A} = x_A)) = \bigintsss \left\{ \prod_{k=1}^{K} P \left(X_{D_k} | X_{\Pa(D_k)} \right) \right\} \dd X_{D \setminus Y}
\end{equation}
for values $X_{\Pa(D_k)}$ in agreement with $x_A$, where $P \left(X_{D_k} | X_{\Pa(D_k)}\right) \equiv 1$ if $D_k = \emptyset$.
\end{theorem}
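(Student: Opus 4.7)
The plan is to apply the g-formula (\ref{eqs:g-formula}) to an arbitrary DAG $\mathcal{D} \in [\g]$, regroup its local conditionals by the buckets of $\g$, and marginalize over $V \setminus (Y \cup A)$. The central algebraic fact is that, for any $\mathcal{D} \in [\g]$ and any bucket $B_k \subseteq V \setminus A$, the internal DAG orientation washes out:
$$\prod_{v \in B_k} P(X_v \mid X_{\Pa(v,\mathcal{D})}) \;=\; P(X_{B_k} \mid X_{\Pa(B_k,\g)}),$$
which follows from \cref{cor:restrictive} (all $v \in B_k$ share the same external parents in $\g$) combined with the fact that inter-bucket edges are directed and respect \eqref{eqs:bucket-ordering}. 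Hence each such bucket conditional is a functional of $P(X)$ alone, independent of $\mathcal{D}$.

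The next step is to leverage the identification criterion to locate $A$. I would show that any bucket $B_k$ with $D_k \neq \emptyset$ must satisfy $B_k \cap A = \emptyset$: otherwise, picking $a \in B_k \cap A$ and $v \in D_k$ and concatenating an undirected $a$-to-$v$ path inside $B_k$ with a directed path $v \to \dots \to Y$ in $\g_{V \setminus A}$ yields a proper possibly causal path from $A$ to $Y$ starting with an undirected edge, contradicting \cref{thm:id-criterion}. A parallel concatenation argument gives $\Pa(D_k) \subseteq D \cup A$, so the RHS of (\ref{eqs:truncated-fact}) depends only on $x_A$ and $X_{D \setminus Y}$.

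To conclude, I would split $\int dX_{V \setminus (Y \cup A)} = \int dX_{D \setminus Y}\,\int dX_{V \setminus (D \cup A)}$ and evaluate the inner integral bucket by bucket. A bucket $B_k \subseteq V \setminus A$ with $D_k = \emptyset$ integrates $P(X_{B_k} \mid X_{\Pa(B_k)})$ over $X_{B_k}$ to $1$; a bucket $B_k \subseteq V \setminus A$ with $D_k \neq \emptyset$ reduces to the marginal $P(X_{D_k} \mid X_{\Pa(B_k)}) = P(X_{D_k} \mid X_{\Pa(D_k)})$ via \cref{lem:pa-D}; and a mixed bucket (necessarily with $D_k = \emptyset$ by the previous step) contributes $\prod_{v \in B_k \setminus A} P(X_v \mid X_{\Pa(v,\mathcal{D})})$, which I claim also integrates to $1$ over $X_{B_k \setminus A}$. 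The main obstacle is this last case, since the partial product genuinely depends on how $\mathcal{D}$ orients the undirected edges inside $B_k$, yet the integral must be $1$ uniformly in $\mathcal{D}$. The cleanest resolution is to recognize the partial product as the interventional density $P(X_{B_k \setminus A} \mid \Do(X_{B_k \cap A} = x_{B_k \cap A}), X_{\Pa(B_k)})$ for the local SEM on $B_k$ given its external parents, which trivially integrates to $1$; assembling the three cases yields the desired $\int \prod_k P(X_{D_k} \mid X_{\Pa(D_k)})\, dX_{D \setminus Y}$.
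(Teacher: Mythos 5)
First, a point of reference: the paper never proves \cref{thm:id-nonpara} itself --- it is imported from \citet{perkovic20}, and the only in-paper support consists of \cref{lem:pa-D} and \cref{lemma:big2}. Your overall architecture --- apply the g-formula \cref{eqs:g-formula} to an arbitrary $\mathcal{D}\in[\g]$, collapse each bucket's factors into $P(X_{B_k}\mid X_{\Pa(B_k,\g)})$ using \cref{cor:restrictive}, and then marginalize --- is the right one and is essentially the route taken in the cited source; your resolution of the mixed-bucket case (the partial product over $B_k\setminus A$ is a normalized conditional density over $X_{B_k\setminus A}$ no matter how $\mathcal{D}$ orients the bucket) is also correct.

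The genuine gap is the step ``$D_k\neq\emptyset \Rightarrow B_k\cap A=\emptyset$'', which is precisely where the identification criterion enters, and your concatenation sketch does not establish it. Under the paper's definition, a possibly causal path must contain no edge $v_l\leftarrow v_r$ for \emph{any} pair $l<r$, not just consecutive ones. The undirected $a$--$v$ segment inside $B_k$ can be crossed by background-knowledge directed edges between non-consecutive vertices (the triangle $v_r\to v_l$, $v_l - m$, $m - v_r$ is a legal MPDAG configuration), there can be backward edges from the directed segment into the undirected segment, and the two segments can share vertices; so the concatenation need be neither a path nor possibly causal, and extracting a subsequence can destroy properness and the ``starts with an undirected edge'' property. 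Closing this requires either the graphical machinery of \citet{perkovic20} --- which is exactly what \cref{lemma:big2} packages and what the paper's own proof of \cref{lem:pa-D} invokes --- or a substantially more careful argument than the one you give. Two smaller, fixable omissions: the identity $\prod_{v\in B_k}P(X_v\mid X_{\Pa(v,\mathcal{D})})=P(X_{B_k}\mid X_{\Pa(B_k,\g)})$ needs a short telescoping argument using the ordering \cref{eqs:bucket-ordering}, not \cref{cor:restrictive} alone; and the ``bucket by bucket'' evaluation of the inner integral must be carried out in reverse partial causal order, together with the check that everything you integrate out lies outside $D\cup A$ while every surviving factor $P(X_{D_l}\mid X_{\Pa(D_l)})$ involves only $D_l\cup\Pa(D_l)\subseteq D\cup A$ (your $\Pa(D_k)\subseteq D\cup A$ claim plus \cref{lem:pa-D}), so that no integrated variable reappears in a remaining factor.
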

The expression in \cref{eqs:truncated-fact} above is a generalization of the truncated factorization \cref{eqs:g-formula} from DAGs to MPDAGs. \cref{thm:id-nonpara} holds generally even when an underlying linear SEM is not assumed.

 \section{Block-recursive representation} \label{sec:block}
In this section, we express the observed distribution $P(X)$ induced by a linear SEM compatible with MPDAG $\g = (V,E,U)$ in a block-recursive form. Each block corresponds to a bucket in the bucket decomposition of $V$. Such a reparameterization is necessitated by the fact that the causal ordering of $\mathcal{D}$ is unknown, whereas the buckets can be arranged into a valid partial causal ordering as in \cref{eqs:bucket-ordering}. We will use this representation to compute an estimator for the total causal effect.

Recall that $\mathcal{P}_\mathcal{D}$ denotes the family of laws of $X$ arising from a linear SEM \cref{eqs:sem-vector,eqs:sem-error} compatible with DAG $\g[D]$. Let $\mathcal{P}_{\mathcal{G}} := \cup_{\mathcal{D} \in [\mathcal{G}]} \mathcal{P}_\mathcal{D}$, which denotes the family of laws of $X$ arising from a linear SEM compatible with a DAG in $[\g]$.
\begin{proposition}[Block-recursive form] \label{prop:block-recursive}
Let $\g[D]$ be the causal DAG associated with the linear SEM and $\g$ an MPDAG such that $\g[D] \in [\g]$. Further, let $B_1, \dots, B_K$ be the bucket decomposition of $V$ in $\g$.
Then the linear SEM \cref{eqs:sem-vector,eqs:sem-error} can be rewritten as
\begin{equation*}
X = \Lambda^{\T} X + \varepsilon, 
\end{equation*}
for some matrix of coefficients $\Lambda = (\lambda_{ij}) \in \mathbb{R}^{|V| \times |V|}$ and random vector $\varepsilon = (\varepsilon_i) \in \mathbb{R}^{|V|}$ such that 
\begin{align}
& j \in B_l, \ i \notin \Pa(B_l, \g) \quad \Rightarrow \quad \lambda_{ij} = 0,  \label{eqs:lambda-cons}  \\
& \E \varepsilon = 0,\quad  \E \varepsilon_{B_k} \varepsilon_{B_k}^{\T} \succ \bm{0}, \ (k=1,\dots,K), \quad \varepsilon_{B_1}, \dots, \varepsilon_{B_K} \text{ are mutually independent},  \label{eqs:err-dist-cons}
\end{align}
and 
\begin{equation} \label{eqs:block-error-markov}
\text{law of }\left(\varepsilon_{B_k}\right) \in \mathcal{P}_{\mathcal{G}_{B_k}}, \quad k=1,\dots,K,
\end{equation}
where $\mathcal{G}_{B_k}$ is the subgraph of $\mathcal{G}$ induced by $B_k$. 
\end{proposition}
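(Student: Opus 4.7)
The plan is to rewrite the structural equations by block-wise inverting within each bucket, so that each variable in $B_l$ is expressed only via external parents of $B_l$ and a bucket-level error. Starting from $X = \Gamma^\T X + \epsilon$ with $\Gamma$ respecting $\mathcal{D} \in [\mathcal{G}]$, I first observe that any edge $i \to j$ of $\mathcal{D}$ with $i$, $j$ in distinct buckets must also be a directed edge of $\mathcal{G}$: inter-bucket adjacencies in $\mathcal{G}$ are directed (else the two buckets would merge), and directed edges of $\mathcal{G}$ are preserved in every $\mathcal{D}' \in [\mathcal{G}]$. Hence for each $j \in B_l$, the parents of $j$ in $\mathcal{D}$ lie in $B_l \cup \Pa(B_l, \mathcal{G})$, with the external part contained in $B_1 \cup \cdots \cup B_{l-1}$ by \cref{eqs:bucket-ordering}. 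In block form this says $\Gamma_{B_m, B_l} = 0$ for $m > l$, and for $m < l$ the non-zero rows of $\Gamma_{B_m, B_l}$ are indexed within $\Pa(B_l, \mathcal{G})$.

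\textbf{Construction and routine checks.} The $B_l$-block of the SEM reads
\[
(I - \Gamma_{B_l, B_l}^\T)\, X_{B_l} \;=\; \sum_{m < l} \Gamma_{B_m, B_l}^\T X_{B_m} \,+\, \epsilon_{B_l}.
\]
Since the induced subgraph $\mathcal{D}_{B_l}$ is a DAG, $\Gamma_{B_l, B_l}$ is strictly triangular after a suitable permutation, so $(I - \Gamma_{B_l, B_l}^\T)$ is invertible. I define
\[
\Lambda_{B_l, B_l} := \mathbf{0}, \quad \Lambda_{V \setminus B_l, B_l} := \Gamma_{V \setminus B_l, B_l} \left(I - \Gamma_{B_l, B_l}\right)^{-1}, \quad \varepsilon_{B_l} := \left(I - \Gamma_{B_l, B_l}^\T\right)^{-1} \epsilon_{B_l},
\]
and stacking these across $l$ yields $X = \Lambda^\T X + \varepsilon$. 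Constraint \cref{eqs:lambda-cons} follows because post-multiplication by the invertible $(I - \Gamma_{B_l, B_l})^{-1}$ preserves zero rows of $\Gamma_{V \setminus B_l, B_l}$, and those rows vanish outside $\Pa(B_l, \mathcal{G})$ by the setup. The claims in \cref{eqs:err-dist-cons} are immediate: centering is trivial; positive definiteness of $\E \varepsilon_{B_k} \varepsilon_{B_k}^\T$ follows by congruence with $(I - \Gamma_{B_k, B_k}^\T)^{-1}$ from $\E \epsilon_{B_k} \epsilon_{B_k}^\T \succ \mathbf{0}$; and mutual independence across buckets is inherited from that of the coordinates of $\epsilon$, since each $\varepsilon_{B_k}$ is a measurable function of $\epsilon_{B_k}$ alone.

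\textbf{Main obstacle.} The only non-routine step is \cref{eqs:block-error-markov}. By construction, $\varepsilon_{B_k} = \Gamma_{B_k, B_k}^\T \varepsilon_{B_k} + \epsilon_{B_k}$ is a linear SEM with independent centered errors and coefficient support in $\mathcal{D}_{B_k}$, so the law of $\varepsilon_{B_k}$ lies in $\mathcal{P}_{\mathcal{D}_{B_k}}$. It then suffices to show $\mathcal{D}_{B_k} \in [\mathcal{G}_{B_k}]$. I would verify this by checking three things: $\mathcal{D}_{B_k}$ and $\mathcal{G}_{B_k}$ share their skeleton (both are vertex-induced on $B_k$ from graphs with the same skeleton); directed edges of $\mathcal{G}_{B_k}$ are preserved in $\mathcal{D}_{B_k}$ (inherited from $\mathcal{D} \in [\mathcal{G}]$); and they share the same unshielded colliders (any unshielded collider on $B_k$-vertices in $\mathcal{D}$ already appears in $\mathcal{G}$, hence in $\mathcal{G}_{B_k}$, and conversely). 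This descent of the restricted Markov-equivalence structure through vertex restriction—making sure no Meek orientation rule is violated in the subgraph given the chordal plus background-knowledge structure inside $B_k$—is the part I expect to need the most care.
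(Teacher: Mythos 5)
Your proposal is correct and follows essentially the same route as the paper: the same block-wise inversion $\varepsilon_{B_k} = (I-\Gamma_{B_k,B_k}^{\T})^{-1}\epsilon_{B_k}$, $\Lambda_{\Pa(B_k,\g),B_k} = \Gamma_{\Pa(B_k,\g),B_k}(I-\Gamma_{B_k,B_k})^{-1}$, the same observation that $\mathcal{D}$-parents of a bucket vertex lying outside the bucket must be directed $\mathcal{G}$-parents (hence in $\Pa(B_k,\g)$), and the same verification of \cref{eqs:err-dist-cons}. The step you flag as delicate, namely $\mathcal{D}_{B_k}\in[\mathcal{G}_{B_k}]$ so that the law of $\varepsilon_{B_k}$ lands in $\mathcal{P}_{\mathcal{G}_{B_k}}$, is exactly what the paper asserts in a single clause (``$\mathcal{P}_{\mathcal{D}_{B_k}}\subset\mathcal{P}_{\mathcal{G}_{B_k}}$ since $\mathcal{D}\in[\mathcal{G}]$''), and your three-point check (shared skeleton, preserved directed edges, shared unshielded colliders, all stable under taking the induced subgraph on $B_k$) is precisely the right way to substantiate it.
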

\noindent Note that in contrast to symbol $\epsilon$ used in \cref{eqs:sem-vector,eqs:sem-error}, symbol $\varepsilon$ is used here to denote the errors in the block-recursive form. The coordinates within each $\varepsilon_{B_k}$ may be \emph{dependent}.
\begin{proof}
For $k=2,\dots,K$, by \cref{eqs:sem-vector} and the restrictive property (\cref{cor:restrictive}), we have
\[ X_{B_k} = \Gamma^{\T}_{\Pa(B_k), B_k} X_{\Pa(B_k)} + \Gamma_{B_k}^{\T} X_{B_k} + \epsilon_{B_k},  \]
where $\Pa(B_k) = \Pa(B_k, \g)$. The expression can be rewritten as 
\begin{equation*}
\begin{split}
X_{B_k} &= \left(I - \Gamma_{B_k}\right)^{-\T} \Gamma^{\T}_{\Pa(B_k),B_k} X_{\Pa(B_k)} + \left(I - \Gamma_{B_k}\right)^{-\T} \epsilon_{B_k} \\
&= \Lambda_{\Pa(B_k), B_k}^{\T} X_{\Pa(B_k)} + \varepsilon_{B_k},
\end{split}
\end{equation*}
where $\varepsilon_{B_k} := \left(I - \Gamma_{B_k}\right)^{-\T} \epsilon_{B_k}$ for $k=1,\dots,K$ (note that $X_{B_1} = \varepsilon_{B_1}$). Additionally, $\Lambda_{\Pa(B_k), B_k} = \Gamma_{\Pa(B_k),B_k} \left(I - \Gamma_{B_k} \right)^{-1}$ for $k=2,\dots,K$. 

Matrix $\Lambda \in \mathbb{R}^{|V| \times |V|}$ in the statement of the proposition is defined by blocks $\Lambda_{\Pa(B_k), B_k} $ for $k=2,\dots,K$ and zero entries otherwise. Therefore, $\lambda_{ij} = 0$ if $j \in B_l$ and $i \notin \Pa(B_l)$ for some $l = 1, \dots K$. Hence, by putting the blocks together, the model can be written as $X = \Lambda^{\T} X + \varepsilon$. 

The ``new'' errors $\varepsilon$ satisfy
\[ \varepsilon_{B_k} = \Gamma_{B_k}^{\T} \varepsilon_{B_k} + \epsilon_{B_k}, \quad k=1,\dots,K. \]
It then follows from \cref{eqs:sem-vector,eqs:sem-error} that for every $k$, 
\[ \text{law of } \varepsilon_{B_k} \in \mathcal{P}_{\mathcal{D}_{B_k}} \subset \mathcal{P}_{\mathcal{G}_{B_k}},\]
since $D \in [\mathcal{G}]$. Moreover, for every $k$,
\[ \E \varepsilon_{B_k} = 0, \quad \E \varepsilon_{B_k} \varepsilon_{B_k}^{\T} = (I - \Gamma_{B_k})^{-\T} \E \epsilon_{B_k}\epsilon_{B_k}^{\T} (I - \Gamma_{B_k})^{-1} \succ \bm{0}, \]
where both $(I - \Gamma_{B_k})$ and $\E \epsilon_{B_k}\epsilon_{B_k}^{\T}$ are full rank, because $\Gamma_{B_k}$ can be permuted into an upper-triangular matrix and $\E \epsilon \epsilon^{\T} \succ \bm{0}$ by \cref{eqs:sem-error}.
\end{proof}

\begin{corollary} \label{cor:block-recursive}
Under the same conditions as \cref{prop:block-recursive}, it holds that
\begin{equation} \label{eqs:block-rec}
\begin{split}
X_{B_1} &= \varepsilon_{B_1}, \\
X_{B_k} &= \Lambda^{\T}_{\Pa(B_k), B_k} X_{\Pa(B_k)} + \varepsilon_{B_k}, \quad \varepsilon_{B_k} \indep X_{\Pa(B_k)}, \quad k=2,\dots,K,
\end{split}
\end{equation}
where $\Pa(B_k) = \Pa(B_k, \g)$.
\end{corollary}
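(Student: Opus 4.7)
The plan is to read the two displayed identities directly off the block-recursive equation $X = \Lambda^{\T} X + \varepsilon$ established in \cref{prop:block-recursive}, and then obtain the independence statement as a consequence of the partial causal ordering plus the mutual independence of the $\varepsilon_{B_k}$'s from \cref{eqs:err-dist-cons}.

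First I would unpack \cref{eqs:lambda-cons}. The sparsity constraint says $\lambda_{ij} = 0$ whenever $j \in B_l$ and $i \notin \Pa(B_l, \g)$. Since $\Pa(B_l,\g) \subseteq B_{[l-1]}$ by the partial causal ordering \cref{eqs:bucket-ordering} and \cref{cor:restrictive}, the rows of $\Lambda$ indexed by $B_l$ have support only in $\Pa(B_l)$, and so extracting the $B_l$-coordinates of $X = \Lambda^{\T} X + \varepsilon$ yields exactly
\begin{equation*}
X_{B_l} = \Lambda^{\T}_{\Pa(B_l), B_l} X_{\Pa(B_l)} + \varepsilon_{B_l},\quad l=1,\dots,K.
\end{equation*}
For $l=1$, the first bucket has no external parents (no earlier bucket exists in the partial causal ordering), so $\Pa(B_1,\g) = \emptyset$ and the equation reduces to $X_{B_1} = \varepsilon_{B_1}$.

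Next I would establish the independence $\varepsilon_{B_k} \indep X_{\Pa(B_k)}$ for $k \geq 2$ by induction on $k$. The inductive claim is that, for every $l < k$, the random vector $X_{B_l}$ is a deterministic (in fact linear) function of $\varepsilon_{B_1}, \dots, \varepsilon_{B_l}$. The base case $l=1$ is immediate from $X_{B_1}=\varepsilon_{B_1}$. Assuming the claim for all indices strictly less than $l$, the identity $X_{B_l} = \Lambda^{\T}_{\Pa(B_l), B_l} X_{\Pa(B_l)} + \varepsilon_{B_l}$ together with $\Pa(B_l) \subseteq B_{[l-1]}$ gives the claim at index $l$. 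Applying this with $l$ ranging over the buckets that intersect $\Pa(B_k)$ shows that $X_{\Pa(B_k)}$ is a function of $\varepsilon_{B_1},\dots,\varepsilon_{B_{k-1}}$, which by \cref{eqs:err-dist-cons} are mutually independent of $\varepsilon_{B_k}$. Hence $\varepsilon_{B_k} \indep X_{\Pa(B_k)}$.

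I do not anticipate any serious obstacle: once the sparsity pattern of $\Lambda$ is combined with $\Pa(B_k)\subseteq B_{[k-1]}$, the block equations are immediate, and the independence is a triangular-system argument. The only step that warrants attention is verifying that $\Pa(B_k)\subseteq B_{[k-1]}$, which is already guaranteed by the ordering convention \cref{eqs:bucket-ordering} and noted in the paragraph preceding \cref{lemma:anc-bucket21}, so no new graphical reasoning is needed here.
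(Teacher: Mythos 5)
Your proof is correct and takes essentially the route the paper treats as immediate: the block equations are read off from the sparsity pattern \cref{eqs:lambda-cons} of $\Lambda$ together with $\Pa(B_k,\g)\subseteq B_{[k-1]}$ (with $\Pa(B_1,\g)=\emptyset$), and the independence $\varepsilon_{B_k}\indep X_{\Pa(B_k)}$ follows from the triangular structure, which makes $X_{\Pa(B_k)}$ a function of $\varepsilon_{B_1},\dots,\varepsilon_{B_{k-1}}$, combined with the mutual independence in \cref{eqs:err-dist-cons}. The only cosmetic slip is the indexing remark: it is the \emph{columns} of $\Lambda$ (equivalently, the rows of $\Lambda^{\T}$) indexed by $B_l$ that are supported on $\Pa(B_l,\g)$, not the rows of $\Lambda$; the displayed block equation you extract is nonetheless the correct one.
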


Next, we show that if the total causal effect $\tau_{AY}$ is identifiable from MPDAG $\g$ (Theorem \ref{thm:id-criterion}), then it can be calculated from $\Lambda$ in the block-recursive representation of Proposition \ref{prop:block-recursive}. Therefore, the distribution of $\varepsilon$ is a nuisance relative to estimating $\tau_{AY}$. 
\begin{proposition} \label{prop:id}
Suppose the criterion in \cref{thm:id-criterion} is satisfied for $A,Y$ in MPDAG $\g = (V,E,U)$ such that $Y \notin A$. Let $\Lambda$ be the block-recursive coefficient matrix given by \cref{prop:block-recursive}. The total causal effect of $X_A$ on $X_Y$ is identified as
\begin{equation} \label{eqs:effect-id}
\tau_{AY} = \Lambda_{A, D} \left[(I - \Lambda_{D,D})^{-1} \right]_{D,Y},
\end{equation}
where $D = \An(Y, \g_{V \setminus A})$ and the last subscript denotes the column corresponding to $Y \in D$. 
\end{proposition}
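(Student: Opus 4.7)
The plan is to combine the non-parametric identification formula of \cref{thm:id-nonpara} with the linearity baked into the block-recursive parameterization of \cref{cor:block-recursive}, compute $\E[X_Y \mid \Do(X_A = x_A)]$ as an explicit linear function of $x_A$, and then read off $\tau_{AY}$ by differentiation.

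First I would invoke \cref{thm:id-nonpara} to describe $P(X_Y \mid \Do(X_A = x_A))$ as the $X_{D \setminus Y}$-marginal of $\prod_{k=1}^K P(X_{D_k} \mid X_{\Pa(D_k)})$, with the parent coordinates belonging to $A$ fixed at $x_A$, and view this as an interventional joint law on $X_D$ under which each block $X_{D_k}$ is drawn from its observational conditional. By \cref{lem:pa-D}, $\Pa(D_k, \g) = \Pa(B_k, \g)$ for every non-empty $D_k$, so \cref{cor:block-recursive} together with independence of $\varepsilon_{B_k}$ from $X_{\Pa(B_k)}$ gives the observational linearity
\[\E\bigl[X_{D_k} \bigm| X_{\Pa(D_k)}\bigr] = \Lambda^{\T}_{\Pa(B_k), D_k}\, X_{\Pa(B_k)}.\]
Setting $m_i := \E[X_i \mid \Do(X_A = x_A)]$ for $i \in D$ and stacking into $m_D$, the tower property applied bucket-by-bucket along the partial causal ordering would yield
\[m_{D_k} = \Lambda^{\T}_{\Pa(B_k) \cap D, D_k}\, m_{\Pa(B_k) \cap D} + \Lambda^{\T}_{\Pa(B_k) \cap A, D_k}\, x_{\Pa(B_k) \cap A},\]
because every parent of $D_k$ is either in $A$ (where it is fixed to $x_A$) or, by the definition $D = \An(Y, \g_{V \setminus A})$, already in $D_{[k-1]}$ (where its interventional mean has been computed). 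The sparsity constraint \cref{eqs:lambda-cons} kills all other $\Lambda$-entries, so padding the index sets up to $D$ and $A$ collapses these identities across $k = 1, \dots, K$ into the single matrix equation
\[m_D = \Lambda^{\T}_{D,D}\, m_D + \Lambda^{\T}_{A,D}\, x_A.\]
When $D$ is ordered by the bucket ordering \cref{eqs:bucket-ordering}, $\Lambda_{D,D}$ is block strictly upper triangular and hence nilpotent, so $I - \Lambda_{D,D}$ is invertible. Solving yields $m_D = (I - \Lambda^{\T}_{D,D})^{-1} \Lambda^{\T}_{A,D}\, x_A$, and extracting the $Y$-coordinate and differentiating in $x_A$ produces $\tau_{AY} = \Lambda_{A,D}\,[(I - \Lambda_{D,D})^{-1}]_{D,Y}$.

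The main obstacle is the bookkeeping that lets the bucket-by-bucket recursion close inside $D \cup A$ and assemble into one clean matrix identity without spurious cross-terms. Three ingredients drive this: \cref{lem:pa-D} supplies $\Pa(D_k) = \Pa(B_k)$ under the identifiability criterion; the fact that any parent of a vertex in $D$ that does not lie in $A$ is itself in $D$ follows from $D$ being the ancestor set of $Y$ in $\g_{V \setminus A}$; and the partial causal bucket ordering places such parents strictly earlier, simultaneously providing the triangular structure needed for $I - \Lambda_{D,D}$ to be invertible. Once these are in place, the remainder is linear algebra on a nilpotent coefficient matrix.
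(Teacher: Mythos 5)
Your proposal is correct and follows essentially the same route as the paper's proof: invoke \cref{thm:id-nonpara}, use \cref{lem:pa-D} together with the block-recursive linearity of \cref{cor:block-recursive} to turn each bucket factor into a linear relation, assemble these into $m_D = \Lambda_{D,D}^{\T} m_D + \Lambda_{A,D}^{\T} x_A$, solve, and differentiate. The only differences are cosmetic: you propagate interventional means via the tower property where the paper writes the distributional equation $X_D = \Lambda_{D,D}^{\T}X_D + \Lambda_{A,D}^{\T}x_A + \varepsilon_D$ and takes expectations at the end, and you explicitly justify invertibility of $I - \Lambda_{D,D}$ via block strict triangularity (nilpotency), a detail the paper leaves implicit.
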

\begin{proof}
We derive this result using \cref{thm:id-nonpara}. Recall that $D_1, \dots, D_K$ is a partition of $D$ induced by the bucket decomposition $B_1, \dots, B_K$ of $V$ in the sense that $D_k = D \cap B_k$ for $k=1,\dots,K$. When $D_k = \emptyset$, we use the convention that $P(X_{D_k} | X_{\Pa(D_k)}) \equiv 1$. By definition of $D = \An(Y, \mathcal{G}_{V \setminus A})$ and \cref{eqs:bucket-ordering}, observe that a vertex in $\Pa(D_k) = \Pa(D_k, \g)$ is either in $D_1 \cup \dots \cup D_{k-1}$ or in $A$. Let $F_k := A \cap \Pa(D_k)$. In \cref{eqs:truncated-fact}, we note that the joint interventional distribution of $X_D$ is given by
\begin{equation*}
P(X_{D} | \Do(X_A =x_A)) = \prod_{k=1}^{K} P(X_{D_k} | X_{\Pa(D_k)}) = \prod_{k=1}^{K} P(X_{D_k} | X_{\Pa(D_k) \setminus F_k}, X_{F_k} = x_{F_k}),
\end{equation*}
where $x_{F_k}$ is fixed by the $\text{do}(X_A = x_A)$ operation. Further, fix a factor $i \in \{1,\dots,K\}$. By \cref{lem:pa-D}, $\Pa(D_i) = \Pa(B_i)$. By \cref{eqs:block-rec} and $\varepsilon_{D_i} \indep X_{\Pa(B_i)}$, we have 
\begin{equation*}
\begin{split}
X_{D_i} \mid \left\{ X_{\Pa(D_i) \setminus F_i}, X_{F_i} = x_{F_i}\right\} &=_{d} \Lambda_{\Pa(D_i) \setminus F_i, D_i}^{\T} X_{\Pa(D_i) \setminus F_i} + \Lambda_{F_i, D_i} x_{F_i} + \varepsilon_{D_i} \\
&= \Lambda_{\Pa(D_i) \cap D, D_i}^{\T} X_{\Pa(D_i) \cap D} + \Lambda_{\Pa(D_i) \cap A, D_i} x_{\Pa(D_i) \cap A} + \varepsilon_{D_i}
\end{split}
\end{equation*}
The fact that the display above holds for every $i=1,\dots,K$ implies that the joint interventional distribution $P(X_{D} | \Do(X_A =x_A)) $ satisfies
\begin{equation*}
X_{D} = \Lambda_{D,D}^{T} X_{D} + \Lambda_{A,D}^{\T} x_A + \varepsilon_{D}.
\end{equation*}
It follows that $X_D = (I - \Lambda_{D,D})^{-\T} (\Lambda_{A,D}^{\T} x_A + \varepsilon_{D})$ and hence 
\begin{equation*}
\E[X_D \mid \text{do}(X_{A} = x_A)] = (I - \Lambda_{D,D})^{-\T} \Lambda_{A,D}^{\T} x_{A}.
\end{equation*}
Since $Y \in D$, by \cref{def:total-effect} we have
\begin{equation*}
\tau_{AY} = \frac{\partial}{\partial x_A} \E[X_Y \mid \text{do}(X_{A} = x_A)] = \Lambda_{A,D} \left[(I - \Lambda_{D,D})^{-1} \right]_{D,Y}.
\end{equation*} 
\end{proof}

We say vertex $j$ is a possible descendant of $i$, denoted as $j \in \PossDe(i)$, if there exists a possibly causal path from $i$ to $j$. For a set of vertices $A$, define $\PossDe(A) := \cup_{i \in A} \PossDe(i)$. See \cref{apx:sec:graph} for more details.
\begin{corollary}
If $Y \notin \PossDe(A)$, then $\tau_{AY} = 0$.
\end{corollary}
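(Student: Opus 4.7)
The plan is to invoke Proposition~4.4 (the identification formula for $\tau_{AY}$ in terms of $\Lambda$) and show that under the hypothesis $Y \notin \PossDe(A)$, the whole block $\Lambda_{A,D}$ vanishes entrywise; then the matrix product in \cref{eqs:effect-id} is trivially zero.

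First I would check that Proposition~4.4 actually applies. Since $Y \notin \PossDe(A)$ means there is \emph{no} possibly causal path from $A$ to $Y$ at all in $\g$, a fortiori there is no proper, possibly causal path from $A$ to $Y$ starting with an undirected edge, so the criterion of \cref{thm:id-criterion} is vacuously satisfied. Hence $\tau_{AY} = \Lambda_{A,D}\bigl[(I - \Lambda_{D,D})^{-1}\bigr]_{D,Y}$ with $D = \An(Y, \g_{V \setminus A})$.

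Next, fix an arbitrary $a \in A$ and $d \in D$; let $B_k$ denote the bucket containing $d$. I would argue $\lambda_{ad} = 0$ as follows. By the sparsity constraint \cref{eqs:lambda-cons} in Proposition~4.1, if $\lambda_{ad} \neq 0$ then $a \in \Pa(B_k)$. By the restrictive property (\cref{cor:restrictive}), $\Pa(B_k) = \Pa(d) \setminus B_k$, so $a \to d$ is an edge of $\g$ with $a \notin B_k$. On the other hand, $d \in \An(Y, \g_{V \setminus A})$ supplies a directed path $d \to \cdots \to Y$ in $\g$ that avoids $A$. Concatenating gives a directed (hence possibly causal) path $a \to d \to \cdots \to Y$ in $\g$ from a vertex of $A$ to $Y$, contradicting $Y \notin \PossDe(A)$. (The degenerate case $d = Y$ just collapses this to the single edge $a \to Y$, which contradicts the hypothesis equally well.) Therefore $\Lambda_{A,D} = \bm{0}$ and $\tau_{AY} = 0$.

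The argument is essentially a one-liner once the infrastructure of \cref{prop:block-recursive,prop:id,cor:restrictive} is in place; there is no genuine obstacle. The only point requiring a bit of care is to use the restrictive property to pass from the bucket-level statement $a \in \Pa(B_k)$ to the vertex-level statement $a \to d$, so that the hypothesis on possibly causal paths can be invoked directly.
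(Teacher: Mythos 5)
Your proposal is correct and takes essentially the same route as the paper: the paper's own proof is exactly the observation that $D = \An(Y, \g_{V \setminus A})$ together with $Y \notin \PossDe(A)$ forces $\Lambda_{A,D} = \bm{0}$ in \cref{eqs:effect-id}. You simply make explicit the details the paper leaves implicit, namely that the identification criterion is vacuously satisfied and that the restrictive property (\cref{cor:restrictive}) turns $a \in \Pa(B_k)$ into an edge $a \to d$ that, concatenated with a directed path from $d$ to $Y$ avoiding $A$, would contradict $Y \notin \PossDe(A)$.
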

\begin{proof}
Since $D = \An(Y, \mathcal{G}_{V \setminus A})$ and $Y \notin \PossDe(A)$, $\Lambda_{A,D} = \bm{0}$. 
\end{proof}

 \section{Recursive least squares} \label{sec:MLE}
Consider the \emph{special case} when the errors in the linear SEM \cref{eqs:sem} are jointly Gaussian. In this case, by the standard maximum likelihood theory, the Cram\'er--Rao bound is achieved by the maximum likelihood estimator (MLE) of the total causal effect, which can be obtained by plugging in the MLE for $\Lambda$ in the block-recursive form (\cref{prop:block-recursive}) into the formula \cref{eqs:effect-id}. We now compute the MLE for $\Lambda$ given an MPDAG $\g$.

When $\epsilon$ is multivariate Gaussian, the block-recursive form in \cref{prop:block-recursive} is a linear Gaussian model parameterized by $\{(\Lambda_k)_{k=2}^{K}, (\Omega_k)_{k=1}^K\}$, where $\Lambda_k := \Lambda_{\Pa(B_k), B_k}$ and $\Omega_k$ is the covariance for $\varepsilon_{B_k}$. Because $\varepsilon$ are independent between blocks (Proposition \ref{prop:block-recursive}), the likelihood factorizes as
\begin{equation} \label{eqs:gaussian-likelihood}
\mathcal{L}((\Lambda_k)_k,(\Omega_k)_k) = \prod_{k=1}^{K} \N\left(X_{B_k} - \Lambda^{\T}_{k} X_{\Pa(B_k)}; \bm{0}, \Omega_k \right).
\end{equation}

Denote the MLE of $\Lambda$ by $\hat{\Lambda}^{\mathcal{G}}$, which consists of blocks $(\hat{\Lambda}_{k}^{\mathcal{G}})_{k=2}^{K}$ and zero values elsewhere, and the MLE of $\Omega$ by $\hat{\Omega}^{\mathcal{G}} = (\hat{\Omega}_{k}^{\mathcal{G}})_{k=1}^{K}$. The superscripts highlight the dependence on MPDAG $\mathcal{G}$. The MLE maximizes $\mathcal{L}((\Lambda_k)_k,(\Omega_k)_k)$ subject to \cref{eqs:block-error-markov}, namely
 \begin{equation*}
\N(\bm{0}, \Omega_k) \in \mathcal{P}_{\mathcal{G}_{B_k}}, \quad k=1,\dots,K,
\end{equation*}
where $\mathcal{G}_{B_k}$ is the subgraph of $\mathcal{G}$ induced by $B_k$. 
This further translates to a set of algebraic constraints on $(\Omega_k)_{k=1}^{K}$, namely for $k=1,\dots,K$,
\begin{equation} \label{eqs:algebraic-cons}
\det \left[ (\Omega_k)_{\{i\} \cup C, \{j\} \cup C} \right] = 0, \ \text{if } \text{$i$ and $j$ are d-separated by $C$ in $\mathcal{G}_{B_k}$};
\end{equation}
see, e.g., \citet[\S 3.1]{drton2008lectures}.
Although the constraints \cref{eqs:algebraic-cons} may seem daunting, we will show that they do not affect the MLE for $\Lambda$.

Let the sample covariance matrix   be computed with respect to mean zero, i.e.,
\begin{equation} \label{eqs:sample-cov}
\hat{\Sigma}^{(n)} := \frac{1}{n} \sum_{i=1}^n X^{(i)} X^{(i) \T},
\end{equation}
where $n$ is the sample size, and the superscripts are reserved to index samples. To reduce clutter, for a set of indices $C$, we often abbreviate $\Sigma_{C,C}$ as $\Sigma_{C}$.

\begin{lemma} \label{lem:MLE-G-regression} 
Suppose $X^{(i)}: i=1,\dots,n$ is generated iid from a linear SEM \cref{eqs:sem-vector,eqs:sem-error} associated with an unknown causal DAG $\mathcal{D}$. Suppose the error $\epsilon$ is distributed as multivariate Gaussian. Suppose $\mathcal{D} \in [\mathcal{G}]$ for a known MPDAG $\mathcal{G}$. Let $\hat{\Sigma}^{(n)}$ be the sample covariance as defined in \cref{eqs:sample-cov}. The MLE for $\Lambda_k = \Lambda_{\Pa(B_k), B_k}$ in the block-recursive form is given by
\begin{equation} \label{eqs:G-reg}
\hat{\Lambda}_{k}^{\mathcal{G}} = \left(\hat{\Sigma}^{(n)}_{\Pa(B_k)} \right)^{-1} \hat{\Sigma}^{(n)}_{\Pa(B_k), B_k}, \quad k=2,\dots,K.
\end{equation}
\end{lemma}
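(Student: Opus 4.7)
The strategy is to exploit the bucket-wise factorization of the Gaussian likelihood in \eqref{eqs:gaussian-likelihood}. Since the parameters appearing in different factors are variation-independent --- each $\Omega_k$ is constrained only by the algebraic relations \eqref{eqs:algebraic-cons} for the subgraph $\g_{B_k}$, and $\Lambda_k$ appears only in the $k$-th factor --- maximizing the joint log-likelihood over $\big((\Lambda_k)_{k=2}^K,(\Omega_k)_{k=1}^K\big)$ decouples into $K$ independent subproblems, one per bucket. It thus suffices to fix $k \in \{2,\dots,K\}$ and show that the MLE of $\Lambda_k$ from the single bucket factor is the OLS expression on the right of \eqref{eqs:G-reg}.

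For fixed $k$, write $y^{(i)} = X^{(i)}_{B_k}$ and $z^{(i)} = X^{(i)}_{\Pa(B_k)}$. Up to an additive constant the bucket log-likelihood is
\[
\ell_k(\Lambda_k,\Omega_k) = -\tfrac{n}{2}\log\det \Omega_k - \tfrac{1}{2}\sum_{i=1}^n \big(y^{(i)} - \Lambda_k^{\T} z^{(i)}\big)^{\T} \Omega_k^{-1} \big(y^{(i)} - \Lambda_k^{\T} z^{(i)}\big).
\]
Differentiating with respect to $\Lambda_k$ gives the score equation
\[
\Big(\hat\Sigma^{(n)}_{\Pa(B_k),B_k} - \hat\Sigma^{(n)}_{\Pa(B_k)}\,\Lambda_k\Big)\,\Omega_k^{-1} \;=\; 0.
\]
Because $\Omega_k \succ \bm{0}$, the factor $\Omega_k^{-1}$ cancels, and (on the almost-sure event $\hat\Sigma^{(n)}_{\Pa(B_k)} \succ \bm{0}$) the unique stationary point in $\Lambda_k$ is precisely the formula in \eqref{eqs:G-reg}, \emph{independently of the value of $\Omega_k$}. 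This is the familiar fact that GLS coincides with OLS for multivariate Gaussian regression when every response is regressed on the same covariates without cross-equation restrictions on the mean parameters.

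Finally, I would argue that the constraints \eqref{eqs:algebraic-cons} on $\Omega_k$ cause no trouble by a profiling argument: for every feasible $\Omega_k$, $\sup_{\Lambda_k} \ell_k(\Lambda_k,\Omega_k) = \ell_k(\hat\Lambda_k^{\g},\Omega_k)$ with the same maximizer in $\Lambda_k$, so profiling $\Omega_k$ over the constrained set affects the MLE of $\Omega_k$ only, not that of $\Lambda_k$. The main thing to watch is the structural prerequisite for the GLS/OLS equivalence: each scalar regression of $X_j$ (for $j \in B_k$) must share the \emph{same} regressor set $X_{\Pa(B_k)}$, so that $\Lambda_k$ carries no cross-equation restrictions. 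This is exactly the content of \cref{cor:restrictive}, which guarantees $\Pa(j)\setminus B_k = \Pa(B_k)$ for every $j \in B_k$; absent this property, the multivariate regression would have zero restrictions on $\Lambda_k$ and the elegant cancellation above would fail. With \cref{cor:restrictive} in hand, the argument is essentially routine.
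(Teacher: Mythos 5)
Your proposal is correct and follows essentially the same route as the paper: factorize the Gaussian likelihood bucket-by-bucket, differentiate in $\Lambda_k$, and observe that the score equation's $\Omega_k^{-1}$ factor cancels for any positive definite $\Omega_k$ satisfying \cref{eqs:algebraic-cons}, so the constraints on $\Omega_k$ never touch the maximizer in $\Lambda_k$. Your closing observation about the role of \cref{cor:restrictive} (no cross-equation restrictions on $\Lambda_k$) matches the remark the paper makes immediately after its proof.
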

\begin{proof}
By factorization in \cref{eqs:gaussian-likelihood}, MLE $(\hat{\Lambda}_{k}^{\mathcal{G}}, \hat{\Omega}_{k}^{\mathcal{G}})$ is the maximizer of log-likelihood
\begin{equation*}
\begin{split}
& \quad \ell_n(\Lambda_k, \Omega_k) \\
& =  -\frac{1}{2} \sum_{i=1}^{n} \left(X_{B_k}^{(i)} - \Lambda_k^{\T} X_{\Pa(B_k)}^{(i)}\right)^{\T} \Omega_{k}^{-1} \left(X_{B_k}^{(i)} - \Lambda_k^{\T} X_{\Pa(B_k)}^{(i)}\right) - \frac{n}{2} \log \det(\Omega_k) \\
&= -\frac{1}{2} \Tr\left(\sum_{i=1}^{n} \Omega_{k}^{-1} (X_{B_k}^{(i)} - \Lambda_k^{\T} X_{\Pa(B_k)}^{(i)}) (X_{B_k}^{(i)} - \Lambda_k^{\T} X_{\Pa(B_k)}^{(i)})^{\T} \right) - \frac{n}{2} \log \det(\Omega_k),
\end{split}
\end{equation*}
subject to \cref{eqs:algebraic-cons}. Taking a derivative with respect to $\Lambda_k \in \mathbb{R}^{|\Pa(B_k)| \times |B_k|}$, we have
\begin{equation*}
\frac{\partial \ell_n(\Lambda_k, \Omega_k)}{\partial \Lambda_k} = -2 \sum_{i=1}^{n} X_{\Pa(B_k)}^{(i)} X_{B_k}^{(i)\T} \Omega_k^{-1} + 2 \sum_{i=1}^{n} X_{\Pa(B_k)}^{(i)} X_{\Pa(B_k)}^{(i) \T} \Lambda_k \Omega_k^{-1}.
\end{equation*}
For any positive definite $\Omega_k$ satisfying \cref{eqs:algebraic-cons}, setting the derivative ${\ell_n(\Lambda_k, \Omega_k)}/{\partial \Lambda_k}$ to zero yields the estimate
\begin{equation*}
\hat{\Lambda}_{k}^{\mathcal{G}} = \left(\frac{1}{n}\sum_{i=1}^{n} X_{\Pa(B_k)}^{(i)} X_{\Pa(B_k)}^{(i) \T}\right)^{-1} \left(\frac{1}{n}\sum_{i=1}^{n} X_{\Pa(B_k)}^{(i)} X_{B_k}^{(i)\T}\right) =\left(\hat{\Sigma}_{\Pa(B_k)}^{(n)} \right)^{-1} \hat{\Sigma}_{\Pa(B_k), B_k}^{(n)}.
\end{equation*}
\end{proof}
\begin{remark}
Because of the restrictive property (\cref{cor:restrictive}), each $\hat{\Lambda}_{k}^{\mathcal{G}}$ is computed by optimizing over the space of $|\Pa(B_k)| \times |B_k|$ matrices and the resulting MLE takes the simple form as above; see also \citet[\S 5]{anderson1985maximum} and \citet[\S 6.4]{amemiya1985advanced} for earlier discussions of this phenomenon. 

However, such a simple form is unavailable in general, when the zero constraints on $\Lambda$ do not obey the restrictive property, even if we ignore the algebraic constraints \cref{eqs:algebraic-cons} on $\Omega$. In fact, the likelihood function can be multimodal; see also \citet{drton2004multimodality,drton2006computing,drton09a} on seemingly unrelated regressions.
\end{remark}

Since $\hat{\Lambda}^{\mathcal{G}}$ is obtained by simply regressing each $B_i$ onto $\Pa(B_i, \g)$ using ordinary least squares, we call this specific recursive least squares \emph{$\mathcal{G}$-regression}. The resulting MLE for an identified total causal effect is a plugin estimator using the formula in \cref{prop:id}.

\begin{definition}[$\mathcal{G}$-regression estimator] \label{def:G-reg-plugin}
Suppose $X^{(i)}: i=1,\dots,n$ is generated iid from a linear SEM \cref{eqs:sem-vector,eqs:sem-error} associated with an unknown causal DAG $\mathcal{D}$. Suppose $\mathcal{D} \in [\mathcal{G}]$ for a known MPDAG $\mathcal{G}$. Further, suppose for $A \subset V$, $Y \in V \setminus A$, $\tau_{AY}$ is identified under the criterion of \cref{thm:id-criterion}. 
The $\mathcal{G}$-regression estimator for the total causal effect $\tau_{AY}$ is defined as
\begin{equation}
\hat{\tau}_{AY}^{\mathcal{G}} = \hat{\Lambda}_{A, D}^{{\mathcal{G}}} \left[(I - \hat{\Lambda}_{D,D}^{{\mathcal{G}}})^{-1} \right]_{D,Y},
\end{equation}
where $\hat{\Lambda}^{\mathcal{G}}$ is given by \cref{eqs:G-reg}.
\end{definition}
 \section{Efficiency theory} \label{sec:efficiency}
In this section, we establish the asymptotic efficiency of our $\mathcal{G}$-regression estimator, when the errors in the generating linear SEM are \emph{not} necessarily Gaussian, among a reasonably large class of estimators---all regular estimators that only depend on the sample covariance. This class of estimators, despite not covering all the estimators considered in the standard semiparametric  efficiency theory, includes many in the literature:
\begin{enumerate}
\item Optimal covariate adjustment \citep{henckel2019graphical,witte2020efficient}: $\hat{\tau}^{\texttt{adj.O}}_{AY}$ is the coefficient of $A$ in the least squares regression of $Y \sim A + O$, where $O$ is the optimal adjustment set, which minimizes asymptotic variance among all valid adjustment sets.

\item Recursive least squares \citep{nandy2017estimating, gupta2020estimating}: Under point intervention ($|A|=1$), total effect $\hat{\tau}_{AY}$ is taken as the coefficient of $A$ in the least square regression of $Y \sim A + \Pa(A)$. \citet[\S 3.1]{nandy2017estimating} consider a recursive relation that expresses $\hat{\tau}_{AY}$ under joint intervention ($|A|>1$) as a polynomial function of a collection of (intermediate) point intervention total effects.

\item Modified Cholesky decomposition \citep{nandy2017estimating}: If the causal ordering is known, then $\Gamma$ can be recovered by regressing each variables on all the preceding variables, which is equivalent to Cholesky decomposition of the covariance $\Sigma = L D L^{\T}$. In fact, $L^{-1}$ is a lower-triangular matrix filled with negative values of these regression coefficients and ones on the diagonal. Since the causal ordering is unknown, \citet[\S 3.2]{nandy2017estimating} consider an iterative procedure that alters between Cholesky decomposition and its reverse. 
\end{enumerate}

\begin{definition} \label{def:acov}
Consider an estimator $\hat{\theta}_n$ of $\theta$, $\theta \in \mathbb{R}^k$. We say that the asymptotic covariance of $\hat{\theta}_n$ is $S$, and write $\acov \hat{\theta}_n = S$, if $\sqrt{n}(\hat{\theta}_n - \theta) \distconvto \N(\bm{0}, S)$. When $k=1$, we write $\avar \hat{\theta}_n$ for asymptotic variance.
\end{definition}

For real symmetric matrices $A$ and $B$, we say $A \succeq B$ if $A - B$ is positive semidefinite. We now state our main result.

\begin{theorem}[Asymptotic efficiency of the $\g$-regression estimator] \label{thm:optimal-efficiency}
Suppose data is generated iid from a linear SEM \cref{eqs:sem-vector,eqs:sem-error} associated with an unknown causal DAG $\mathcal{D}$. Suppose $\mathcal{D} \in [\mathcal{G}]$ for a known MPDAG $\mathcal{G}$. Further, suppose for $A \subset V$, $Y \in V \setminus A$, $\tau_{AY}$ is identified under the criterion of \cref{thm:id-criterion}. 
Let $\hat{\tau}_{AY}^{\mathcal{G}}$ be the $\mathcal{G}$-regression estimator of $\tau_{AY}$ (\cref{def:G-reg-plugin}). Consider any consistent estimator $\hat{\tau}_{AY} = \hat{\tau}_{AY}(\hat{\Sigma}^{(n)})$ that is a differentiable function of the sample covariance. It holds that 
\begin{equation*}
\acov \left( \hat{\tau}_{AY} \right) \succeq \acov \left(\hat{\tau}_{AY}^{\mathcal{G}} \right).
\end{equation*}
\end{theorem}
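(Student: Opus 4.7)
The plan is to reduce the non-Gaussian case to the Gaussian case via a cancellation of the fourth-cumulant contribution to the asymptotic variance. For any consistent differentiable $\hat\tau = f(\hat\Sigma^{(n)})$, the delta method gives $\acov_P(\hat\tau) = \nabla f(\Sigma) \, \Psi_P \, \nabla f(\Sigma)^\T$, where $\Psi_P := \acov_P(\vect(\hat\Sigma^{(n)}))$. Using $X = (I - \Gamma)^{-\T} \epsilon$ and the mutual independence of the $\epsilon_m$'s, the standard cumulants-of-products formula yields
\[
\Psi_P = \Psi_{P^*} + K_4, \qquad K_4 = \sum_{m \in V} \kappa_4(\epsilon_m) \, u_m u_m^\T,
\]
where $P^*$ is the Gaussian linear SEM sharing the same covariance as $P$, $\kappa_4$ is the fourth cumulant, $a_m$ is the $m$-th column of $(I - \Gamma)^{-\T}$, and $u_m := \vect(a_m a_m^\T)$.

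The crux is the following tangent-direction observation: fixing $\Gamma$ and perturbing $\sigma_m^2 := \var(\epsilon_m)$ in the true underlying DAG $\g[D] \in [\g]$ produces a smooth curve through $\Sigma$ inside the model's covariance image with velocity $a_m a_m^\T$. Consistency of any $f$ therefore forces
\[
\nabla f(\Sigma) \cdot u_m = \nabla \tau(\Sigma) \cdot u_m =: v_m,
\]
a quantity that depends on $\tau$ alone. Writing $g$ for the $\g$-regression map defining $\hat\tau^{\g}$, we obtain $\nabla f \, K_4 \, \nabla f^\T = \nabla g \, K_4 \, \nabla g^\T = \sum_m \kappa_4(\epsilon_m) \, v_m v_m^\T$, so the fourth-cumulant part of the asymptotic variance is identical for every consistent $f$. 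Subtracting,
\[
\acov_P(\hat\tau) - \acov_P(\hat\tau^{\g}) = \acov_{P^*}(\hat\tau) - \acov_{P^*}(\hat\tau^{\g}).
\]

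The final step invokes maximum-likelihood efficiency under the Gaussian submodel. By \cref{lem:MLE-G-regression} and invariance of the MLE, $\hat\tau^{\g}$ is the MLE of $\tau$ under $P^*$. Since the Gaussian SEM compatible with $\g$ is a regular parametric family (smooth in $(\Lambda_k, \Omega_k)$), standard asymptotic likelihood theory yields $\acov_{P^*}(\hat\tau) \succeq \acov_{P^*}(\hat\tau^{\g})$ for every regular consistent $\hat\tau$; consistent differentiable functions of $\hat\Sigma^{(n)}$ all qualify. Combined with the cancellation above, this gives $\acov_P(\hat\tau) \succeq \acov_P(\hat\tau^{\g})$, as desired.

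The main obstacle I anticipate is making the tangent-direction step rigorous: the covariance locus of an MPDAG-compatible Gaussian model is in general a singular variety (a union of DAG-parametric strata), so care is needed to restrict to the smooth submodel indexed by the true DAG $\g[D]$ and argue that consistency forces $f$ to agree with $\tau$ along curves lying inside it. A secondary concern is verifying that the Gaussian Cram\'er--Rao bound applies uniformly across all competitors, which reduces to checking LAN-regularity for differentiable functions of $\hat\Sigma^{(n)}$.
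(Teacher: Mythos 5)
Your route is genuinely different from the paper's, and in outline it is viable. The paper never reduces to the Gaussian case: it introduces the saturated MPDAG $\bar{\g}$, shows that $\phi_{\bar{\g}}^{-1}$ is a diffeomorphism (\cref{lem:diffeo}), rewrites every covariance-based estimator in the coordinates $\bigl((\hat{\Lambda}^{\bar{\g}}_{k,\g}),(\hat{\Lambda}^{\bar{\g}}_{k,\g^c}),(\hat{\Omega}^{\bar{\g}}_k)\bigr)$, uses consistency to pin the gradient in the $\g$-directions and annihilate the gradient in the $\Omega$-directions (\cref{cor:gradient-conditions}), observes that the limiting covariance of the least-squares blocks is $\Omega_k\otimes\Sigma^{-1}$ --- free of fourth moments because the block errors are \emph{independent} of the regressors (\cref{lem:cov-G-bar-regression}) --- and then minimizes the resulting quadratic form over the free $\g^c$-directions by a Schur-complement computation (\cref{lem:quadratic-form,lem:kronecker-partial,lem:efficiency-bound}), with attainment checked in \cref{lem:efficiency-G-plugin}. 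You localize the fourth-moment dependence differently: the decomposition $\Psi_P=\Psi_{P^*}+\sum_m \kappa_4(\epsilon_m)\,u_m u_m^{\T}$ is correct for mutually independent errors (this is exactly where independence rather than mere uncorrelatedness is used, matching the remark after \cref{thm:optimal-efficiency}), and the error-variance perturbation curve cleanly shows that consistency forces $\nabla f\cdot u_m=\nabla g\cdot u_m$ (in fact both are zero, since rescaling $\epsilon_m$ moves $\Sigma$ but not $\tau_{AY}$), so the fourth-cumulant contribution cancels for every estimator in the class. Your worry about the singular covariance variety is moot: one explicit smooth curve inside the single stratum $\mathcal{P}_{\mathcal{D}}$ suffices. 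What your approach buys is a conceptual explanation of why non-Gaussianity is irrelevant for covariance-based estimators; what it costs is that the whole burden now falls on the last step, which you assert rather than prove.

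That last step --- ``standard asymptotic likelihood theory yields $\acov_{P^*}(\hat{\tau})\succeq\acov_{P^*}(\hat{\tau}^{\g})$'' --- is precisely what the paper's Section 6 machinery replaces, and it is not free. You would need to (i) verify that the Gaussian submodel of $\mathcal{P}_{\g}$, with the within-bucket constraints $\N(\bm{0},\Omega_k)\in\mathcal{P}_{\g_{B_k}}$ (equivalently \cref{eqs:algebraic-cons}), is a regular smooth parametric model; (ii) show that these constraints on $(\Omega_k)_k$ do not lower the attainable variance for functionals of $\Lambda$, which requires the block-diagonality of the Fisher information between $\Lambda_k$ and $\Omega_k$ (and across $k$), together with the restrictive property (\cref{cor:restrictive}) so that the blockwise OLS is indeed the constrained MLE (\cref{lem:MLE-G-regression}) and attains the $\Lambda$-block bound $\Omega_k\otimes(\Sigma_{\Pa(B_k,\g)})^{-1}$ --- without the restrictive property this step fails, by the seemingly-unrelated-regressions phenomenon; and (iii) argue that every consistent differentiable $f(\hat{\Sigma}^{(n)})$ is a \emph{regular} estimator at $P^*$, e.g.\ via a delta-method argument under local alternatives using $f\equiv\tau_{AY}$ on the model, so that the convolution/Cram\'er--Rao bound applies to it. These are standard but substantive verifications; you flag them, and with them supplied your argument closes, but as written the proof is incomplete at exactly this point.
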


It is clear from definitions that both $\hat{\tau}_{AY}^{\mathcal{G}}$ and $\hat{\tau}_{AY}$ are asymptotically linear. Therefore, their asymptotic covariances are well-defined. To prove  \cref{thm:optimal-efficiency}, it suffices to show that for every $w \in \mathbb{R}^{|A|}$
\begin{equation*}
\avar \left(w^\T \hat{\tau}_{AY} \right) \geq \avar \left(w^{\T} \hat{\tau}_{AY}^{\mathcal{G}} \right).
\end{equation*}
To this end, for any fixed $w \in \mathbb{R}^{|A|} $ we define $\tau_{w}$ as
\begin{equation} \label{eqs:tau-w}
\tau_{w} := w^{\T} \tau_{AY} = \tau_{w}(\Lambda),
\end{equation}
which is a smooth function of $\Lambda$. The corresponding $\mathcal{G}$-regression estimator $\hat{\tau}_{w}^{\mathcal{G}} := w^{\T} \hat{\tau}_{AY}^{\mathcal{G}} =  \tau_{w}(\hat{\Lambda}^{\mathcal{G}})$ is still a plugin estimator (now of $\tau_{w}$). Additionally, for a consistent estimator $\hat{\tau}_{AY}$ of $\tau_{AY}$, the corresponding $\hat{\tau}_w := w^{\T} \hat{\tau}_{AY} = \hat{\tau}_w(\hat{\Sigma}^{(n)})$ is a consistent estimator of $\tau_{w}$, in the form of a differentiable function of the sample covariance. It suffices to show $\avar \hat{\tau}_{w} \geq \avar \hat{\tau}_{w}^{\mathcal{G}}$ for every  $w \in \mathbb{R}^{|A|} $.

The rest of this section is devoted to proving \cref{thm:optimal-efficiency}. First, we introduce graph $\bar{\mathcal{G}}$ as a  saturated version of $\g$ (\cref{prop:satmpdag}). In \cref{sec:G-bar-reg}, we show that $\g$-regression with $\mathcal{G}$ replaced by $\bar{\mathcal{G}}$, aptly named $\bar{\g}$-regression, is a diffeomorphism between the space of covariance matrices and the space of parameters. In \cref{sec:class-of-estimators}, we characterize the class of estimators relative to which $\mathcal{G}$-regression is optimal. To prove \cref{thm:optimal-efficiency}, we establish an efficiency bound for this class of estimators in \cref{sec:effic-bound} and verify that $\mathcal{G}$-regression achieves this bound in \cref{sec:g-reg-effic}. Some of the proofs are left to \cref{apx:sec:asymp}. See also \cref{apx:fig:proof-diagram-eff} in for an overview of the dependency structure of our results in this section.

\subsection{$\bar{\mathcal{G}}$-regression as a diffeomorphism} \label{sec:G-bar-reg}

\begin{proposition}[Saturated MPDAG $\bar{\mathcal{G}}$] \label{prop:satmpdag} 
For MPDAG $\mathcal{G} = (V, E, U)$, an associated saturated MPDAG is $\bar{\mathcal{G}} = (V, \bar{E}, U)$, such that $\Pa(B_k, \mathcal{\bar{G}}) = B_{[k-1]}$ for $k = 2,\dots,K$, where $(B_1, \dots, B_K)$ is a bucket decomposition of $V$ in both $\g$ and $\bar{\g}$. 
\end{proposition}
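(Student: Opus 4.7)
The plan is to produce $\bar{\mathcal{G}}$ by explicit construction and then verify the claimed properties. Specifically, I would keep the vertex set $V$ and undirected edge set $U$ of $\mathcal{G}$ unchanged, and set
\begin{equation*}
\bar{E} := E \,\cup\, \{\, i \to j : i \in B_l,\; j \in B_k,\; 1 \le l < k \le K,\; i \text{ and } j \text{ not adjacent in } \mathcal{G}\,\}.
\end{equation*}
In other words, I would fully populate all inter-bucket edges, directing them from earlier to later buckets according to the ordering in \cref{eqs:bucket-ordering}, while leaving every intra-bucket edge (directed or undirected) exactly as in $\mathcal{G}$.

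Next I would verify the structural claims. Since the undirected edges of $\bar{\mathcal{G}}$ coincide with those of $\mathcal{G}$, the maximal connected components of the undirected subgraph are unchanged, so $(B_1, \dots, B_K)$ is still a bucket decomposition of $\bar{\mathcal{G}}$, and the same partial ordering in \cref{eqs:bucket-ordering} continues to hold. The identity $\Pa(B_k, \bar{\mathcal{G}}) = B_{[k-1]}$ is then immediate from the construction, since every vertex in an earlier bucket has been made a parent of every vertex in $B_k$. Acyclicity of $\bar{\mathcal{G}}$ as a partially directed graph is equally direct: any directed edge either stays within a single bucket (inherited unchanged from the acyclic $\mathcal{G}$) or strictly ascends the bucket ordering, so no directed cycle can form. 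Chordality of the undirected subgraph induced on each bucket is inherited from $\mathcal{G}$, where it was guaranteed by the MPDAG property.

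The one nontrivial remaining step is to certify that $\bar{\mathcal{G}}$ is itself a valid MPDAG, i.e.\ that $[\bar{\mathcal{G}}]$ is non-empty and that $\bar{\mathcal{G}}$ is closed under Meek's rules. For non-emptiness, I would take any $\mathcal{D} \in [\mathcal{G}]$ and augment it to $\bar{\mathcal{D}}$ by adding the same missing inter-bucket edges, oriented in the same direction as in $\bar{\mathcal{G}}$; by the restrictive property (\cref{cor:restrictive}) none of these additions conflict with existing orientations within $\mathcal{D}$, and respecting the partial bucket ordering keeps $\bar{\mathcal{D}}$ acyclic. The resulting DAG has the same skeleton and unshielded colliders as $\bar{\mathcal{G}}$, so $\bar{\mathcal{D}} \in [\bar{\mathcal{G}}]$. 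For maximality, I would argue that none of Meek's rules R1--R3 can orient a remaining undirected edge $i - j$ of $\bar{\mathcal{G}}$: any such edge lies in a common bucket $B_k$, and every external parent of either endpoint lies in $B_{[k-1]}$ and is therefore a parent of both endpoints in $\bar{\mathcal{G}}$. This common-parent structure immediately blocks R1 (no vertex can be non-adjacent to $j$ while pointing into $i$ from outside $B_k$); the premises of R2 and R3 likewise force the auxiliary vertices into $B_k$, reducing the question to the intra-bucket structure, which is identical to that of the already maximally oriented $\mathcal{G}$.

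The main obstacle I anticipate is this last verification of closure under Meek's rules. The construction and the bucket-level bookkeeping are routine, but checking R1--R3 carefully requires the common-parent observation above, together with the restrictive property to guarantee that no ``mixed'' configurations (with some auxiliary vertex inside and another outside $B_k$) arise. Once those cases are ruled out, the intra-bucket structure inherits maximality directly from $\mathcal{G}$, and the proposition follows.
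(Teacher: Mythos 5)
Your construction and verification strategy coincide with the paper's: saturate all between-bucket edges in the direction of the bucket ordering, observe that the undirected part (hence the bucket decomposition and the identity $\Pa(B_k,\bar{\g})=B_{[k-1]}$) is unchanged, and establish maximality by showing that no left-hand-side configuration of Meek's rules can occur in $\bar{\g}$ except entirely inside one bucket, where it would contradict the maximality of $\g$. Two points to patch. First, the paper's definition of an MPDAG (\cref{apx:sec:graph}) requires closure under rules R1--R4 of \cref{fig:orientationRules}, and its proof disposes of R3 and R4 together; you only address R1--R3. The omission is harmless but real: in R4's premise the pivot vertex is joined by undirected edges to all three other vertices, so, exactly as in your R3 case, the whole configuration lies in a single bucket, is therefore an induced subgraph of $\g$, and contradicts $\g$'s closure. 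Second, your assertion that R2's premises ``force the auxiliary vertex into $B_k$'' does not follow from the common-parent observation; it follows from the fact (which you state earlier when verifying acyclicity) that every directed edge of $\bar{\g}$ between distinct buckets ascends the ordering: if $A\to B\to C$ with $A - C$ and $B$ outside the common bucket of $A$ and $C$, the two inter-bucket arrows would place the bucket of $B$ both strictly later and strictly earlier, a contradiction; the paper instead runs a short case analysis on which arrows were added, but either route works. Your extra checks (acyclicity, chordality, and nonemptiness of $[\bar{\g}]$ via augmenting some $\mathcal{D}\in[\g]$) go beyond what the paper's syntactic definition demands and are welcome, though the claim that $\bar{\mathcal{D}}$ and $\bar{\g}$ share unshielded colliders deserves a line of justification, and the appeal to \cref{cor:restrictive} there is not doing any work since the added edges join previously non-adjacent pairs.
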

The proof can be found in \cref{apx:sec:graph-proofs}. In words, to create the saturated MPDAG $\bar{\mathcal{G}}$, we add all the possible directed edges between buckets $B_1, \dots, B_K$ subject to the ordering $B_1, \dots, B_K$. By construction, $\bar{\g}$ also satisfies the restrictive property in \cref{cor:restrictive}. See \cref{fig:PDAG-and-saturated} for an example. 

\begin{figure}[!htb]
\begin{subfigure}[t]{.45\linewidth}
  \centering
	\begin{tikzpicture}[->,>=triangle 45,shorten >=1pt,
        auto,thick,
        main node/.style={circle,inner
          sep=2pt,fill=gray!20,draw,font=\sffamily}]  
      
        \node[main node] (1) {1}; 
        \node[main node] (2) [right=1.5cm of 1] {2}; 
        \node[main node] (3) [right=1.5cm of 2] {3}; 
        \node[main node] (4) [below=1.5cm of 3] {4};
        \node[main node] (5) [left=1.5cm of 4] {5};
      	\node[main node] (6) [left=1.5cm of 5] {6};
      
        \path[color=black!20!blue,every
        node/.style={font=\sffamily\small}] 
        (1) edge node {} (2)
        (1) edge node {} (4)
        (4) edge node {} (5);
        
        \path[color=black!20!blue,every
        node/.style={font=\sffamily\small}, style={bend left}] 
        (1) edge node {} (3)
        (4) edge node {} (6);
        
        \path[color=black!20!blue,every
        node/.style={font=\sffamily\small}, style={-}] 
        (2) edge node {} (3)
        (3) edge node {} (4)
        (5) edge node {} (6);
      \end{tikzpicture}
\caption{}
\end{subfigure}
\begin{subfigure}[t]{.45\linewidth}
  \centering
      \begin{tikzpicture}[->,>=triangle 45,shorten >=1pt,
        auto,thick,
        main node/.style={circle,inner
          sep=2pt,fill=gray!20,draw,font=\sffamily}]  
      
        \node[main node] (1) {1}; 
        \node[main node] (2) [right=1.5cm of 1] {2}; 
        \node[main node] (3) [right=1.5cm of 2] {3}; 
        \node[main node] (4) [below=1.5cm of 3] {4};
        \node[main node] (5) [left=1.5cm of 4] {5};
      	\node[main node] (6) [left=1.5cm of 5] {6};
      
        \path[color=black!20!blue,every
        node/.style={font=\sffamily\small}] 
        (1) edge node {} (2)
        (4) edge node {} (5)
        (1) edge node {} (4);
        
        \path[color=black!20!blue,every
        node/.style={font=\sffamily\small}, style={dashed}] 
        (2) edge node {} (5)
        (3) edge node {} (5)
	    (1) edge node {} (5)
	    (1) edge node {} (6)
	    (2) edge node {} (6)
	    (3) edge node {} (6);
        
        \path[color=black!20!blue,every
        node/.style={font=\sffamily\small}, style={bend left}]
        (1) edge node {} (3)
        (4) edge node {} (6);
                
        \path[color=black!20!blue,every
        node/.style={font=\sffamily\small}, style={-}] 
        (2) edge node {} (3)
        (3) edge node {} (4)
        (5) edge node {} (6);
      \end{tikzpicture}
\caption{}
\end{subfigure}
\caption{(a) MPDAG $\mathcal{G}=(V, E, U)$ with buckets $B_1=\{1\}$, $B_2=\{2,3,4\}$ and $B_3=\{5,6\}$ and (b) its associated saturated MPDAG $\bar{\mathcal{G}}=(V, \bar{E}, U)$. The new edges in $\bar{E} \setminus E$ are drawn as dashed. Both $\mathcal{G}$ and $\bar{\mathcal{G}}$ satisfy the restrictive property in Corollary \ref{cor:restrictive}.}
\label{fig:PDAG-and-saturated}
\end{figure}
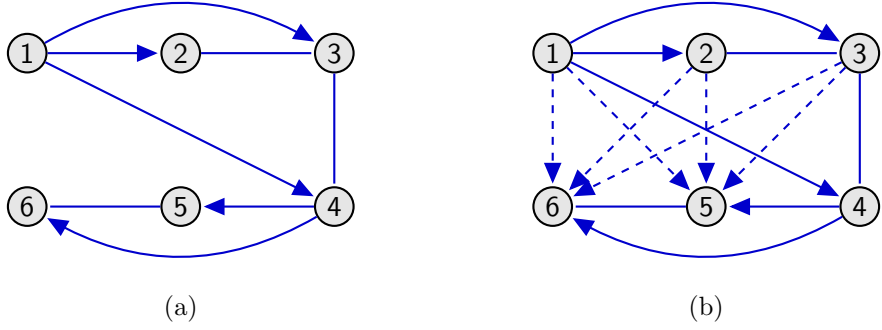

In the following, we introduce $\bar{\mathcal{G}}$-regression as a technical tool for establishing a diffeomorphism between the space of sample covariance matrices and the space of parameters in our semiparametric model. This link is the key to analyzing the efficiency of the estimators under consideration. 

Recall that $\mathcal{P}_{\mathcal{G}}$ is the set of observed distributions generated by some linear SEM associated with a causal DAG $\mathcal{D} \in [\mathcal{G}]$, which is characterized by \cref{prop:block-recursive}. 
More explicitly, let $Q_k$ be the law of $\varepsilon_{B_k}$ for $k=1,\dots,K$.
The set of laws is explicitly prescribed as 
\begin{equation} \label{eqs:p-G}
\mathcal{P}_{\mathcal{G}} = \left \{Q_{1}(X_{B_1}) \prod_{k=2}^{K} Q_{k} \left(X_{B_k} - \Lambda_{B_{[k-1]},B_{k}}^{\T} X_{B_{[k-1]}} \right): Q_k \in \mathcal{P}_{\mathcal{G}_{B_k}},\ i \rightarrow j \text{ not in } \mathcal{G} \Rightarrow \lambda_{ij} = 0 \right \}
\end{equation}
where the law is indexed by $\Lambda = (\lambda_{ij})$ and $(Q_k)_{k=1}^{K}$. This is a \emph{semiparametric} model and $(Q_k)_{k}$ is an infinite-dimensional nuisance \citep[Chap.~25]{van2000asymptotic}.

Consider the set of laws $\mathcal{P}_{\bar{\mathcal{G}}}$ associated with the saturated graph. Let $\Omega_k := \E_{Q_k} \varepsilon \varepsilon^{\T}$ be the covariance of $Q_k$ for $k=1,\dots,K$. Let $\PD{n}$ denote the set of $n \times n$ symmetric, positive definite matrices. By our assumption, $\Omega_k \in \PD{|B_k|}$. Also, consider the coefficients $\Lambda = (\lambda_{ij})$ such that $\lambda_{ij} \neq 0$ only if $i \rightarrow j$ in $\bar{\mathcal{G}}$, or equivalently, $i \in B_l$ and $j \in B_m$ for $l < m$. 
Then, the covariance of $X$, denoted as $\Sigma$, under any $P \in \mathcal{P}_{\bar{\mathcal{G}}}$ is determined from $(\Omega_k)_{k}$ and $\Lambda$. Let us write this \emph{covariance map} as 
\begin{equation*}
\Sigma = \phi_{\bar{\mathcal{G}}}\left((\Lambda_k)_{k=2}^{K}, (\Omega_k)_{k=1}^K \right),
\end{equation*}
where $\Lambda_k = \Lambda_{B_{[k-1]}, B_k}$ is of dimension $(|B_1| + \dots + |B_{k-1}|) \times |B_k|$. It follows from \cref{cor:block-recursive} that the covariance map $\phi_{\bar{\mathcal{G}}}$ is explicitly given by 
\begin{equation} \label{eqs:cov-map}
\Sigma_{B_1} = \Omega_1, \quad \Sigma_{B_k} = \Lambda_k^{\T} \Sigma_{B_{[k-1]}} \Lambda_k + \Omega_k, \quad \Sigma_{B_{[k-1]}, B_k} = \Sigma_{B_{[k-1]}} \Lambda_k, \quad k=2,\dots,K.
\end{equation}
Further, the covariance map $\phi_{\bar{\mathcal{G}}}$ is a \emph{diffeomorphism} between its domain and the set of $|V| \times |V|$ positive definite matrices.
\begin{lemma} \label{lem:diffeo}  
Covariance map $\phi_{\bar{\g}}$ given by \cref{eqs:cov-map} is invertible. Further, $\left( (\Lambda_k)_{k=2}^{K}, (\Omega_k)_{k=1}^{K} \right) \leftrightarrow \Sigma$ given by $\phi_{\bar{\g}}$ and its inverse $\phi^{-1}_{\bar{\g}}$ is a diffeomorphism between $\left(\bigtimes_{k=2}^{K} \mathbb{R}^{(|B_1| + \dots + |B_{k-1}|) \times |B_k|}\right) \times \left(\bigtimes_{k=1}^{K} \PD{|B_k|}\right)$ and $\PD{|V|}$.
\end{lemma}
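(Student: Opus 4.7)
My plan is to exhibit an explicit inverse of $\phi_{\bar{\g}}$ via block-wise Schur-complement formulas and then check smoothness in both directions. Conceptually, $\phi_{\bar{\g}}$ is just the block-wise LDL/Cholesky decomposition written in terms of regression coefficients and conditional covariances, so invertibility reduces to elementary linear algebra; the work is in the bookkeeping.

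First I would rewrite the forward map in closed form. Assemble $\Lambda$ as the $|V|\times|V|$ matrix that is strictly block-upper-triangular under the bucket ordering $B_1,\dots,B_K$ and whose $(B_{[k-1]},B_k)$-block equals $\Lambda_k$, and set $\Omega := \mathrm{bdiag}(\Omega_1,\dots,\Omega_K)$. Iterating \cref{eqs:cov-map} (equivalently, applying \cref{cor:block-recursive}) yields
\begin{equation*}
\phi_{\bar{\g}}\bigl((\Lambda_k),(\Omega_k)\bigr) \;=\; (I-\Lambda)^{-\T}\,\Omega\,(I-\Lambda)^{-1}.
\end{equation*}
Because $I-\Lambda$ is unit block-triangular it is always invertible, so this expression is smooth (polynomial composed with the inverse of a unit-triangular matrix) and positive definite whenever $\Omega \succ \bm{0}$. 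Hence $\phi_{\bar{\g}}$ maps into $\PD{|V|}$ and is $C^\infty$ on its entire domain.

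Next I would define a candidate inverse recursively by the standard population-regression / Schur-complement formulas: set $\Omega_1 := \Sigma_{B_1}$ and, for $k=2,\dots,K$,
\begin{equation*}
\Lambda_k := \Sigma_{B_{[k-1]}}^{-1}\,\Sigma_{B_{[k-1]},B_k}, \qquad
\Omega_k := \Sigma_{B_k} - \Sigma_{B_k,B_{[k-1]}}\,\Sigma_{B_{[k-1]}}^{-1}\,\Sigma_{B_{[k-1]},B_k}.
\end{equation*}
Every principal submatrix of $\Sigma \in \PD{|V|}$ is positive definite, so $\Sigma_{B_{[k-1]}}^{-1}$ exists, and each $\Omega_k$ is the Schur complement of a positive definite principal submatrix of $\Sigma$ and therefore lies in $\PD{|B_k|}$. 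A direct substitution into \cref{eqs:cov-map} verifies that this is a two-sided inverse: the identities $\Sigma_{B_{[k-1]}}^{-1}\bigl(\Sigma_{B_{[k-1]}}\Lambda_k\bigr)=\Lambda_k$ and $\Lambda_k^{\T}\Sigma_{B_{[k-1]}}\Lambda_k+\Omega_k=\Sigma_{B_k}$ are immediate, and they cover every block of $\Sigma$ because $\Sigma_{B_i,B_k}$ for $i<k$ appears as a sub-block of $\Sigma_{B_{[k-1]},B_k}$.

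Finally, smoothness of the inverse follows because each of its steps involves only inversion of $\Sigma_{B_{[k-1]}}$, whose determinant is a nonvanishing polynomial in the entries of $\Sigma$ on the open set $\PD{|V|}$; together with the polynomial smoothness of $\phi_{\bar{\g}}$, this gives the claimed diffeomorphism. There is no genuine obstacle here beyond the indexing: the only things to watch are (i) that the recursion actually recovers \emph{every} off-diagonal block of $\Sigma$, not just $\Sigma_{B_{[k-1]},B_k}$, and (ii) positive definiteness of the $\Omega_k$, both of which follow from the Schur-complement lemma applied level by level.
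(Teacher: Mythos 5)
Your proposal is correct and follows essentially the same route as the paper: you invert $\phi_{\bar{\g}}$ explicitly via the block regression/Schur-complement formulas, get positive definiteness of each $\Omega_k$ from the Schur-complement lemma, and obtain smoothness in both directions from the non-vanishing of $\det \Sigma_{B_{[k-1]}}$ on $\PD{|V|}$. Your added closed form $\Sigma=(I-\Lambda)^{-\T}\Omega(I-\Lambda)^{-1}$ and the explicit two-sidedness check are harmless refinements of details the paper leaves implicit.
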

\begin{proof}
By definition, covariance map $\phi_{\bar{\g}}$ is differentiable. To show diffeomorphism, we need to show that $\phi_{\bar{\mathcal{G}}}^{-1}(\Sigma)$ exists for every $\Sigma \in \PD{|V|}$ and that $\phi_{\bar{\mathcal{G}}}^{-1}$ is differentiable.
For any positive definite $\Sigma$, the inverse covariance map $\phi_{\bar{\mathcal{G}}}^{-1}(\Sigma)$ is explicitly given by
\begin{equation} \label{eqs:full-regression}
\Lambda_k = \left(\Sigma_{B_{[k-1]}} \right)^{-1} \Sigma_{B_{[k-1]},B_k}, \quad k=2, \dots, K,
\end{equation}
and 
\begin{equation} \label{eqs:inverse-cov-map-omega}
\Omega_k = \Sigma_{B_k \cdot B_{[k-1]}} = \Sigma_{B_k} - \Sigma_{B_{[k-1]},B_k}^{\T} \Sigma_{B_{[k-1]}}^{-1} \Sigma_{B_{[k-1]},B_k}, \quad k=1, \dots, K,
\end{equation}
where $\Sigma_{B_k \cdot B_{[k-1]}}$ is the Schur complement of block $B_k$ with respect to block $B_{[k-1]}$. Because $\Sigma$ is positive definite, Schur complement $\Omega_k$ is also positive definite \citep[page 495]{horn2012matrix}. Clearly, the map $\phi_{\bar{\mathcal{G}}}^{-1}(\cdot)$ is differentiable.
\end{proof}
By \cref{eqs:full-regression,eqs:inverse-cov-map-omega}, $\Lambda_k$ is the matrix of population least squares coefficients in a regression of $X_{B_k}$ onto $X_{B_1 \cup \dots \cup B_{k-1}}$ according to $\bar{\mathcal{G}}$, and $\Omega_k$ is the corresponding covariance of regression residuals. Hence, $\phi^{-1}_{\bar{\g}}(\Sigma)$ is called ``$\bar{\g}$-regression''. 

\begin{remark}
In the special case when $\g$ is a DAG such that every bucket $B_i$ is a singleton, \cref{lem:diffeo} reduces to $(\Lambda, \omega) \leftrightarrow \Sigma$ given by $(\phi_{\bar{\g}}, \phi_{\bar{\g}}^{-1})$ being a diffeomorphism between
\begin{equation*}
\left\{\Lambda \in \mathbb{R}^{|V| \times |V|}: \text{$\Lambda$ is upper-triangular} \right\} \times \left\{\omega \in \mathbb{R}^{|V|}: \omega_i > 0, \,i=1,\dots,|V| \right\} \,\longleftrightarrow\, \PD{|V|}.
\end{equation*}
The covariance map is $\Sigma =\phi_{\bar{\g}}(\Lambda, \omega) = (I - \Lambda)^{-\T}\diag(\omega)(I-\Lambda)^{-1}$, and the inverse covariance map $\phi_{\bar{\g}}^{-1}$ is given by the unique LDL decomposition of $\Sigma^{-1}$. \cref{lem:diffeo} is a generalization of \citet[Theorem 7.2]{drton2018algebraic}.
\end{remark}

\subsection{Covariance-based, consistent estimators} \label{sec:class-of-estimators}
We now characterize the class of estimators relative to which the optimality of our estimator is established. Recall that under $P \in \mathcal{P}_{\mathcal{G}}$, $\hat{\Sigma} = \hat{\Sigma}^{(n)}$ is the sample covariance, $\Sigma$ is the population covariance and $\tau_w = w^{\T} \tau_{AY}$. We assume that $n > \max_{k} \{|B_k| + |\Pa(B_k, \g)|\}$ such that $\hat{\Sigma}^{(n)}$ is positive definite almost surely \citep[Sec. 3.1]{drton2006maximum}. For simplicity, the superscript $(n)$ is often omitted. 

\begin{definition} \label{def:class-of-estimators}
The class of estimators for $\tau_w$ under consideration is 
\begin{multline} \label{eqs:class-of-estimators}
\mathcal{T}_{w} := \bigg\{\hat{\tau}_w\left(\hat{\Sigma}^{(n)}\right): \PD{|V|} \rightarrow \mathbb{R}: \\
 \hat{\tau}_w \text{ differentiable},\  \hat{\tau}_w(\hat{\Sigma}^{(n)}) \rightarrow_{p} \tau_{w}(P) \text{ as $n \rightarrow \infty$ under every $P \in \mathcal{P}_{\mathcal{G}}$}\bigg\}.
\end{multline}
\end{definition}
By definition, in particular, $\mathcal{T}_{w}$ includes all regular estimators computable with least squares operations.

\subsubsection{Characterizing $\mathcal{T}_w$} 
Let $(\hat{\Lambda}_k^{\bar{\g}})_{k=2}^{K}, (\hat{\Omega}_k^{\bar{\g}})_{k=1}^{K}$ be the image of $\hat{\Sigma}$ under $\phi_{\bar{\mathcal{G}}}^{-1}$. 
Recall that $(\Lambda_k)_{k=2}^{K}, (\Omega_k)_{k=1}^K$ is the image of $\Sigma$ under $\phi_{\bar{\mathcal{G}}}^{-1}$.
For a matrix $C$, let $\vect C$ denote vectorizing $C$ by concatenating its columns.
Each $\vect \hat{\Lambda}_k^{\bar{\g}}$ can be split by \emph{coordinates} into vectors
\begin{equation} \label{eqs:split-subscript}
\hat{\Lambda}_{k,\g}^{\bar{\g}} = \left(\hat{\lambda}^{\bar{\g}}_{ij}: j \in B_k, i \in \Pa(B_k, \g) \right), \quad \hat{\Lambda}_{k,\g^c}^{\bar{\g}} = \left(\hat{\lambda}^{\bar{\g}}_{ij}: j \in B_k, i \in \Pa(B_k, \bar{\g}) \setminus \Pa(B_k, \g) \right),
\end{equation}
where $\left(\hat{\Lambda}_{k,\g}^{\bar{\g}}\right)_k$ corresponds to between-bucket edges in $\g$ and $\left(\hat{\Lambda}_{k,\g^c}^{\bar{\g}}\right)_k$ corresponds to between-bucket edges in $\bar{\g}$ but not in $\g$.
In the example of \cref{fig:PDAG-and-saturated}, we have $\hat{\Lambda}_{2,\g}^{\bar{\g}} = (\hat{\lambda}^{\bar{\g}}_{12}, \hat{\lambda}^{\bar{\g}}_{13}, \hat{\lambda}^{\bar{\g}}_{14})^{\T}$, $\hat{\Lambda}_{3,\g}^{\bar{\g}}=(\hat{\lambda}^{\bar{\g}}_{45}, \hat{\lambda}^{\bar{\g}}_{46})^{\T}$ and $\hat{\Lambda}_{2,\g^c}^{\bar{\g}}=\texttt{NULL}$, $\hat{\Lambda}_{3,\g^c}^{\bar{\g}} = (\hat{\lambda}^{\bar{\g}}_{15}, \hat{\lambda}^{\bar{\g}}_{16}, \hat{\lambda}^{\bar{\g}}_{25}, \hat{\lambda}^{\bar{\g}}_{26}, \hat{\lambda}^{\bar{\g}}_{35}, \hat{\lambda}^{\bar{\g}}_{36})^{\T}$. Similarly, $\vect \Lambda_k$ can be split into $\Lambda_{k,\g}$ and $\Lambda_{k,\g^c}$ for $k=2,\dots,K$. 

The following lemma directly follows from \cref{def:class-of-estimators} and \cref{lem:diffeo}.

\begin{lemma} \label{lem:estimator-repr}
An estimator $\hat{\tau}_w \in \mathcal{T}_w$ can be written as 
\begin{equation*}
\hat{\tau}_w\left(\hat{\Sigma}^{(n)} \right) = \hat{\tau}_w\left( (\hat{\Lambda}^{\bar{\g}}_{k,\mathcal{G}})_{k=2}^{K},\, (\hat{\Lambda}^{\bar{\g}}_{k,\mathcal{G}^c})_{k=2}^{K},\,(\hat{\Omega}^{\bar{\g}}_k)_{k=1}^{K} \right)
\end{equation*}
for function $\hat{\tau}_w\left( (\hat{\Lambda}^{\bar{\g}}_{k,\mathcal{G}})_{k=2}^{K},\, (\hat{\Lambda}^{\bar{\g}}_{k,\mathcal{G}^c})_{k=2}^{K},\, (\hat{\Omega}^{\bar{\g}}_k)_{k=1}^{K} \right)$ that is differentiable in its arguments.
\end{lemma}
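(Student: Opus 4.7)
The plan is to invoke the diffeomorphism from \cref{lem:diffeo} and then unpack the resulting parameters according to the coordinate split in \cref{eqs:split-subscript}. Concretely, since $n > \max_k\{|B_k| + |\Pa(B_k,\g)|\}$, the sample covariance $\hat{\Sigma}^{(n)}$ lies in $\PD{|V|}$ almost surely, and so $\phi_{\bar{\g}}^{-1}$ may be applied to it. \cref{lem:diffeo} gives us a bijective, differentiable correspondence
\[
\hat{\Sigma}^{(n)} \;\longleftrightarrow\; \Bigl((\hat{\Lambda}^{\bar{\g}}_k)_{k=2}^K,\,(\hat{\Omega}^{\bar{\g}}_k)_{k=1}^K\Bigr),
\]
with both maps smooth on their respective open domains.

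The first step would be to write any $\hat{\tau}_w \in \mathcal{T}_w$ as the composition $\hat{\tau}_w = (\hat{\tau}_w \circ \phi_{\bar{\g}}) \circ \phi_{\bar{\g}}^{-1}$, viewed now as a function of the parameters rather than of $\hat{\Sigma}^{(n)}$. Because $\hat{\tau}_w$ is differentiable in $\hat{\Sigma}^{(n)}$ (by \cref{def:class-of-estimators}) and $\phi_{\bar{\g}}$ is differentiable by \cref{lem:diffeo}, the chain rule ensures that the composite mapping is differentiable in the parameters $\bigl((\hat{\Lambda}^{\bar{\g}}_k)_{k},(\hat{\Omega}^{\bar{\g}}_k)_k\bigr)$.

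Next, I would carry out the bookkeeping step: for each $k = 2,\dots,K$, the vector $\vect \hat{\Lambda}^{\bar{\g}}_k$ indexes coefficients $\hat{\lambda}^{\bar{\g}}_{ij}$ with $j \in B_k$ and $i \in \Pa(B_k,\bar{\g}) = B_{[k-1]}$. Per \cref{eqs:split-subscript}, this index set partitions into coordinates where $i \in \Pa(B_k,\g)$, giving $\hat{\Lambda}^{\bar{\g}}_{k,\g}$, and coordinates where $i \in \Pa(B_k,\bar{\g}) \setminus \Pa(B_k,\g)$, giving $\hat{\Lambda}^{\bar{\g}}_{k,\g^c}$. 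This is merely a relabeling of coordinates and is therefore a linear (hence smooth) bijection between $\vect \hat{\Lambda}^{\bar{\g}}_k$ and the pair $(\hat{\Lambda}^{\bar{\g}}_{k,\g},\hat{\Lambda}^{\bar{\g}}_{k,\g^c})$. Composing with this relabeling yields the claimed representation, and differentiability is preserved.

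There is essentially no obstacle here: the entire content of the lemma is already packaged into \cref{lem:diffeo}. The only thing to double-check is that the coordinate split in \cref{eqs:split-subscript} really does partition the index set of $\vect \hat{\Lambda}^{\bar{\g}}_k$ --- i.e., that $\Pa(B_k,\g) \subseteq \Pa(B_k,\bar{\g}) = B_{[k-1]}$, which is immediate from \cref{prop:satmpdag} since $\bar{\g}$ is constructed by adding edges to $\g$ while preserving the bucket decomposition and the ordering in \cref{eqs:bucket-ordering}.
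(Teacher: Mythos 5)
Your argument is correct and matches the paper's reasoning: the paper simply notes that the lemma ``directly follows'' from \cref{def:class-of-estimators} and \cref{lem:diffeo}, i.e., from writing $\hat{\tau}_w = (\hat{\tau}_w \circ \phi_{\bar{\g}}) \circ \phi_{\bar{\g}}^{-1}$ and using the differentiability of both maps, exactly as you do. Your additional check that the coordinate split in \cref{eqs:split-subscript} is a smooth relabeling with $\Pa(B_k,\g) \subseteq B_{[k-1]}$ is a harmless elaboration of the same argument.
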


The consistency of $\hat{\tau}_w$ implies the following two results.
\begin{lemma} \label{lem:consistency}
For any $\hat{\tau}_w \in \mathcal{T}_w$, it holds that
\begin{equation}
\hat{\tau}_w \left((\Lambda_{k,\mathcal{G}})_{k=2}^K,(\bm{0})_{k=2}^{K},(\Omega_k)_{k=1}^{K} \right) \equiv \tau_{w} \left((\Lambda_{k,\mathcal{G}})_{k=2}^{K} \right) \label{eqs:consistency-cond}
\end{equation}
for all $(\Lambda_{k,\mathcal{G}})_k$ and all positive definite $(\Omega_k)_k$. 
\end{lemma}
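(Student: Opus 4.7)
The plan is to exploit the consistency hypothesis in the definition of $\mathcal{T}_w$ together with the differentiability (hence continuity) of $\hat{\tau}_w$ on $\PD{|V|}$ to pin down the value of $\hat{\tau}_w$ on the ``slice'' of its domain where $\Lambda_{k,\mathcal{G}^c}=\bm{0}$ for all $k$.

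The first step is to realize the parameters on the left-hand side by a concrete law in $\mathcal{P}_{\mathcal{G}}$. Fix $(\Lambda_{k,\mathcal{G}})_{k=2}^{K}$ with the sparsity pattern of $\mathcal{G}$ and fix positive definite $(\Omega_k)_{k=1}^{K}$ such that each $\Omega_k$ is the covariance of some $Q_k\in\mathcal{P}_{\mathcal{G}_{B_k}}$; the simplest concrete choice is to take $\varepsilon_{B_k}\sim\mathcal{N}(\bm 0,\Omega_k)$ with $\Omega_k$ satisfying the algebraic constraints \eqref{eqs:algebraic-cons}. Assembling these via the block-recursive form of \cref{prop:block-recursive} produces a distribution $P\in\mathcal{P}_{\mathcal{G}}$ whose population covariance is $\Sigma = \phi_{\bar{\mathcal{G}}}\!\bigl((\Lambda_{k,\mathcal{G}})_k,(\bm 0)_k,(\Omega_k)_k\bigr)$, using the fact that $\Lambda_{k,\mathcal{G}^c}=\bm 0$ whenever $P\in\mathcal{P}_{\mathcal{G}}$.

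Next I would pass to the probability limit under $P$. By the law of large numbers, $\hat{\Sigma}^{(n)}\rightarrow_{p}\Sigma$, and by continuity of $\hat{\tau}_w$ together with the continuous mapping theorem, $\hat{\tau}_w(\hat{\Sigma}^{(n)})\rightarrow_{p}\hat{\tau}_w(\Sigma)$. At the same time, consistency of $\hat{\tau}_w$ under every law in $\mathcal{P}_{\mathcal{G}}$ gives $\hat{\tau}_w(\hat{\Sigma}^{(n)})\rightarrow_{p}\tau_w(P)$. By \cref{prop:id}, $\tau_w(P)$ depends on $P$ only through its block-recursive coefficients and hence equals $\tau_w((\Lambda_{k,\mathcal{G}})_k)$. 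Uniqueness of the probability limit then forces the identity $\hat{\tau}_w\!\bigl((\Lambda_{k,\mathcal{G}})_k,(\bm 0)_k,(\Omega_k)_k\bigr)=\tau_w\!\bigl((\Lambda_{k,\mathcal{G}})_k\bigr)$ for every realizable $(\Omega_k)_k$.

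The main subtlety is extending from realizable $(\Omega_k)_k$ to arbitrary positive definite $(\Omega_k)_k$: the set of Gaussian covariances compatible with $\mathcal{G}_{B_k}$ need not exhaust $\PD{|B_k|}$ when $\mathcal{G}_{B_k}$ fails to be complete. Since the right-hand side does not depend on $(\Omega_k)_k$ at all and $\hat{\tau}_w$ is continuous on $\PD{|V|}$, the identity first carries to the closure of the realizable set; any residual gap is benign, since only realizable parameter values are used in the subsequent efficiency analysis. Aside from this technical extension, the core argument is a direct pairing of the consistency hypothesis with the diffeomorphism in \cref{lem:diffeo} via the continuous mapping theorem.
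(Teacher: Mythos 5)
Your core argument is exactly the paper's own proof: under a law $P\in\mathcal{P}_{\mathcal{G}}$ realizing the given parameters, the law of large numbers, the diffeomorphism of \cref{lem:diffeo}, the representation in \cref{lem:estimator-repr} and the continuous mapping theorem give $\hat{\tau}_w(\hat{\Sigma}^{(n)})\rightarrow_{p}\hat{\tau}_w\bigl((\Lambda_{k,\mathcal{G}})_k,(\bm{0})_k,(\Omega_k)_k\bigr)$, and consistency together with \cref{prop:id} identifies this limit with $\tau_w\bigl((\Lambda_{k,\mathcal{G}})_k\bigr)$; so the proposal is correct and takes essentially the same route. The realizability caveat you raise is genuine, but note it applies equally to the paper's proof, which silently treats every positive definite $(\Omega_k)_k$ as attained by some $P\in\mathcal{P}_{\mathcal{G}}$: when some $\mathcal{G}_{B_k}$ is not complete, the attainable $\Omega_k$ satisfy the constraints in \cref{eqs:algebraic-cons} and form a relatively closed proper subset of $\PD{|B_k|}$, so your closure step adds nothing and the argument (yours or the paper's) establishes \cref{eqs:consistency-cond} only at such realizable $(\Omega_k)_k$. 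Be careful, however, with your final remark that the residual gap is benign because ``only realizable parameter values are used'' later: \cref{cor:gradient-conditions} Taylor-expands in arbitrary directions $\Delta\Omega_k$, i.e., through non-realizable values, so the restricted version of the lemma does not literally support that corollary as stated --- this is a scope issue in the paper's formulation rather than a defect of your reproduction of its argument.
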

\begin{proof}
Under any $P \in \mathcal{P}_{\mathcal{G}}$, since $\hat{\Sigma} \rightarrow_p \Sigma$ as $n \rightarrow \infty$ by the law of large numbers, by \cref{lem:diffeo} and the continuous mapping theorem \citep[page 11]{van2000asymptotic}, we have $\hat{\Lambda}_{k,\mathcal{G}}^{\bar{\g}} \rightarrow_{p} \Lambda_{k,\mathcal{G}}$, $\hat{\Lambda}^{\bar{\g}}_{k,\mathcal{G}^c} \rightarrow_{p} \bm{0}$ and $\hat{\Omega}^{\bar{\g}}_k \rightarrow_{p} \Omega_k$ for $k=2,\dots,K$. By \cref{lem:estimator-repr} and continuous mapping again, $\hat{\tau}_w \rightarrow_{p} \hat{\tau}_w \left((\Lambda_{k,\mathcal{G}})_{k=2}^K,(\bm{0})_{k=2}^{K},(\Omega_k)_{k=1}^{K} \right)$. The result then follows from the consistency of $\hat{\tau}_w$ under every $P \in \mathcal{P}_{\mathcal{G}}$. 
\end{proof}

\begin{corollary} \label{cor:gradient-conditions}
For $\hat{\tau}_w \in \mathcal{T}_{w}$, at any $((\Lambda_{k,\mathcal{G}})_{k=2}^K,(\bm{0})_{k=2}^{K},(\Omega_k)_{k=1}^{K})$, it holds that 
\begin{equation} \label{eqs:cond-gradients}
\frac{\partial \hat{\tau}_w}{\partial \Lambda_{k,\mathcal{G}}} = \frac{\partial \tau_{w}}{\partial \Lambda_{k,\mathcal{G}}} \ (k=2,\dots,K), \quad \frac{\partial \hat{\tau}_w}{\partial \Omega_{k}} = \bm{0} \ (k=1,\dots,K).
\end{equation}
\end{corollary}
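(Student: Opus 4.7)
The plan is to obtain Corollary \ref{cor:gradient-conditions} as a direct consequence of the functional identity in Lemma \ref{lem:consistency} by differentiating both sides. The identity
\[
\hat{\tau}_w\!\left((\Lambda_{k,\mathcal{G}})_{k=2}^K,(\bm{0})_{k=2}^{K},(\Omega_k)_{k=1}^{K} \right) = \tau_{w}\!\left((\Lambda_{k,\mathcal{G}})_{k=2}^{K} \right)
\]
holds on the open parameter domain $\bigtimes_{k=2}^{K}\mathbb{R}^{|\Pa(B_k,\g)|\times|B_k|} \,\times\, \bigtimes_{k=1}^{K}\PD{|B_k|}$, so both sides are differentiable there: the left-hand side by Lemma \ref{lem:estimator-repr}, which writes $\hat{\tau}_w$ as a differentiable function of the inverse-covariance-map coordinates, and the right-hand side by the explicit formula \cref{eqs:effect-id} for $\tau_{w}$ as a rational function of $\Lambda_{A,D}$ and $\Lambda_{D,D}$, both of which are determined by $(\Lambda_{k,\mathcal{G}})_{k=2}^K$ alone.

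Since the identity is an equality of two differentiable functions on an open set, I can differentiate term by term. First, take the partial derivative with respect to $\Lambda_{k,\mathcal{G}}$ for fixed $k\in\{2,\dots,K\}$: the left-hand side yields $\partial \hat{\tau}_w/\partial \Lambda_{k,\mathcal{G}}$ evaluated at $((\Lambda_{k,\mathcal{G}})_{k=2}^K,(\bm{0})_{k=2}^{K},(\Omega_k)_{k=1}^{K})$, while the right-hand side yields $\partial \tau_w / \partial \Lambda_{k,\mathcal{G}}$. This gives the first set of equalities in \cref{eqs:cond-gradients}. Next, differentiate with respect to $\Omega_k$ for fixed $k\in\{1,\dots,K\}$: the right-hand side is free of $\Omega_k$, so its partial vanishes, forcing $\partial \hat{\tau}_w/\partial \Omega_k = \bm{0}$ at the specified point.

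There is essentially no obstacle here; the only detail to confirm is that the identity of Lemma \ref{lem:consistency} truly holds on an open neighborhood in each argument (so that partials are well-defined), which follows because $(\Lambda_{k,\mathcal{G}})_k$ ranges over a Euclidean space and each $\Omega_k$ ranges over the open cone $\PD{|B_k|}$. The arguments $(\Lambda_{k,\mathcal{G}^c})_{k=2}^K$ are held at $\bm{0}$, so the stated gradients are taken in the remaining directions only, matching the claim.
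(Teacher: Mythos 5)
Your proposal is correct and follows essentially the same route as the paper: the paper's proof writes the same argument as a first-order Taylor expansion of $\hat{\tau}_w$ at $((\Lambda_{k,\mathcal{G}})_k,(\bm{0})_k,(\Omega_k)_k)$ in the $\Lambda_{k,\mathcal{G}}$- and $\Omega_k$-directions and matches it against $\tau_w$ via Lemma~\ref{lem:consistency}, which is exactly your "differentiate the consistency identity with the $\mathcal{G}^c$-arguments held at zero." Your remark that the identity only needs to hold on the slice where $(\Lambda_{k,\mathcal{G}^c})_k = \bm{0}$, with the relevant partials taken in the remaining coordinate directions, correctly handles the one subtle point.
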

\begin{proof}
Let symbol $\langle \cdot, \cdot \rangle$ denote inner product. Since $\hat{\tau}_w$ is differentiable (\cref{lem:estimator-repr}), by a Taylor expansion at $((\Lambda_{k,\mathcal{G}})_{k=2}^K,(\bm{0})_{k=2}^K,(\Omega_k)_{k=1}^K)$, we have
\begin{equation*}
\begin{split}
&\quad \hat{\tau}_w \left((\Lambda_{k,\mathcal{G}} + \Delta \Lambda_{k,\mathcal{G}})_{k=2}^K,(\bm{0})_{k=2}^K,(\Omega_k + \Delta \Omega_k)_{k=1}^K \right) - \hat{\tau}_w\left((\Lambda_{k,\mathcal{G}})_{k=2}^K,(\bm{0})_{k=2}^K,(\Omega_k)_{k=1}^K \right)\\
&= \sum_{k=2}^{K} \left( \left\langle \frac{\partial \hat{\tau}_w}{\partial \Lambda_{k,\mathcal{G}}}, \Delta \Lambda_{k,\mathcal{G}} \right\rangle + o(\|\Delta \Lambda_{k,\mathcal{G}}\|) \right )+ \sum_{k=1}^{K} \left(\left\langle \frac{\partial \hat{\tau}_w}{\partial \Omega_{k}}, \Delta \Omega_k \right \rangle  + o(\|\Delta \Omega_{k}\|) \right),
\end{split}
\end{equation*}
which by \cref{lem:consistency} must equal $\tau_{w}((\Lambda_{k,\mathcal{G}} + \Delta \Lambda_{k,\mathcal{G}})_{k=2}^K) - \tau_{w}((\Lambda_{k,\mathcal{G}})_{k=2}^K)$. The result then follows from the differentiability of $\tau_w(\cdot)$ and the definition of derivatives. 
\end{proof}

Note that \cref{cor:gradient-conditions} is similar to the conditions imposed on influence functions in standard semiparametric efficiency theory; see, e.g., \citet[Corollary 1, \S 3.1]{tsiatis2007semiparametric}. However, the gradients $ \partial \hat{\tau}_w/ \partial \hat{\Lambda}^{\bar{\g}}_{k,\mathcal{G}^{c}}$ for $k=2,\dots,K$ are \emph{free to vary} because $\hat{\Lambda}^{\bar{\g}}_{k,\mathcal{G}^c} \rightarrow_{p} \bm{0}$. That is, an estimator $\hat{\tau}_w \in \mathcal{T}_w$ can take arbitrary values as its second argument varies in the vicinity of zero, as long as differentiability is maintained.

\subsection{Asymptotic covariance of least squares coefficients}
We use this section to derive some asymptotic results that will be used to prove \cref{thm:optimal-efficiency}.

Consider a vertex $j \in B_k$ for $k \in \{2, \dots , K\}$ and a set of vertices $C$ such that $ \Pa(B_k, \mathcal{G}) \subseteq C \subseteq \Pa(B_k, \bar{\mathcal{G}})$. Let $\hat{\lambda}_{C,j}^{(n)} \in \mathbb{R}^{|C|}$ be the least squares coefficients from regressing $X_j$ onto $X_C$ under sample size $n$. Let $\lambda_{C,j}$ be the corresponding true edge coefficient vector from $\Lambda$ in \cref{prop:block-recursive}. Then $\lambda_{C,j}$ has non-zero coordinates only for those indices in   $\Pa(B_k, \mathcal{G})$. Because $X_j = \lambda_{C,j}^{\T} X_C + \varepsilon_j$ with $\varepsilon_j \indep X_C$ by \cref{cor:block-recursive}, we have $\hat{\lambda}_{C,j}^{(n)} \rightarrow_{p} \lambda_{C,j}$ under every $P \in \mathcal{P}_{\mathcal{G}}$. Moreover, we have the following asymptotic linear expansion. 

\begin{lemma} \label{lem:RAL}
Let $j$ be a vertex in bucket $B_k$ for $k \in \{2,\dots,K\}$. Let $C$ be a set of vertices such that $\Pa(B_k, \g) \subseteq C \subseteq \Pa(B_k, \bar{\g})$. Under any $P \in \mathcal{P}_{\g}$, it holds that
\begin{equation*}
\hat{\lambda}_{C,j}^{(n)} - \lambda_{C, j} = \frac{1}{n} \sum_{i=1}^{n} (\Sigma_{C})^{-1} X_{C}^{(i)} \varepsilon_{j}^{(i)} + O_{p}(n^{-1}),
\end{equation*}
where $\Sigma = \E_{P} X X^{\T}$, $\hat{\lambda}_{C,j}^{(n)}$ is the vector of least squares coefficients from regressing $X_j$ onto $X_C$ under sample size $n$, and $\lambda_{C,j}$ is the vector of true coefficients in \cref{prop:block-recursive}.
\end{lemma}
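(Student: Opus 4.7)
The plan is to reduce the claim to the standard asymptotic linearization of an OLS estimator, once the regression $X_j \sim X_C$ has been placed in a clean form. The paragraph preceding the statement already records that $X_j = \lambda_{C,j}^{\T}X_C + \varepsilon_j$ with $\varepsilon_j \indep X_C$: this follows from \cref{cor:block-recursive}, since $\varepsilon_{B_k}$ is a function of $\epsilon_{B_k}$ alone and is therefore independent of $X_{B_{[k-1]}}$, together with the inclusion $C \subseteq \Pa(B_k,\bar{\g}) = B_{[k-1]}$; the fact that $\lambda_{C,j}$ has zero entries for indices in $C \setminus \Pa(B_k,\g)$ is read off from the constraint \cref{eqs:lambda-cons}, once one notes that the population coefficient $\Sigma_C^{-1}\Sigma_{C,j}$ coincides with the embedding of $\lambda_{\Pa(B_k,\g),j}$ into $\mathbb{R}^{|C|}$. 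I would begin by recording this, since it is the only place where the graphical content of the statement actually enters.

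Given this, starting from the closed form $\hat{\lambda}^{(n)}_{C,j} = \hat{\Sigma}_C^{-1}\hat{\Sigma}_{C,j}$ and substituting $X_j^{(i)} = \lambda_{C,j}^\T X_C^{(i)} + \varepsilon_j^{(i)}$ into $\hat{\Sigma}_{C,j} = n^{-1}\sum_i X_C^{(i)} X_j^{(i)}$ yields
\begin{equation*}
\hat{\lambda}^{(n)}_{C,j} - \lambda_{C,j} \;=\; \hat{\Sigma}_C^{-1} \bar{U}_n, \qquad \bar{U}_n \;:=\; \frac{1}{n}\sum_{i=1}^n X_C^{(i)} \varepsilon_j^{(i)}.
\end{equation*}
Applying the identity $A^{-1} - B^{-1} = -B^{-1}(A - B)A^{-1}$ with $A = \hat{\Sigma}_C$ and $B = \Sigma_C$ then gives
\begin{equation*}
\hat{\lambda}^{(n)}_{C,j} - \lambda_{C,j} \;=\; \Sigma_C^{-1}\bar{U}_n \;-\; \Sigma_C^{-1}(\hat{\Sigma}_C - \Sigma_C)\hat{\Sigma}_C^{-1}\bar{U}_n,
\end{equation*}
so the only remaining task is to show the second term is $O_p(n^{-1})$.

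This reduces to routine $O_p$-bookkeeping. Independence $\varepsilon_j \indep X_C$ combined with $\E\varepsilon_j = 0$ gives $\E[X_C \varepsilon_j] = \bm{0}$, so the multivariate CLT yields $\bar{U}_n = O_p(n^{-1/2})$; likewise $\hat{\Sigma}_C - \Sigma_C = O_p(n^{-1/2})$ by the CLT applied to the entries of $X_C X_C^{\T}$, and the continuous mapping theorem gives $\hat{\Sigma}_C^{-1} = \Sigma_C^{-1} + O_p(n^{-1/2}) = O_p(1)$. Multiplying the three factors delivers the $O_p(n^{-1})$ bound and hence the claimed expansion.

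The only non-mechanical piece of the argument is the opening observation that the same $\varepsilon_j$ appearing in the bucket-level structural equation also plays the role of the population regression residual for the enlarged regressor set $C$; this is what lets a single lemma cover every intermediate $C$ with $\Pa(B_k,\g)\subseteq C \subseteq \Pa(B_k,\bar{\g})$, and it is where the block-recursive representation earns its keep. Everything else is a textbook delta-method expansion, relying only on the linear SEM \cref{eqs:sem-vector,eqs:sem-error} supplying enough moments of $X$ and $\varepsilon$ for the CLT to apply, an assumption the surrounding efficiency theory already uses implicitly.
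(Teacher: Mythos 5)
Your proof is correct and follows essentially the same route as the paper's: both rest on the observation that $\varepsilon_j$ from the block-recursive form serves as the population residual for any regressor set $C$ with $\Pa(B_k,\g)\subseteq C\subseteq B_{[k-1]}$, followed by the standard OLS linearization with $O_p$-bookkeeping under the same implicit moment assumptions. The only difference is cosmetic: you obtain the exact identity $\hat{\lambda}^{(n)}_{C,j}-\lambda_{C,j}=\hat{\Sigma}_C^{-1}\bar{U}_n$ first and then apply the resolvent identity, whereas the paper splits into two terms and uses a Neumann-series expansion of $(\hat{\Sigma}_C)^{-1}-(\Sigma_C)^{-1}$ before recombining.
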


We now use \cref{lem:RAL} to obtain the covariance structure of $\bar{\mathcal{G}}$-regression coefficients $(\hat{\Lambda}_{k}^{\bar{\g}})_{k=2}^{K}$. Recall that $\hat{\Lambda}_{k}^{\bar{\g}} \in \mathbb{R}^{|B_{[k-1]}| \times |B_k|}$ with $B_{[k-1]} = B_1 \cup \dots \cup B_{k-1}$ and 
\begin{equation*}
\left( (\hat{\Lambda}_{k}^{\bar{\g}})_{k=2}^{K}, (\hat{\Omega}_{k}^{\bar{\g}})_{k=1}^{K} \right) = \phi_{\bar{\mathcal{G}}}^{-1} \left( \hat{\Sigma}^{(n)} \right),
\end{equation*}
as given by \cref{eqs:full-regression,eqs:inverse-cov-map-omega}. For matrices $A \in \mathbb{R}^{m \times n}, B \in \mathbb{R}^{p \times q}$, the Kronecker product $A \otimes B$ is an $mp \times nq$ matrix given by
\[A \otimes B = \begin{pmatrix} a_{11}B &\cdots &a_{1n}B \\
 \vdots & \ddots & \vdots \\
a_{m1}B &\cdots &a_{mn}B
\end{pmatrix}. \]

\begin{lemma} \label{lem:cov-G-bar-regression}
Let $(\hat{\Lambda}_{k}^{\bar{\g}})_{k=2}^{K}$ be the $\bar{\mathcal{G}}$-regression coefficients under sample size $n$. Under any $P \in \mathcal{P}_{\mathcal{G}}$, it holds that 
\begin{equation*}
\sqrt{n} \begin{pmatrix} \vect (\hat{\Lambda}_{2}^{\bar{\g}} - \Lambda_{2}) \\
\vdots \\
\vect (\hat{\Lambda}_{K}^{\bar{\g}} - \Lambda_{K})
\end{pmatrix} \distconvto \N\left(\bm{0},\, \diag\left\{ \Omega_2 \otimes \left(\Sigma_{B_{[1]}}\right)^{-1}, \dots, \Omega_{K} \otimes \left(\Sigma_{B_{[K-1]}} \right)^{-1} \right\} \right).
\end{equation*}
\end{lemma}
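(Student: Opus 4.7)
My plan is to apply \cref{lem:RAL} column-by-column to each $\hat{\Lambda}_k^{\bar{\g}}$, stack the resulting asymptotic linear expansions into a single sum of iid mean-zero vectors, and then compute the limiting covariance using standard Kronecker-product identities and the mutual independence of $(\varepsilon_{B_k})_{k=1}^K$ guaranteed by \cref{prop:block-recursive}.

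First, fix $k \in \{2,\dots,K\}$ and a vertex $j \in B_k$. Taking $C = \Pa(B_k,\bar{\g}) = B_{[k-1]}$ in \cref{lem:RAL} gives
\begin{equation*}
\hat{\lambda}^{\bar{\g}}_{B_{[k-1]},j} - \lambda_{B_{[k-1]},j} = \frac{1}{n} \sum_{i=1}^n \bigl(\Sigma_{B_{[k-1]}}\bigr)^{-1} X^{(i)}_{B_{[k-1]}} \varepsilon^{(i)}_j + O_p(n^{-1}).
\end{equation*}
Collecting the columns indexed by $j \in B_k$ and using $\vect(a b^{\T}) = b \otimes a$, this becomes
\begin{equation*}
\sqrt{n}\,\vect\!\bigl(\hat{\Lambda}_k^{\bar{\g}} - \Lambda_k\bigr) = \frac{1}{\sqrt{n}} \sum_{i=1}^n \Bigl(\varepsilon^{(i)}_{B_k} \otimes \bigl(\Sigma_{B_{[k-1]}}\bigr)^{-1} X^{(i)}_{B_{[k-1]}}\Bigr) + o_p(1).
\end{equation*}
Stacking these across $k=2,\dots,K$ yields $\sqrt n$ times the target vector written as $n^{-1/2}\sum_i Z^{(i)} + o_p(1)$, where $Z^{(i)}$ is the stacked iid mean-zero vector whose $k$-th block is $\varepsilon^{(i)}_{B_k} \otimes (\Sigma_{B_{[k-1]}})^{-1} X^{(i)}_{B_{[k-1]}}$. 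The multivariate CLT then delivers asymptotic normality with covariance matrix $\E[Z Z^{\T}]$.

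It remains to compute the block structure of $\E[Z Z^{\T}]$. For the diagonal block $k = l$, use $(u\otimes v)(u\otimes v)^{\T} = (u u^{\T}) \otimes (v v^{\T})$ and the independence $\varepsilon_{B_k} \indep X_{B_{[k-1]}}$ from \cref{cor:block-recursive} to obtain
\begin{equation*}
\E\bigl[(\varepsilon_{B_k} \otimes (\Sigma_{B_{[k-1]}})^{-1} X_{B_{[k-1]}})(\varepsilon_{B_k} \otimes (\Sigma_{B_{[k-1]}})^{-1} X_{B_{[k-1]}})^{\T}\bigr] = \Omega_k \otimes (\Sigma_{B_{[k-1]}})^{-1},
\end{equation*}
since the inner $X_{B_{[k-1]}} X_{B_{[k-1]}}^{\T}$ collapses to $\Sigma_{B_{[k-1]}}$ and sandwiches between the inverses. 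For an off-diagonal block with $k<l$, the same identity gives a factor $\E[\varepsilon_{B_k} \varepsilon_{B_l}^{\T}]$ or (after rearrangement) a factor involving $\varepsilon_{B_l}$ multiplied by quantities depending only on $\varepsilon_{B_1},\dots,\varepsilon_{B_{l-1}}$ and $X_{B_{[l-1]}}$; the mutual independence of $\varepsilon_{B_1},\dots,\varepsilon_{B_K}$ together with $\E\varepsilon_{B_l}=0$ (\cref{prop:block-recursive}) forces this block to vanish. Hence $\E[Z Z^{\T}]$ is block-diagonal with the advertised blocks.

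I expect the main bookkeeping obstacle to be keeping the Kronecker/vectorization ordering consistent across blocks of different sizes; once that convention is fixed, the independence-based vanishing of cross-block covariances is a short calculation. The rest is a direct application of the multivariate CLT and Slutsky's theorem to absorb the $o_p(1)$ remainder. No appeal to the linear-SEM structure beyond \cref{cor:block-recursive} is needed, so the argument goes through regardless of whether the errors are Gaussian.
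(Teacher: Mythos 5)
Your proposal is correct and follows essentially the same route as the paper: apply \cref{lem:RAL} with $C = B_{[k-1]}$ to each column, stack the resulting asymptotic linear expansions, invoke the CLT, and kill the cross-bucket covariance blocks via $\varepsilon_{B_{k'}} \indep X_{B_{[k'-1]}}$ and $\E \varepsilon_{B_{k'}} = 0$. The only difference is presentational --- you identify the within-bucket blocks $\Omega_k \otimes (\Sigma_{B_{[k-1]}})^{-1}$ at once via the Kronecker mixed-product identity, whereas the paper computes the $(j,j')$ entries pairwise and then matches them to the Kronecker-product definition.
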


\begin{remark}
$\sqrt{n} \vect (\hat{\Lambda}_{k}^{(n)} - \Lambda_{k}) \distconvto \N\left(\bm{0},\, \Omega_{k} \otimes \left(\Sigma_{B_{[k-1]}} \right)^{-1}\right)$ is equivalent to
\begin{equation*}
\sqrt{n} (\hat{\Lambda}_{k}^{(n)} - \Lambda_{k}) \distconvto \mathcal{MN}\left(\bm{0}, \left(\Sigma_{B_{[k-1]}} \right)^{-1}, \Omega_{k} \right),
\end{equation*}
where the RHS is a centered matrix normal distribution with row covariance $(\Sigma_{B_{[k-1]}})^{-1}$ and column covariance $\Omega_{k}$; see \citet{dawid1981some}.
\end{remark}

Similarly, we can compute the asymptotic covariance of the $\mathcal{G}$-regression coefficients. To obtain the result below, we rely on the restrictive property of $\mathcal{G}$ (\cref{cor:restrictive}). 

\begin{lemma} \label{lem:cov-G-regression}
Let $(\hat{\Lambda}_{k}^{\mathcal{G}})_{k=2}^{K}$ be the $\mathcal{G}$-regression coefficients as defined in \cref{lem:MLE-G-regression} under sample size $n$. Under any $P \in \mathcal{P}_{\mathcal{G}}$, it holds that 
\begin{equation*} \label{eqs:G-CLT}
\sqrt{n} \begin{pmatrix} \vect (\hat{\Lambda}_{2}^{\mathcal{G}} - \Lambda_{2}) \\
\vdots \\
\vect (\hat{\Lambda}_{K}^{\mathcal{G}} - \Lambda_{K})
\end{pmatrix} \distconvto \N\left(\bm{0},\, \diag\left\{ \Omega_2 \otimes \left(\Sigma_{\Pa(B_2,\mathcal{G})}\right)^{-1}, \dots, \Omega_{K} \otimes \left(\Sigma_{\Pa(B_K,\mathcal{G})}\right)^{-1} \right\} \right).
\end{equation*}
\end{lemma}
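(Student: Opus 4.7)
The plan is to reduce the statement to a routine application of Lemma \ref{lem:RAL} together with the multivariate central limit theorem, with the restrictive property (\cref{cor:restrictive}) and the between-block independence of the errors in the block-recursive form (\cref{prop:block-recursive}, \cref{cor:block-recursive}) doing the structural work.

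First I would fix $k \in \{2, \dots, K\}$ and note that, by the restrictive property, the $\mathcal{G}$-regression of each column $j \in B_k$ of $\hat{\Lambda}_k^{\mathcal{G}}$ uses the \emph{common} regressor set $C_k := \Pa(B_k, \mathcal{G})$. Choosing this $C_k$ in \cref{lem:RAL} (which is admissible since $C_k = \Pa(B_k, \mathcal{G}) \subseteq \Pa(B_k, \bar{\mathcal{G}})$) gives, for every $j \in B_k$,
\begin{equation*}
\hat{\lambda}_{C_k,j}^{(n)} - \lambda_{C_k, j} \;=\; \frac{1}{n} \sum_{i=1}^{n} \left(\Sigma_{C_k}\right)^{-1} X_{C_k}^{(i)} \varepsilon_{j}^{(i)} + O_{p}(n^{-1}).
\end{equation*}
Stacking these identities column-by-column then bucket-by-bucket produces a single $n^{-1/2}$-scaled iid sum for the full vector on the left-hand side of the claim, with a remainder of order $n^{-1/2}$. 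By the multivariate Lindeberg--L\'evy CLT (finite second moments follow from $\E \epsilon \epsilon^{\T} \succ \bm{0}$ being finite and $X$ being a linear function of $\epsilon$), this stacked sum is asymptotically centered normal; it remains to compute its covariance.

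Next I would compute the covariance block at indices $(k,j),(k',j')$. The generic block equals $\Sigma_{C_k}^{-1}\, \E\!\left[X_{C_k} X_{C_{k'}}^{\T}\, \varepsilon_{j}\, \varepsilon_{j'}\right] \Sigma_{C_{k'}}^{-1}$. When $k = k'$, both $\varepsilon_j$ and $\varepsilon_{j'}$ are coordinates of $\varepsilon_{B_k}$, which is independent of $X_{C_k}$ by \cref{cor:block-recursive}; hence the expectation factors as $\Sigma_{C_k}\cdot (\Omega_k)_{j,j'}$, and the $(j,j')$ block collapses to $(\Omega_k)_{j,j'}\, \Sigma_{C_k}^{-1}$. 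Assembling the columns of $\hat{\Lambda}_k^{\mathcal{G}}$ via $\vect$ (which stacks columns) yields precisely the Kronecker structure $\Omega_k \otimes \Sigma_{C_k}^{-1}$. When $k \neq k'$, say $k < k'$, the key observation is that $\varepsilon_{j'} \in \varepsilon_{B_{k'}}$ is independent of the triple $(X_{C_k}, X_{C_{k'}}, \varepsilon_j)$: indeed $X_{C_k}, X_{C_{k'}}$ are measurable functions of $X_{B_{[k'-1]}}$, and both $\varepsilon_{B_{k'}} \indep X_{B_{[k'-1]}}$ (\cref{cor:block-recursive}) and $\varepsilon_{B_{k'}} \indep \varepsilon_{B_k}$ (\cref{prop:block-recursive}) hold. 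Since $\E \varepsilon_{j'} = 0$, the entire cross-bucket block vanishes, yielding the block-diagonal form in the statement.

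Putting these two computations together, the asymptotic covariance of the stacked vector is exactly $\diag\{\Omega_2 \otimes \Sigma_{C_2}^{-1},\dots,\Omega_K \otimes \Sigma_{C_K}^{-1}\}$, which is the claim. The only delicate point---and the step to write out most carefully---is the conditioning argument that establishes $\varepsilon_{j'} \indep (X_{C_k}, X_{C_{k'}}, \varepsilon_j)$ for $k < k'$; everything else is bookkeeping on the influence-function representation supplied by \cref{lem:RAL}, plus the observation that the restrictive property is what lets us use a single regressor set $C_k$ per bucket so that the within-bucket covariance assembles cleanly into a Kronecker product.
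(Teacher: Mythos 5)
Your proposal is correct and follows essentially the same route as the paper: apply \cref{lem:RAL} with the common regressor set $C_k=\Pa(B_k,\mathcal{G})$ (admissible by \cref{cor:restrictive}), stack the resulting asymptotic linear expansions, and compute the limiting covariance block-by-block exactly as in the proof of \cref{lem:cov-G-bar-regression}, with within-bucket blocks giving $\Omega_k\otimes(\Sigma_{\Pa(B_k,\mathcal{G})})^{-1}$ and cross-bucket blocks vanishing. One cosmetic remark: for the cross-bucket argument you do not need the two pairwise independencies you cite --- since $\varepsilon_j$ for $j\in B_k$ with $k<k'$ is itself a measurable function of $X_{B_{[k'-1]}}$, the single statement $\varepsilon_{B_{k'}}\indep X_{B_{[k'-1]}}$ from \cref{cor:block-recursive} already yields $\varepsilon_{j'}\indep(X_{C_k},X_{C_{k'}},\varepsilon_j)$, which is also how the paper argues.
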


\subsection{Efficiency bound} \label{sec:effic-bound}
We first notice a simple fact of the quadratic form and a property of the Kronecker product.
\begin{lemma} \label{lem:quadratic-form}
Let $S  \in \PD{n},  x \in \mathbb{R}^n$ and suppose that $(A, B)$ is a partition of the set $\{1,\dots,n\}$. For any fixed $x_{A}$, it holds that
\begin{equation*}
x^{\T} S x \ge x_{A}^{\T} (S_{A \cdot B}) x_{A},
\end{equation*}
where $S_{A \cdot B} = S_{A,A} - S_{A,B} S_{B,B}^{-1} S_{B,A}$. The equality holds if and only if $x_{B} = -S_{B,B}^{-1} S_{B,A} x_{A}$. 
\end{lemma}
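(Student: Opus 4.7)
The plan is to prove this by completing the square in $x_B$, treating $x_A$ as fixed. This is the most direct route and yields both the inequality and the equality condition simultaneously.

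First, I would expand the quadratic form block-wise:
\[
x^\T S x = x_A^\T S_{A,A} x_A + 2 x_A^\T S_{A,B} x_B + x_B^\T S_{B,B} x_B,
\]
using that $S$ is symmetric so $S_{B,A} = S_{A,B}^\T$. Since $S \succ \bm{0}$, the principal submatrix $S_{B,B}$ is also positive definite and hence invertible. Define $x_B^{\star} := -S_{B,B}^{-1} S_{B,A} x_A$, which is the unique minimizer of the RHS as a function of $x_B$ (found by setting the gradient $2 S_{B,A} x_A + 2 S_{B,B} x_B$ to zero).

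The key algebraic step is to verify the identity
\[
x^\T S x = (x_B - x_B^{\star})^\T S_{B,B} (x_B - x_B^{\star}) + x_A^\T \bigl(S_{A,A} - S_{A,B} S_{B,B}^{-1} S_{B,A}\bigr) x_A,
\]
which is a short expansion: the quadratic-in-$x_B$ term reproduces $x_B^\T S_{B,B} x_B$; the cross term contributes $-2 x_B^\T S_{B,B} x_B^{\star} = 2 x_B^\T S_{B,A} x_A$, matching the off-diagonal term; and the constant-in-$x_B$ piece is $x_B^{\star \T} S_{B,B} x_B^{\star} = x_A^\T S_{A,B} S_{B,B}^{-1} S_{B,A} x_A$, which combines with $x_A^\T S_{A,A} x_A$ to give the Schur complement $S_{A \cdot B}$.

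The conclusion is then immediate: since $S_{B,B} \succ \bm{0}$, the first term on the RHS of the displayed identity is nonnegative, proving $x^\T S x \geq x_A^\T S_{A \cdot B} x_A$, with equality holding if and only if $x_B - x_B^{\star} = \bm{0}$, i.e., $x_B = -S_{B,B}^{-1} S_{B,A} x_A$. There is no real obstacle here; the only thing to watch is the bookkeeping of signs and the use of symmetry $S_{B,A} = S_{A,B}^\T$ when collecting cross terms.
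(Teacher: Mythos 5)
Your proposal is correct and uses the same argument as the paper: completing the square in $x_B$ (equivalently, adding and subtracting $x_A^{\T} S_{A,B} S_{B,B}^{-1} S_{B,A} x_A$), using positive definiteness of the principal submatrix $S_{B,B}$ to conclude the inequality and to read off the equality condition $x_B = -S_{B,B}^{-1} S_{B,A} x_A$. No gaps.
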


\begin{lemma}[{\citet[Theorem 1]{liu1999some}}] \label{lem:kronecker-partial} 
Let $A \in \mathbb{R}^{m \times m}$ and $C \in \mathbb{R}^{n \times n}$ be non-singular. Suppose $\alpha \subset [m]$, $\beta \subset [n]$. Let $\alpha^{c}$, $\beta^{c}$ denote their respective complements. Let $\gamma^{c} = \{n(i-1) + j: i \in \alpha^c, j \in \beta^c\}$ and $\gamma = [mn] \setminus \gamma^{c}$. We have
\begin{equation*}
A_{\alpha^{c} \cdot \alpha} \otimes C_{\beta^{c} \cdot \beta} = (A \otimes C)_{\gamma^{c} \cdot \gamma}.
\end{equation*}
\end{lemma}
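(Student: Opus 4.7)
The plan is to reduce the claimed Schur-complement identity to an ordinary submatrix identity by using the fact that the Schur complement of a block of an invertible matrix is the inverse of the corresponding block of its inverse. Concretely, for any invertible $M=\begin{pmatrix}M_{11}&M_{12}\\M_{21}&M_{22}\end{pmatrix}$ one has $M_{11}-M_{12}M_{22}^{-1}M_{21}=\bigl[(M^{-1})_{11}\bigr]^{-1}$. Applying this three times gives
\[
A_{\alpha^c\cdot\alpha}=\bigl[(A^{-1})_{\alpha^c,\alpha^c}\bigr]^{-1},\quad C_{\beta^c\cdot\beta}=\bigl[(C^{-1})_{\beta^c,\beta^c}\bigr]^{-1},\quad (A\otimes C)_{\gamma^c\cdot\gamma}=\bigl[((A\otimes C)^{-1})_{\gamma^c,\gamma^c}\bigr]^{-1}.
\]

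Next I would invoke the standard Kronecker inversion identity $(A\otimes C)^{-1}=A^{-1}\otimes C^{-1}$ to rewrite the third expression as $\bigl[(A^{-1}\otimes C^{-1})_{\gamma^c,\gamma^c}\bigr]^{-1}$. The core step is then to verify the submatrix identity
\[
(P\otimes Q)_{\gamma^c,\gamma^c}=P_{\alpha^c,\alpha^c}\otimes Q_{\beta^c,\beta^c}
\]
for any matrices $P\in\mathbb{R}^{m\times m}$, $Q\in\mathbb{R}^{n\times n}$ and index sets $\alpha^c\subseteq[m]$, $\beta^c\subseteq[n]$, with $\gamma^c$ defined by the lexicographic convention $\{n(i-1)+j: i\in\alpha^c,j\in\beta^c\}$. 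This follows directly from the definition of the Kronecker product: entry $(n(i-1)+j,\,n(i'-1)+j')$ of $P\otimes Q$ equals $p_{ii'}q_{jj'}$, and restricting both row and column indices to the set $\gamma^c$ precisely leaves the Kronecker product of $P_{\alpha^c,\alpha^c}$ with $Q_{\beta^c,\beta^c}$.

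Applying this identity to $P=A^{-1}$ and $Q=C^{-1}$ and then invoking Kronecker inversion once more in the reverse direction, $(A^{-1})_{\alpha^c,\alpha^c}\otimes (C^{-1})_{\beta^c,\beta^c}$ has inverse $\bigl[(A^{-1})_{\alpha^c,\alpha^c}\bigr]^{-1}\otimes\bigl[(C^{-1})_{\beta^c,\beta^c}\bigr]^{-1}$, which by the first step equals $A_{\alpha^c\cdot\alpha}\otimes C_{\beta^c\cdot\beta}$. Chaining these three equalities yields the claim.

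The main obstacle is bookkeeping: one has to check carefully that the definition of $\gamma^c$ via $n(i-1)+j$ really aligns the rows and columns of $(A\otimes C)$ so that restriction equals the Kronecker product of restrictions in the right order, and that $(A^{-1})_{\alpha^c,\alpha^c}$ and $(C^{-1})_{\beta^c,\beta^c}$ are themselves invertible (which holds because Schur complements of invertible matrices are invertible, ensured by the nonsingularity of $A$ and $C$). Once these indexing and invertibility details are tracked, the argument is a short chain of algebraic identities.
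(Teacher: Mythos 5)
The paper does not prove this lemma at all: it is imported verbatim from \citet[Theorem 1]{liu1999some}, so there is no internal proof to compare against. Your argument is a correct, self-contained derivation and is in the same spirit as the identity the paper does use later, namely $(H^{-1})_{\beta^c\cdot\beta}=(H_{\beta^c,\beta^c})^{-1}$: you combine the block-inverse/Schur-complement duality $M_{\alpha^c\cdot\alpha}=\bigl[(M^{-1})_{\alpha^c,\alpha^c}\bigr]^{-1}$ with $(A\otimes C)^{-1}=A^{-1}\otimes C^{-1}$, the fact that extracting the rows and columns indexed by $\gamma^c=\{n(i-1)+j:i\in\alpha^c,\,j\in\beta^c\}$ from a Kronecker product yields the Kronecker product of the corresponding principal submatrices (the increasing order on $\gamma^c$ agrees with the lexicographic order built into $\otimes$, as you note), and the mixed inversion $\bigl[P_{\alpha^c,\alpha^c}\otimes Q_{\beta^c,\beta^c}\bigr]^{-1}=\bigl[P_{\alpha^c,\alpha^c}\bigr]^{-1}\otimes\bigl[Q_{\beta^c,\beta^c}\bigr]^{-1}$. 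This chain is valid and gives the stated identity.

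Two small caveats on hypotheses. First, nonsingularity of $A$ and $C$ alone does not make the Schur complements well defined: you also need the pivot blocks $A_{\alpha,\alpha}$, $C_{\beta,\beta}$ and $(A\otimes C)_{\gamma,\gamma}$ to be invertible (e.g.\ $A=\bigl(\begin{smallmatrix}0&1\\1&0\end{smallmatrix}\bigr)$ is nonsingular but $A_{\{2\},\{2\}}=0$). This is an implicit assumption of the lemma statement itself, not a defect of your proof, but your closing remark that invertibility of $(A^{-1})_{\alpha^c,\alpha^c}$ is ``ensured by the nonsingularity of $A$'' is not accurate in general; it is ensured by invertibility of $A$ \emph{together with} $A_{\alpha,\alpha}$, via the block-inverse formula. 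In the paper's application the matrices are positive definite (and in fact $\alpha=\emptyset$ there), so every principal submatrix is invertible and all steps go through.
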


\begin{lemma} \label{lem:efficiency-bound}
Suppose the assumptions of \cref{thm:optimal-efficiency} hold. Fix $w \in \mathbb{R}^{|A|}$ and let $\tau_{w} = w^{\T} \tau_{AY} = \tau_w((\Lambda_{k,\mathcal{G}})_{k=2}^K)$ as in \cref{eqs:tau-w}. 
Consider any estimator $\hat{\tau}_w \in \mathcal{T}_{w}$ given by \cref{def:class-of-estimators}. Then under any $P \in \mathcal{P}_{\mathcal{G}}$, it holds that
\begin{equation} \label{eqs:efficiency-bound}
\avar(\hat{\tau}_w) \geq \sum_{k=2}^{K} h_k^{\T} \Omega_{k} \otimes (\Sigma_{\Pa(B_k, \mathcal{G})})^{-1} h_k,
\end{equation}
where $(\Omega_k)_{k=2}^{K}$ and $\Sigma$ are determined by $P$, and the gradient vectors $h_k = {\partial \tau_w((\Lambda_{k,\mathcal{G}})_k)} / {\partial \Lambda_{k,\mathcal{G}}}$ for $k=2,\dots,K$ evaluated at $(\Lambda_{k,\mathcal{G}})_k$ are determined by $\tau_w(\cdot)$ and $P$.
\end{lemma}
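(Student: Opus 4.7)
The plan is to reduce $\avar(\hat{\tau}_w)$ to a quadratic form in the $\bar{\g}$-regression coefficients, minimize it over the directions that are unconstrained by consistency (using \cref{lem:quadratic-form}), and then collapse the resulting Schur complement via \cref{lem:kronecker-partial}.

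First, I would apply \cref{lem:estimator-repr} to write $\hat{\tau}_w$ as a differentiable function of $\bigl( (\hat{\Lambda}_{k,\g}^{\bar{\g}})_{k}, (\hat{\Lambda}_{k,\g^c}^{\bar{\g}})_{k}, (\hat{\Omega}_{k}^{\bar{\g}})_{k} \bigr)$ and perform a first-order Taylor expansion around the true value $\bigl((\Lambda_{k,\g})_k, (\bm{0})_k, (\Omega_k)_k\bigr)$. By \cref{cor:gradient-conditions}, at that point the gradient with respect to $\Lambda_{k,\g}$ equals $h_k$ and the gradient with respect to $\Omega_k$ vanishes, while the gradient with respect to $\Lambda_{k,\g^c}$, which I call $g_k$, is unconstrained by consistency. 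Since $\hat{\Sigma} - \Sigma = O_p(n^{-1/2})$, the remainder is $o_p(n^{-1/2})$, yielding
\begin{equation*}
\sqrt{n}(\hat{\tau}_w - \tau_w) = \sum_{k=2}^{K} \left( h_k^{\T} \sqrt{n}\, \vect(\hat{\Lambda}_{k,\g}^{\bar{\g}} - \Lambda_{k,\g}) + g_k^{\T} \sqrt{n}\, \vect \hat{\Lambda}_{k,\g^c}^{\bar{\g}} \right) + o_p(1).
\end{equation*}

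Next, I would invoke \cref{lem:cov-G-bar-regression}: the stacked vectors $\sqrt{n}(\vect \hat{\Lambda}_k^{\bar{\g}} - \vect \Lambda_k)$ for $k = 2, \dots, K$ are jointly asymptotically normal with block-diagonal covariance whose $k$-th block is $S_k := \Omega_k \otimes (\Sigma_{B_{[k-1]}})^{-1}$. Letting $v_k$ be the vector that places $h_k$ in the $\g$-coordinates and $g_k$ in the $\g^c$-coordinates, this gives $\avar(\hat{\tau}_w) = \sum_{k=2}^{K} v_k^{\T} S_k v_k$. Because $h_k$ is pinned down by the target while $g_k$ may vary freely across $\mathcal{T}_w$, applying \cref{lem:quadratic-form} to each summand produces the lower bound $v_k^{\T} S_k v_k \ge h_k^{\T} (S_k)_{\g \cdot \g^c} h_k$.

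The main technical step, and the one I expect to demand the most care, is identifying $(S_k)_{\g \cdot \g^c}$ with $\Omega_k \otimes (\Sigma_{\Pa(B_k,\g)})^{-1}$. \cref{lem:kronecker-partial} lets the Schur complement of a Kronecker product factorize; since no coordinates of $B_k$ (the column index of $\hat{\Lambda}_k^{\bar{\g}}$) are being eliminated, the $\Omega_k$ factor is preserved, leaving $\Omega_k \otimes (\Sigma_{B_{[k-1]}}^{-1})_{\Pa(B_k,\g)\, \cdot\, B_{[k-1]} \setminus \Pa(B_k,\g)}$. The inner Schur complement of the precision matrix then collapses to $(\Sigma_{\Pa(B_k,\g)})^{-1}$ via the classical block-inverse identity $(M^{-1})_{AA} - (M^{-1})_{AB} ((M^{-1})_{BB})^{-1} (M^{-1})_{BA} = (M_{AA})^{-1}$ applied to $M = \Sigma_{B_{[k-1]}}$. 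Summing the bounds across $k$ gives the stated inequality.
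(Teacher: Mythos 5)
Your proposal is correct and follows essentially the same route as the paper's proof: Taylor expansion (delta method) via \cref{lem:estimator-repr} and \cref{cor:gradient-conditions}, bucket-wise decomposition from the block-diagonal covariance in \cref{lem:cov-G-bar-regression}, the Schur-complement lower bound of \cref{lem:quadratic-form} over the free directions $g_k$, and the identification $(S_k)_{\g \cdot \g^c} = \Omega_k \otimes (\Sigma_{\Pa(B_k,\g)})^{-1}$ via \cref{lem:kronecker-partial} together with the block-inverse identity $(H^{-1})_{\beta^c \cdot \beta} = (H_{\beta^c,\beta^c})^{-1}$. The only cosmetic difference is that you write out the asymptotic linear expansion explicitly where the paper simply cites the delta method.
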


\begin{proof}
By \cref{lem:estimator-repr}, estimator $\hat{\tau}_w \in \mathcal{T}_{w}$ can be written as
\begin{equation*}
\hat{\tau}_w = \hat{\tau}_w\left( (\hat{\Lambda}^{\bar{\g}}_{k,\mathcal{G}})_{k=2}^{K}, \ (\hat{\Lambda}^{\bar{\g}}_{k,\mathcal{G}^c})_{k=2}^{K}, \ (\hat{\Omega}_k^{\bar{\g}})_{k=1}^{K} \right),
\end{equation*}
where the arguments correspond to the image of $\hat{\Sigma}$ under $\phi_{\bar{\mathcal{G}}}^{-1}$; see \cref{eqs:split-subscript}. Estimator $\hat{\tau}_w \in \mathcal{T}_{w}$ is asymptotically normal. By the delta method \citep[Sec 11.2]{shorack2000probability}, we have
\begin{equation*}
\avar (\hat{\tau}_w) = \begin{pmatrix}
\partial \hat{\tau}_w / \partial ({\Lambda}_{k,\mathcal{G}})_{k=2}^K \\
\partial \hat{\tau}_w / \partial ({\Lambda}_{k,\mathcal{G}^{c}})_{k=2}^K \\
\partial \hat{\tau}_w / \partial ({\Omega}_k)_{k=1}^K
\end{pmatrix}^{\T} \acov 
\begin{Bmatrix}
\vect\, (\hat{\Lambda}^{\bar{\g}}_{k,\mathcal{G}})_{k=2}^{K} \\
\vect\, (\hat{\Lambda}^{\bar{\g}}_{k,\mathcal{G}^c})_{k=2}^{K}\\
\vect\, (\hat{\Omega}^{\bar{\g}}_k)_{k=1}^{K}
\end{Bmatrix}
\begin{pmatrix}
\partial \hat{\tau}_w / \partial ({\Lambda}_{k,\mathcal{G}})_{k=2}^K \\
\partial \hat{\tau}_w / \partial ({\Lambda}_{k,\mathcal{G}^{c}})_{k=2}^K \\
\partial \hat{\tau}_w / \partial ({\Omega}_k)_{k=1}^K
\end{pmatrix},
\end{equation*}
where the partial derivatives of $\hat{\tau}_w(\cdot)$ are evaluated at $\left( ({\Lambda}_{k,\mathcal{G}})_{k=2}^{K},\, (\bm{0})_{k=2}^{K},\, ({\Omega}_k)_{k=1}^{K})\right)$, the image of $\Sigma$ under $\phi_{\bar{\g}}^{-1}$.

Using $\partial \hat{\tau}_w / \partial \Omega_{k} = \bm{0}$ for $k=1,\dots,K$ from \cref{cor:gradient-conditions}, it follows that
\begin{equation*}
\begin{split}
\avar (\hat{\tau}_w) &= \begin{pmatrix}
\partial \hat{\tau}_w / \partial ({\Lambda}_{k,\mathcal{G}})_{k=2}^K \\
\partial \hat{\tau}_w / \partial ({\Lambda}_{k,\mathcal{G}^{c}})_{k=2}^K
\end{pmatrix}^{\T} \acov 
\begin{Bmatrix}
\vect\, (\hat{\Lambda}^{\bar{\g}}_{k,\mathcal{G}})_{k=2}^{K} \\
\vect\, (\hat{\Lambda}^{\bar{\g}}_{k,\mathcal{G}^c})_{k=2}^{K}
\end{Bmatrix}
\begin{pmatrix}
\partial \hat{\tau}_w / \partial ({\Lambda}_{k,\mathcal{G}})_{k=2}^K \\
\partial \hat{\tau}_w / \partial ({\Lambda}_{k,\mathcal{G}^{c}})_{k=2}^K
\end{pmatrix} \\
&= \sum_{k=2}^{K} \begin{pmatrix}
\partial \hat{\tau}_w / \partial {\Lambda}_{k,\mathcal{G}} \\
\partial \hat{\tau}_w / \partial {\Lambda}_{k,\mathcal{G}^{c}}
\end{pmatrix}^{\T} \acov 
\begin{Bmatrix}
\hat{\Lambda}^{\bar{\g}}_{k,\mathcal{G}} \\
\hat{\Lambda}^{\bar{\g}}_{k,\mathcal{G}^c}
\end{Bmatrix}
\begin{pmatrix}
\partial \hat{\tau}_w / \partial {\Lambda}_{k,\mathcal{G}} \\
\partial \hat{\tau}_w / \partial {\Lambda}_{k,\mathcal{G}^{c}}
\end{pmatrix},
\end{split}
\end{equation*}
where we have used the block-diagonal structure of the asymptotic covariance from \cref{lem:cov-G-bar-regression}. Let
\[ S^{(k)} := \acov \begin{Bmatrix}
\hat{\Lambda}^{\bar{\g}}_{k,\mathcal{G}} \\
\hat{\Lambda}^{\bar{\g}}_{k,\mathcal{G}^c}
\end{Bmatrix}, \quad k=2,\dots,K,  \]
which equals 
\[ S^{(k)} = \Omega_{k} \otimes \left(\Sigma_{B_{[k-1]}} \right)^{-1}, \quad k =2,\dots,K, \]
by \cref{lem:cov-G-bar-regression}. From \cref{cor:gradient-conditions}, note that $\partial \hat{\tau}_w / \partial ({\Lambda}_{k,\mathcal{G}})_k \equiv h_k$ is \emph{fixed} for $k=2,\dots,K$. Then, \cref{lem:quadratic-form} yields the lower bound 
\begin{equation*}
\avar(\hat{\tau}_w) \geq \sum_{k=2}^{K} h_k^{\T} S^{(k)}_{\mathcal{G} \cdot \mathcal{G}^c} h_k,
\end{equation*}
where indices $\g$ and $\g^c$ correspond to the coordinates in $\hat{\Lambda}^{\bar{\g}}_{k,\mathcal{G}}$ and $\hat{\Lambda}^{\bar{\g}}_{k,\mathcal{G}^c}$ respectively. 
Indices $\g$ correspond to $\{(i,j): j \in B_k, i \in \Pa(B_k, \g)\}$; by construction of $\bar{\g}$, indices $\g^c$ correspond to $\{(i,j): j \in B_k, i \in \Pa(B_k, \bar{\g}) \setminus \Pa(B_k, \g)\}$. 
Now, to abuse the notation slightly, we apply \cref{lem:kronecker-partial} with
\[ A = \Omega_k,\quad C = (\Sigma_{B_{[k-1]}})^{-1}, \quad \alpha = \emptyset, \quad \beta = \Pa(B_k, \bar{\g}) \setminus \Pa(B_k, \g), \]
such that 
\[ \alpha^c = \{1,\dots,|B_k|\}, \quad \beta^c = \Pa(B_k, \g), \quad \gamma = \g^c, \quad \gamma^c = \g. \]
We obtain
\begin{equation*}
S^{(k)}_{\mathcal{G} \cdot \mathcal{G}^c} = \Omega_k \otimes \left[(\Sigma_{B_{[k-1]}})^{-1}\right]_{\beta^{c} \cdot \beta} = \Omega_{k} \otimes \left(\Sigma_{\Pa(B_k, \g)}\right)^{-1},
\end{equation*}
where the last step follows from $(H^{-1})_{\beta^{c} \cdot \beta} = (H_{\beta^c, \beta^c})^{-1}$ \citep[\S 0.8]{horn2012matrix}.
\end{proof}

\subsection{Efficiency of $\mathcal{G}$-regression estimator} \label{sec:g-reg-effic}
In \cref{sec:MLE}, we have seen that when the errors are Gaussian, the $\mathcal{G}$-regression plugin is the MLE and hence achieves the efficiency bound. Here, we show that this is still true relative to the class of estimators we consider, even though the errors are not necessarily Gaussian. We verify that $\hat{\tau}_w^{\mathcal{G}} = w^{\T} \hat{\tau}_{AY}^{\mathcal{G}}$ achieves the efficiency bound above. 

\begin{lemma} \label{lem:efficiency-G-plugin}
Let $\hat{\tau}_w^{\mathcal{G}} := w^{\T} \hat{\tau}_{AY}^{\mathcal{G}}$, where $\hat{\tau}_{AY}^{\g}$ is the $\g$-regression estimator (\cref{def:G-reg-plugin}). Under the same assumptions as \cref{lem:efficiency-bound}, it holds that $\hat{\tau}_{w}^{\mathcal{G}} \in \mathcal{T}_{w}$ and $\hat{\tau}_w^{\mathcal{G}}$ achieves the efficiency bound in \cref{eqs:efficiency-bound} under every $P \in \mathcal{P}_{\mathcal{G}}$. 
\end{lemma}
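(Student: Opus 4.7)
The plan splits into two tasks: showing $\hat{\tau}_w^{\mathcal{G}} \in \mathcal{T}_w$ and computing $\avar(\hat{\tau}_w^{\mathcal{G}})$ to match the lower bound in \cref{lem:efficiency-bound}.

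For membership, the formula in \cref{lem:MLE-G-regression} writes each block $\hat{\Lambda}_k^{\mathcal{G}} = (\hat{\Sigma}_{\Pa(B_k)}^{(n)})^{-1}\hat{\Sigma}_{\Pa(B_k), B_k}^{(n)}$ as a smooth function of $\hat{\Sigma}^{(n)}$ wherever the relevant submatrix is invertible, which holds almost surely under the assumed sample size. The plug-in map of \cref{def:G-reg-plugin} further involves $(I - \hat{\Lambda}_{D,D}^{\mathcal{G}})^{-1}$; this is smooth at the truth because the partial causal ordering of buckets arranges $\Lambda_{D,D}$ into strict block-triangular form, hence $I - \Lambda_{D,D}$ is non-singular and the inversion is well-defined for large $n$. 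Consistency of $\hat{\tau}_w^{\mathcal{G}}$ under every $P \in \mathcal{P}_{\mathcal{G}}$ then follows from $\hat{\Sigma}^{(n)} \to_p \Sigma$, the continuous mapping theorem, and the identification formula of \cref{prop:id}.

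For the variance, the crux is that $\hat{\tau}_w^{\mathcal{G}} = \tau_w(\hat{\Lambda}^{\mathcal{G}})$ depends only on the $\mathcal{G}$-coordinates of $\Lambda$ (those entries allowed by the constraint $\lambda_{ij} = 0$ for $i \notin \Pa(B_l, \mathcal{G})$ with $j \in B_l$), and not on $\hat{\Omega}^{\mathcal{G}}$ at all. Its gradient with respect to $\Lambda_{k,\mathcal{G}}$ is precisely the $h_k$ appearing in \cref{lem:efficiency-bound}. Applying the delta method to \cref{lem:cov-G-regression}, whose limiting covariance is block-diagonal with $k$-th block $\Omega_k \otimes (\Sigma_{\Pa(B_k,\mathcal{G})})^{-1}$, yields
\begin{equation*}
\avar(\hat{\tau}_w^{\mathcal{G}}) = \sum_{k=2}^{K} h_k^{\T} \big[\Omega_k \otimes (\Sigma_{\Pa(B_k,\mathcal{G})})^{-1}\big] h_k,
\end{equation*}
which coincides exactly with the right-hand side of \cref{eqs:efficiency-bound}.

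I do not expect any step to pose a real obstacle: all the heavy lifting has been done in \cref{lem:cov-G-regression} (block-diagonal asymptotics of $\mathcal{G}$-regression) and \cref{lem:efficiency-bound} (the lower bound). The main thing to verify carefully is that the gradient $h_k$ used in the bound is the same object that appears when applying the delta method to the plug-in; this is immediate because the constraint set of the parameterization of $\Lambda$ is built from exactly the $\mathcal{G}$-coordinates, so there is no ambiguity between ``gradient of $\tau_w$ on the $\mathcal{G}$-submanifold'' and ``gradient of $\tau_w$ as a function of the regression coefficients of $\g$-regression''. Once the match is written down, it makes rigorous, under arbitrary error distributions, the MLE-based intuition developed in \cref{sec:MLE}.
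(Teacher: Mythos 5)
Your proposal is correct and follows essentially the same route as the paper: write $\hat{\tau}_w^{\mathcal{G}} = \tau_w((\hat{\Lambda}_k^{\mathcal{G}})_k)$, apply the delta method with the block-diagonal asymptotic covariance of \cref{lem:cov-G-regression}, and identify the gradient with the $h_k$ of \cref{lem:efficiency-bound}. Your extra care on membership in $\mathcal{T}_w$ (smoothness of the plug-in map and invertibility of $I - \hat{\Lambda}_{D,D}^{\mathcal{G}}$ via the strict block-triangular structure) only makes explicit what the paper asserts directly from \cref{def:G-reg-plugin}.
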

\begin{proof}
By \cref{def:G-reg-plugin}, $\hat{\tau}_{w}^{\mathcal{G}} \in \mathcal{T}_{w}$. Further, note that
\begin{equation*}
\hat{\tau}_{w}^{\mathcal{G}} = \tau_w\left((\hat{\Lambda}_{k}^{\mathcal{G}})_{k=2}^{K} \right),
\end{equation*}
where $(\hat{\Lambda}_{k}^{\mathcal{G}})_{k=2}^{K}$ are the $\mathcal{G}$-regression coefficients in \cref{eqs:G-reg}. Under any $P \in \mathcal{P}_{\mathcal{G}}$, we now verify that $\avar \hat{\tau}_{\mathcal{G}}$ matches the RHS of \cref{eqs:efficiency-bound}. By the delta method \citep[Sec 11.2]{shorack2000probability}, we have
\begin{equation*}
\begin{split}
\avar \hat{\tau}_{w}^{\mathcal{G}} &= \left (\partial \tau_{w} / \partial \vect\, (\Lambda_k)_{k=2}^K \right )^{\T} \acov \left\{\vect\, (\hat{\Lambda}_{k}^{\mathcal{G}})_{k=2}^{K} \right\} \left (\partial \tau_{w} / \partial \vect\, (\Lambda_k)_{k=2}^K \right ) \\
&\stackrel{\text{(i)}}{=} \sum_{k=2}^{K} \left (\partial \tau_{w} / \partial \vect \Lambda_{k} \right)^{\T} \acov \left\{\vect \hat{\Lambda}_{k}^{\g} \right\} \left (\partial \tau_{w} / \partial \vect \Lambda_{k} \right) \\
&\stackrel{\text{(ii)}}{=} \sum_{k=2}^{K} \left (\partial \tau_{w} / \partial \vect \Lambda_{k} \right)^{\T} \Omega_k \otimes \left(\Sigma_{\Pa(B_k,\mathcal{G})}\right)^{-1} \left (\partial \tau_{w} / \partial \vect \Lambda_{k} \right),
\end{split}
\end{equation*}
which equals the RHS of \cref{eqs:efficiency-bound}. The partial derivatives of $\tau_w(\cdot)$ are evaluated at $(\Lambda_k)_{k=2}^{K}$. Step (i) follows from the block-diagonal structure of the asymptotic covariance of $\hat{\Lambda}_{\mathcal{G}}$ given by \cref{lem:cov-G-regression}, and (ii) follows from the same lemma.
\end{proof}

Finally, we complete the proof of our main result.
\begin{proof}{of \cref{thm:optimal-efficiency}.}
Fix any $P \in \mathcal{P}_{\mathcal{G}}$. It suffices to show that for every $w \in \mathbb{R}^{|A|}$,
\begin{equation*}
w^{\T} \acov(\hat{\tau}_{AY}) w \geq w^{\T} \acov(\hat{\tau}_{AY}^{\mathcal{G}}) w,
\end{equation*}
or equivalently
\begin{equation*}
\avar \left(w^\T \hat{\tau}_{AY} \right) \geq \avar \left(w^{\T} \hat{\tau}_{AY}^{\mathcal{G}} \right).
\end{equation*}
This is true because for every $\hat{\tau}_{AY}$ in consideration, $\hat{\tau}_w := w^{\T} \hat{\tau}_{AY} \in \mathcal{T}_w$ and hence $\hat{\tau}_w$ is subject to the lower bound in \cref{lem:efficiency-bound}. Meanwhile, by \cref{lem:efficiency-G-plugin}, such a lower bound is achieved by $\hat{\tau}_w^{\mathcal{G}} = w^{\T} \hat{\tau}_{AY}^{\mathcal{G}}$. The proof is complete because the choice of $w$ is arbitrary.
\end{proof}

\begin{remark}
For \cref{thm:optimal-efficiency} to hold, the independence error assumption \cref{eqs:sem-error} of the underlying linear SEM \emph{cannot} be relaxed to uncorrelated errors. This comes from inspecting the proof of \cref{lem:cov-G-bar-regression} in \cref{apx:sec:asymp}. To show that the $\bar{\g}$-regression coefficients are asymptotically independent across buckets, the independence of errors is used to establish that for $2 \leq k < k' \leq K$, $j \in B_k$, $j' \in B_{k'}$, $\cov(\varepsilon_j X_{B_{[k-1]}}, \varepsilon_{j'} X_{B_{[k'-1]}}) = \bm{0}$. 

Suppose for now $\{\epsilon_i: i \in V\}$ are only uncorrelated and hence $\{\varepsilon_{B_k}: k=1,\dots,K\}$ are only uncorrelated across buckets. Further, suppose $B_1 = \{1\}, B_2=\{2\}, B_3=\{3\}$ with $j=k=2$ and $j'=k'=3$. Then, we have
\begin{equation*}
\begin{split}
\cov\left(\varepsilon_j X_{B_{[k-1]}}, \varepsilon_{j'} X_{B_{[k'-1]}}\right) &= \cov\left(\varepsilon_2 \varepsilon_1, \varepsilon_3(\varepsilon_1, \gamma_{12} \varepsilon_1 + \varepsilon_2)^\T\right) \\
&=\E[\varepsilon_1 \varepsilon_2 (\varepsilon_1 \varepsilon_3, \gamma_{12} \varepsilon_1 \varepsilon_3 + \varepsilon_2 \varepsilon_3)^{\T} ],
\end{split}
\end{equation*}
which may be non-zero.
\end{remark}
 \section{Numerical Results} \label{sec:numerical}
In this section, the finite-sample performance of $\mathcal{G}$-regression is evaluated against contending estimators. We use simulations and an \emph{in silico} data set for predicting expression levels in gene knockout experiments. All the numerical experiments were conducted with R v3.6, package \texttt{pcalg} v2.6 \citep{Kalisch:2012aa} and our package \texttt{eff2} v0.1.

\subsection{Simulations} \label{sec:simu}
We compare the performance of $\mathcal{G}$-regression to several contending estimators under finite samples. We roughly follow the simulation setup of \citet{henckel2019graphical,witte2020efficient}. First, we draw a random undirected graph from the Erd{\H{o}}s-R{\'e}nyi model with average degree $k$, where $k$ is drawn from $\{2,3,4,5\}$ uniformly at random. The graph is converted to a DAG $\mathcal{D}$ with a random causal ordering and the corresponding CPDAG $\g$ is recorded. Then we fix a linear SEM by drawing $\gamma_{ij}$ uniformly from $[-2, -0.1] \cup [0.1 ,2]$ and choosing the error distribution uniformly at random from the following:
\begin{enumerate}[itemsep=0mm,topsep=1ex,partopsep=.5ex,parsep=.5ex]
\item $\epsilon_i \sim \N(0, v_i)$ with $v_i \sim \unif(0.5, 6)$,
\item $\epsilon_i / \sqrt{v_i} \sim t_5$ with $v_i \sim \unif(0.5, 1.5)$,
\item $\epsilon_i \sim \text{logistic}(0, s_i)$ with $s_i \sim \unif(0.4, 0.7)$,
\item $\epsilon_i \sim \unif(-a_i, a_i)$ with $a_i \sim \unif(1.2, 2.1)$.
\end{enumerate}
We generate $n$ iid samples from the model. Treatments $A$ of a fixed size are randomly selected from the set of vertices with non-empty descendants, and $Y$ is selected randomly from their descendants; the drawing is repeated until $\tau_{AY}$ is identified from $\g$ according to the criterion of \cref{thm:id-criterion}. Finally, the data and graph $\g$ are provided to each estimator of $\tau_{AY}$. 

We compare to the following three estimators:
\begin{itemize}[itemsep=0mm,topsep=1ex,partopsep=.5ex,parsep=.5ex]
\item \texttt{adj.O}: optimal adjustment estimator \citep{henckel2019graphical},
\item \texttt{IDA.M}: joint-IDA estimator based on modifying Cholesky decompositions \citep{nandy2017estimating},
\item \texttt{IDA.R}: joint-IDA estimator based on recursive regressions \citep{nandy2017estimating}.
\end{itemize}
They are implemented in R package \texttt{pcalg}. The two joint-IDA estimators use the parents of treatment variables to estimate a causal effect. Both of them reduce to the IDA estimator of \citet{maathuis2009estimating} when $|A| = 1$. Admittedly, compared to $\g$-regression and \texttt{adj.O}, the joint-IDA estimators require less knowledge about the graph, namely only $\Pa(i)$ for each $i \in A$.

For each estimator $\hat{\tau}_{AY}$, we compute its squared error $\|\hat{\tau}_{AY} - \tau_{AY}\|_2^2$. Dividing $\|\hat{\tau}_{AY} - \tau_{AY}\|_2^2$ by the squared error of $\mathcal{G}$-regression, we obtain the \emph{relative squared error} of each contending estimator. We consider $|A| \in \{1,2,3,4\}$, $|V| \in \{20, 50, 100\}$ and $n \in \{100, 1000\}$; each configuration of $(|A|, |V|, n)$ is replicated 1,000 times. 

\cref{fig:simu-true-graph} shows the distributions of relative squared errors. In \cref{tab:simu-true-graph}, we summarize the relative errors with their geometric mean and median. Our estimator dominates all the contending estimators in all cases, and the improvement gets larger as $|A|$ gets bigger. Even though \texttt{adj.O} achieves the minimal asymptotic variance among all adjustment estimators, it can compare less favorably to our estimator by several folds. In general, the IDA estimators have very poor performances. Moreover, the results in \cref{tab:simu-true-graph} are computed only from the replications where a contending estimator exists. As mentioned in the Introduction, unlike $\mathcal{G}$-regression, none of the contending estimators is guaranteed to exist for every identified effect under joint intervention (\texttt{adj.O} always exists for point interventions); see \cref{tab:unidentified-true-graph} for the percentages of instances that are not estimable by contending estimators, even though the effect is identified by \cref{thm:id-criterion} and hence estimable by $\g$-regression.

\begin{table}[!htb]
\begin{center}
\caption{Geometric average (brackets: median) of relative squared errors compared to $\g$-regression} \label{tab:simu-true-graph}
\scriptsize
\begin{tabular}{lrlrlcrlrlcrlrl}
\toprule[1.2pt]
&\multicolumn{4}{c}{$|V|=20$}&&\multicolumn{4}{c}{$|V|=50$}&&\multicolumn{4}{c}{$|V|=100$}\\ 
\cline{2-5}\cline{7-10}\cline{12-15}
$|A|$ & \multicolumn{2}{c}{$n=100$} & \multicolumn{2}{c}{$n=1000$} && \multicolumn{2}{c}{$n=100$} & \multicolumn{2}{c}{$n=1000$} && \multicolumn{2}{c}{$n=100$} & \multicolumn{2}{c}{$n=1000$} \\ 
\midrule
\texttt{adj.O}&&&&&&&&\\ 
1&1.3 &(1.0)&1.3 &(1.0)&&1.4 &(1.0)&1.3 &(1.0)&&1.5 &(1.0)&1.5 &(1.0)\\ 
2&3.4 &(1.7)&4.2 &(2.0)&&4.7 &(2.6)&4.9 &(2.6)&&4.2 &(2.6)&4.5 &(2.7)\\ 
3&6.3 &(3.8)&5.9 &(3.4)&&7.4 &(4.9)&7.2 &(4.4)&&7.8 &(5.7)&8.0 &(5.2)\\ 
4&9.3 &(5.0)&9.3 &(5.8)&&12&(8.0)&14&(8.7)&&12&(8.6)&12&(8.9)\\ 
\texttt{IDA.M}&&&&&&&&\\ 
1&20 &(17)&19 &(16)&&61 &(57)&48 &(45)&&103 &(78)&108 &(90)\\ 
2&62 &(48)&65 &(51)&&220 &(153)&182 &(120)&&293 &(205)&356 &(272)\\ 
3&93 &(72)&119 &(108)&&354 &(249)&396 &(205)&&749 &(547)&771 &(604)\\ 
4&154 &(111)&222 &(147)&&533 &(448)&895 &(440)&&1188 &(752)&1604 &(1508)\\ 
\texttt{IDA.R}&&&&&&&&\\ 
1&20 &(17)&19 &(16)&&61 &(57)&48 &(45)&&103 &(78)&108 &(90)\\ 
2&33 &(29)&38 &(29)&&121 &(96)&113 &(89)&&176 &(140)&199 &(168)\\ 
3&30 &(22)&39 &(30)&&171 &(141)&135 &(125)&&342 &(281)&312 &(292)\\ 
4&48 &(34)&50 &(41)&&187 &(132)&214 &(143)&&405 &(391)&432 &(342)\\ 
\bottomrule[1.2pt]
\end{tabular}
\end{center}
\end{table}

\begin{figure}[!htb]
\centering
\includegraphics[width=1\textwidth]{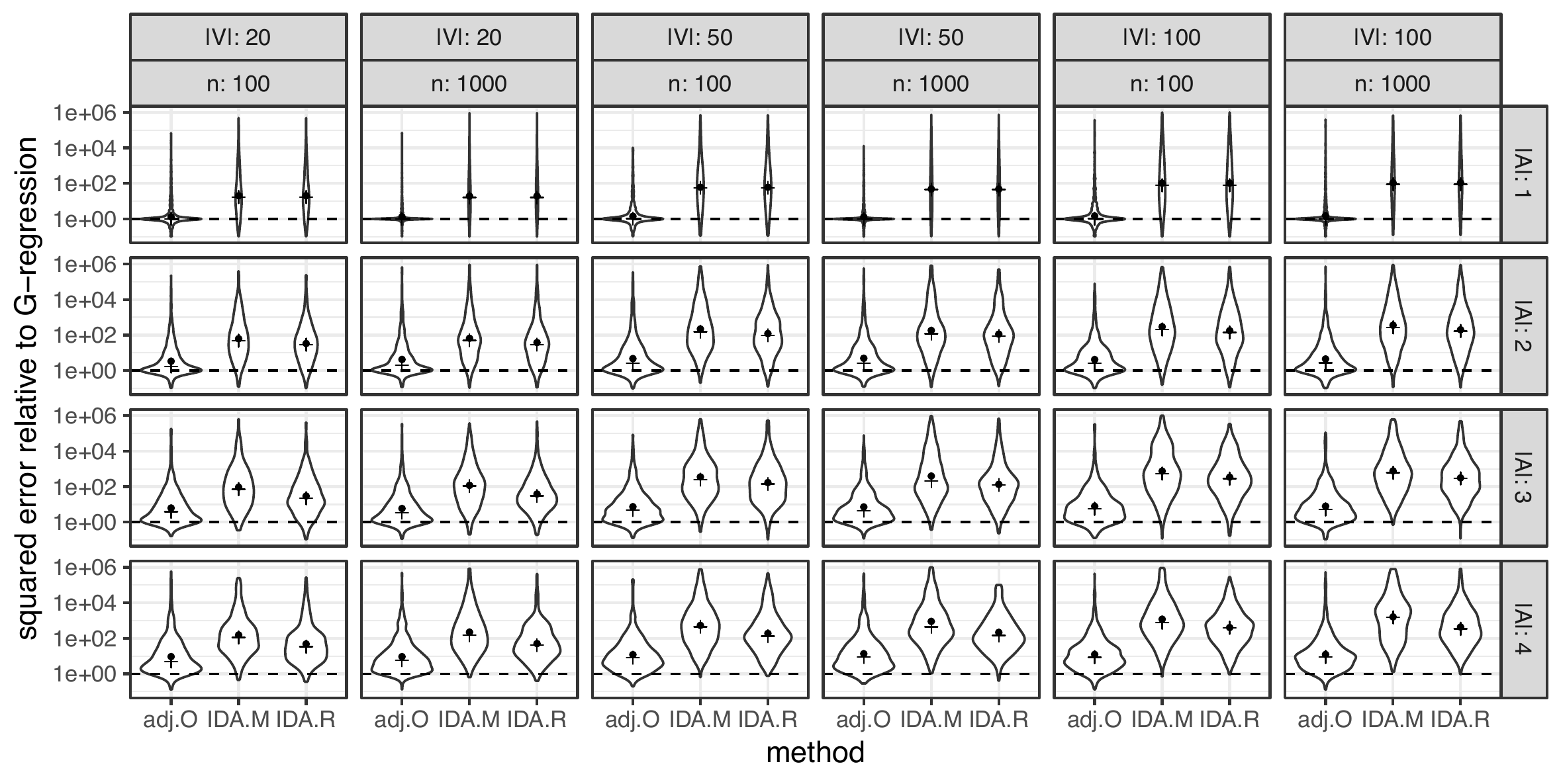}
\caption{Violin plots for the relative squared errors of contending estimators (`$\cdot$': geometric mean, `$+$': median).}
\label{fig:simu-true-graph}
\end{figure}

\begin{table}[!htb]
\begin{center}
\caption{Percentage of identified instances not estimable using contending estimators\\(all estimable with $\g$-regression)} \label{tab:unidentified-true-graph}
\small
\begin{tabular}{ccrrr}
\toprule[1.2pt]
Estimator & $|A|$ & $|V| = 20$ & $|V| = 50$ & $ |V| = 100$\\ 
\midrule
\multirow{4}{*}{\texttt{adj.O}} & 1&0\%&0\%&0\%\\ 
&2&17\%&10\%&5\%\\ 
&3&30\%&18\%&15\%\\ 
&4&36\%&29\%&22\%\\ 
\midrule
\multirow{4}{*}{\texttt{IDA.M}} & 1&29\%&32\%&32\%\\ 
&2&47\%&51\%&50\%\\ 
&3&61\%&59\%&63\%\\ 
&4&72\%&69\%&71\%\\  
\midrule
\multirow{4}{*}{\texttt{IDA.R}} & 1&29\%&32\%&32\%\\ 
&2&47\%&51\%&50\%\\ 
&3&61\%&59\%&63\%\\ 
&4&72\%&69\%&71\%\\ 
\bottomrule[1.2pt]
\end{tabular}
\end{center}
\end{table}

In \cref{apx:sec:simu}, we report two sets of additional simulation results: (1) The CPDAG is estimated with the greedy equivalence search algorithm \citep{chickering2002optimal} and provided to the estimators. The improvements are more modest but are still typically by several folds. (2) The data generating mechanism in this section is modified by rescaling the coefficients such that the variance of a vertex does not grow with the topological ordering. 

\subsection{Predicting double knockouts in DREAM4 data}
The DREAM4 \emph{in silico} network challenge data set \citep{marbach2009generating} provides a benchmark for evaluating the reverse engineering of gene regulation networks. Here we use the 5th \emph{Size10} data set \citep{marbach2009dream4} as our example, which is a small network of 10 genes. \cref{fig:DREAM4} shows the true gene regulation network, which is constructed based on the networks of living organisms. A stochastic differential equation model was used to generate the data under wild type (steady state), perturbed steady state and knockout interventions. A task in the challenge is to use data under wild type and perturbed steady state (both are observational data) to predict the steady state expression levels under 5 different joint interventions, each of which knocks out a pair of genes. For our purpose, we also use the true network as input. However, the true network contains one cycle (other networks in DREAM4  contain more than one cycles). In the following, we remove one edge in the cycle and provide the resulting DAG to the estimators. Necessarily, the causal DAG is misspecified. Results are reported under 4 different edge removals. 

\begin{figure}[!h]
\centering
\includegraphics[width=0.5\textwidth]{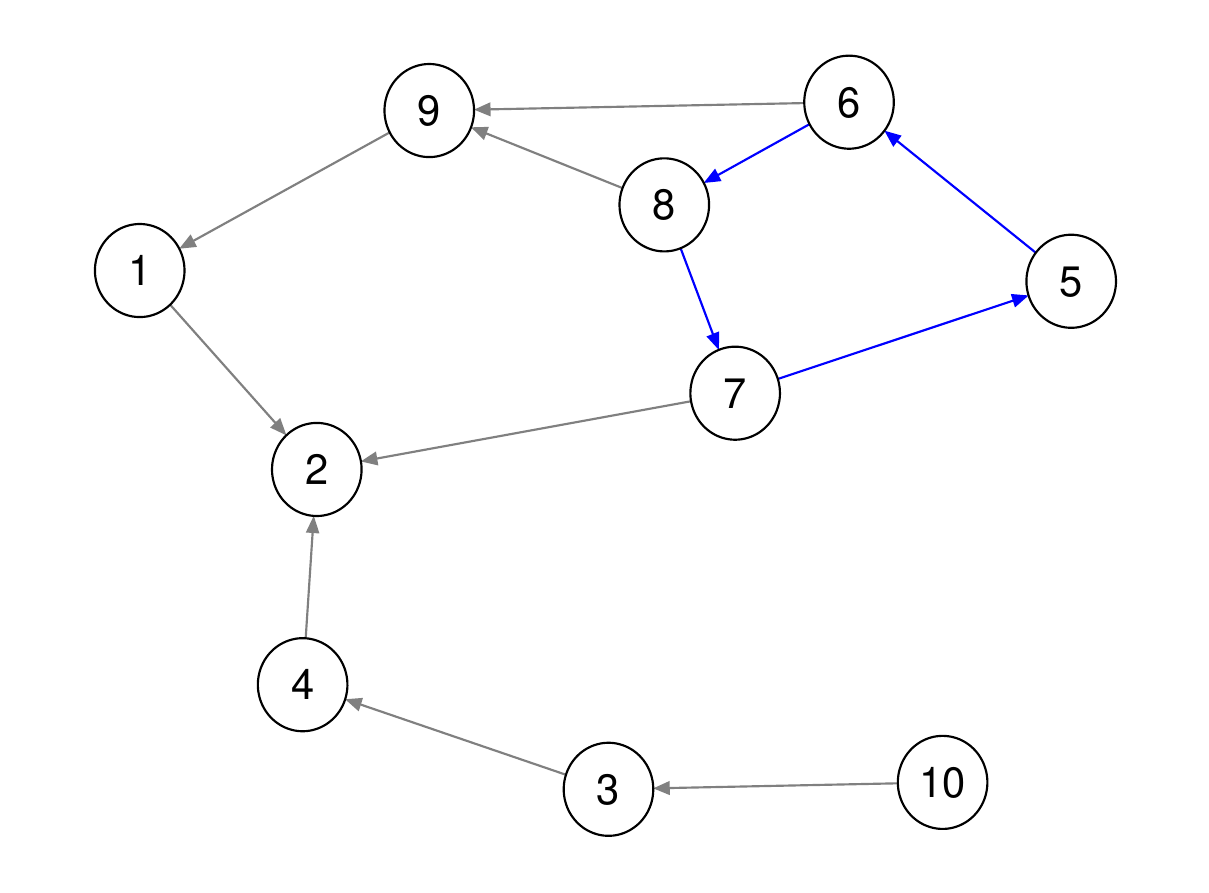}
\caption{Gene regulation network from DREAM4 data set, which contains a cycle (blue).} 
\label{fig:DREAM4}
\end{figure}

Unfortunately, the wild type data only consists of one sample. To estimate the observational covariance, we use the perturbed steady state data, which consists of 5 segments of time series. A sample covariance is computed from each segment, and the final estimate is taken as their average. For a double knockout of genes $(i, j)$, we use $\mathcal{G}$-regression to estimate the joint-intervention effect of $A = (i, j)$ on every other gene. The effect is identified because the DAG is given. For gene $k$, let $s_k$ and $s_k^{(ij)}$ respectively denote its expression level under wild type and double knockout of genes $(i,j)$. The expression level under double knockout is predicted as
\begin{equation*}
\hat{s}_k^{(ij)} = \begin{cases} s_k - (s_i, s_j)^{\T} \hat{\tau}_{ij, k}, \quad& k \notin \{i,j\} \\
0, \quad& k \in \{i,j\} \end{cases}.
\end{equation*}
The performance is evaluated with normalized squared error
\begin{equation*}
\mathcal{E} = \frac{\sum_{(i,j) \in \mathcal{A}} \sum_{k=1}^{10} (\hat{s}_k^{(ij)} - s_k^{(ij)})^2}{\sum_{(i,j) \in \mathcal{A}} \sum_{k=1}^{10} (s_k^{(ij)})^2},
\end{equation*}
where $\mathcal{A} = \{(6,8), (7,8), (8,10), (8,5), (8,9)\}$ consists of 5 double knockouts available in the data set. For comparison, we also evaluate the performance of \texttt{adj.O} (optimal adjustment, \citet{henckel2019graphical}) and \texttt{IDA.R} (joint-IDA based on recursive regressions, \citet{nandy2017estimating}); \texttt{IDA.R} is chosen because it outperforms \texttt{IDA.M} according to \cref{sec:simu}. Unfortunately, \texttt{adj.O} is not able to estimate the effect on every $k$ and a modified metric $\mathcal{E}^{\ast}$ is computed by only summing over those estimable $k$'s; the same metric $\mathcal{E}^{\ast}$ of $\mathcal{G}$-regression is also computed for comparison. As a baseline, we also compute $\mathcal{E}$ from naively estimating $s_k^{(ij)}$ with just $s_k$.

\begin{table}[!ht]
\begin{center}
\caption{Normalized squared errors of predicting gene double knockouts} \label{tab:DREAM4}
\begin{tabular}{@{}cccccccc@{}}
\toprule
edge removed & & \multicolumn{2}{c}{$\mathcal{E}^{\ast}$} & & \multicolumn{3}{c}{$\mathcal{E}$} \\
\cline{3-4}\cline{6-8}
from cycle      & $\nexists$ \texttt{adj.O} & \texttt{adj.O}  & $\mathcal{G}$-\texttt{reg} &  & \texttt{IDA.R} & $\mathcal{G}$-\texttt{reg} & \texttt{baseline}  \\ \midrule
$5 \rightarrow 6$ & 36\%      & 43\% & \textbf{35\%}  &  & 46\% & \textbf{30\%} & 81\% \\
$6 \rightarrow 8$ & 42\%      & \textbf{29\%} & 32\%  &  & 33\% & \textbf{26\%} & 81\% \\
$8 \rightarrow 7$ & 60\%      & 39\% & \textbf{35\%}  &  & 45\% & \textbf{44\%} & 81\% \\
$7 \rightarrow 5$ & 46\%      & 40\% & \textbf{33\%}  &  & 45\% & \textbf{34\%} & 81\% \\ \bottomrule
\end{tabular}
\end{center}
\end{table}

\cref{tab:DREAM4} reports the results, where the column `$\nexists$ \texttt{adj.O}' lists the percentage of effects not estimable by the adjustment estimator. In almost all the cases, $\mathcal{G}$-regression dominates all the contending estimators. In this example, even though both the causal graph and the linear SEM are misspecified, one can still witness some usefulness of our estimator. 
 \section{Discussion}
We have proposed $\mathcal{G}$-regression based on recursive least squares to estimate a total causal effect from observational data, under linearity and causal sufficiency assumuptions. $\mathcal{G}$-regression is applicable to estimating every identified total effect, under either point intervention or joint intervention. Further, via a new semiparametric efficiency theory, we have shown that the estimator achieves the efficiency bound within a restricted, yet reasonably large, class of estimators, including covariate adjustment and other regular estimators based on the sample covariance. Note that the restriction on the class of estimators is motivated by computational simplicity and numerical stability as mentioned in the Introduction. To construct confidence intervals and conduct hypothesis tests, bootstrap can be easily applied to estimate the asymptotic covariance of our estimator. This is implemented in R package \texttt{eff2}.  

\begin{figure}[!htb]
\centering
\begin{tikzpicture}[->,>=triangle 45,shorten >=1pt,
auto,thick,
main node/.style={circle,inner
    sep=1.5pt,fill=gray!20,draw,font=\sffamily}]
\begin{scope} \node[main node] (A) {$A$}; 
\node[main node] (O1) [left=0.5cm of A, yshift=1.2cm] {$O_1$}; 
\node[main node] (O2) [left=0.5cm of A, yshift=-1.2cm] {$O_2$}; 
\node[main node] (Y)  [right=1.2cm of A] {$Y$}; 
\node[below=1.5cm of A] (cap) {(a)}; 
\draw[color=black!20!blue,->] (A) to (Y);
\draw[color=black!20!blue,->] (O1) to (A);
\draw[color=black!20!blue,->] (O1) to (Y);
\draw[color=black!20!blue,->] (O2) to (A);
\draw[color=black!20!blue,->] (O2) to (Y);
\end{scope}
\begin{scope}[xshift=6cm] \node[main node] (A) {$A$}; 
\node[main node] (O1) [left=0.5cm of A, yshift=1.2cm] {$O_1$}; 
\node[main node] (O2) [left=0.5cm of A, yshift=-1.2cm] {$O_2$}; 
\node[main node] (Y)  [right=1.2cm of A] {$Y$}; 
\node[below=1.5cm of A] (cap) {(b)}; 
\draw[color=black!20!blue,->] (A) to (Y);
\draw[color=black!20!blue,->] (O1) to (A);
\draw[color=black!20!blue,->] (O1) to (Y);
\draw[color=black!20!blue,->] (O2) to (A);
\draw[color=black!20!blue,->] (O2) to (Y);
\draw[color=black!20!blue,-] (O1) to (O2);
\end{scope}
\end{tikzpicture}
\caption{(a) and (b) lead to the same $\g$-regression estimator of $\tau_{AY}$. The independence between $O_1$ and $O_2$ in (a) is dropped in (b).}
\label{fig:ignore-indep}
\end{figure}
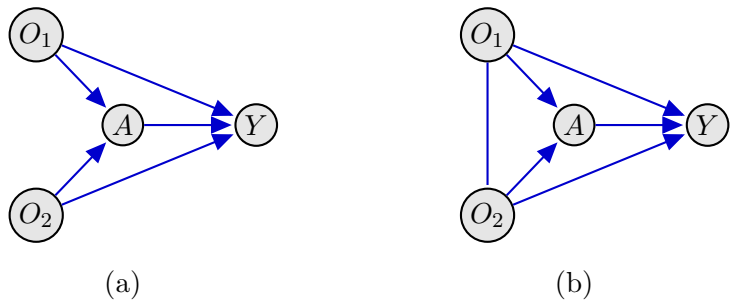

We conclude with a remark. We have seen that the conditional independence constraints in \cref{eqs:block-error-markov} play no role for the restricted class of estimators considered --- a feature that holds under the restrictive property (\cref{cor:restrictive}). For example, the marginal independence between $O_1$ and $O_2$ in \cref{fig:ignore-indep}(a) can be ignored without changing the $\g$-regression estimator of $\tau_{AY}$. However, this is no longer true when linearity is dropped. In terms of nonparametric causal graphical models, the asymptotic relative efficiency resulting from ignoring $O_1 \indep O_2$ can be arbitrarily large; see \citet[Lemma 23]{rotnitzky2019efficient}.
 \acks{Authors thank Thomas Richardson for valuable comments and discussions. The first author was supported by ONR Grant N00014-19-1-2446.}
\newpage
\appendix 
\crefalias{section}{appendix}
\section{Proofs for asymptotic efficiency} \label{apx:sec:asymp}
\begin{figure}[!htb]
\begin{center}
\begin{tikzpicture}[baseline= (a).base]
\node[scale=0.8] (a) at (0,0){
\begin{tikzcd}
                                      & \mathrm{\cref{lem:RAL}} \arrow[r] \arrow[rd] & \mathrm{\cref{lem:cov-G-regression}} \arrow[r]  & \mathrm{~~~~\cref{lem:efficiency-G-plugin}} \arrow[rd] &                \\
                                      &                          & \mathrm{\cref{lem:cov-G-bar-regression}} \arrow[r]  & \mathrm{~~~~\cref{lem:efficiency-bound}} \arrow[r]  & \mathrm{~~~~\cref{thm:optimal-efficiency}}            \\
\mathrm{\cref{lem:diffeo}} \arrow[ruu] \arrow[r] \arrow[rr, bend right] & \mathrm{~~~~\cref{lem:estimator-repr}} \arrow[r] \arrow[rr, bend right]    \arrow[rru]        & \mathrm{~~~~~\cref{lem:consistency}} \arrow[r] & \mathrm{~~~~~~~\cref{cor:gradient-conditions}} \arrow[u]  & \mathrm{\cref{lem:quadratic-form}} \arrow[lu]
\end{tikzcd}
};
\end{tikzpicture}
\caption{Dependency structure of proofs in \cref{sec:efficiency} of main text.}
\label{apx:fig:proof-diagram-eff}
\end{center}
\end{figure}
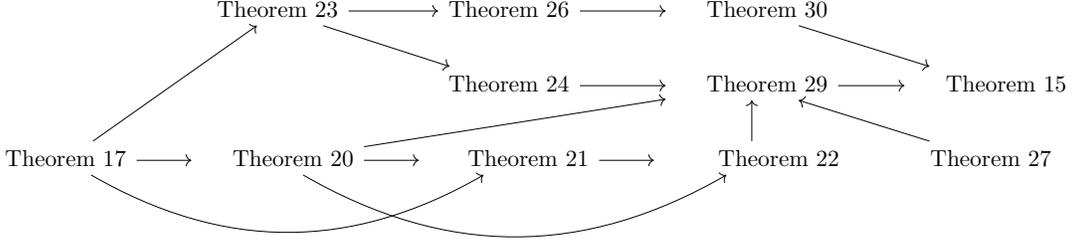

\subsection{Proof of \cref{lem:RAL}}
\begin{proof}
For simplicity, we drop the superscripts in $\hat{\Sigma}^{(n)}$ and $\hat{\lambda}^{(n)}$. Since $j \in B_k$ and $\Pa(B_k, \g) \subseteq C \subseteq \Pa(B_k, \bar{\g})$, we have
\begin{equation*}
\begin{split}
\hat{\lambda}_{C, j} - \lambda_{C, j} &= (\hat{\Sigma}_{C})^{-1} \hat{\Sigma}_{C, j} - (\Sigma_{C})^{-1} \Sigma_{C, j} \\
&= \left(\Sigma_{C} + \hat{\Sigma}_{C} - \Sigma_{C} \right)^{-1} \left(\hat{\Sigma}_{C,j} - \Sigma_{C,j} \right) + \left((\hat{\Sigma}_{C})^{-1} - (\Sigma_{C})^{-1} \right) \Sigma_{C, j}.
\end{split}
\end{equation*}
We compute the two terms separately. The first term becomes
\begin{equation*}
\begin{split}
\left(\Sigma_{C} + \hat{\Sigma}_{C} - \Sigma_{C} \right)^{-1} \left(\hat{\Sigma}_{C,j} - \Sigma_{C,j} \right) &= \left(\Sigma_{C} + O_p(n^{-1/2})\right)^{-1}\left (\hat{\Sigma}_{C,j} - \Sigma_{C,j} \right)\\
&= (\Sigma_{C})^{-1} \left(\hat{\Sigma}_{C,j} - \Sigma_{C,j} \right) + O_p(n^{-1}),
\end{split} 
\end{equation*}
where we used the fact that $\Sigma_{C}$ is positive definite (\cref{lem:diffeo}) and $\|\hat{\Sigma}_{C,j} - \Sigma_{C,j}\| = O_{p}(n^{-1/2})$, $\|\hat{\Sigma}_C - \Sigma_C\|_2 = O_p(n^{-1/2})$ by the central limit theorem.

In the second term, 
\begin{equation*}
\begin{split}
(\hat{\Sigma}_{C})^{-1} - (\Sigma_{C})^{-1} &= \left ( \Sigma_{C} + \hat{\Sigma}_{C} - \Sigma_{C} \right )^{-1} - (\Sigma_{C})^{-1} \\
&= \left[ I - \left(I - (\Sigma_{C})^{-1} \hat{\Sigma}_{C} \right)\right]^{-1} (\Sigma_{C})^{-1} - (\Sigma_{C})^{-1}.
\end{split}
\end{equation*}
Since $\|I - (\Sigma_{C})^{-1}\hat{\Sigma}_{C}\|_2 = O_{p}(n^{-1/2})$, using Neumann series $(I - H)^{-1} = I + H + H^2 + \dots$ for $H = I - (\Sigma_{C})^{-1}\hat{\Sigma}_{C}$ with $\|H\|_2 \rightarrow_{p} 0 < 1$, we have
\begin{equation*}
\begin{split}
(\hat{\Sigma}_{C})^{-1} - (\Sigma_{C})^{-1} &= \left[I + H + O_p(n^{-1})\right] (\Sigma_C)^{-1} - (\Sigma_C)^{-1} \\
&= H (\Sigma_C)^{-1} + O_p(n^{-1}) \\
&= \left [ I - (\Sigma_{C})^{-1} \hat{\Sigma}_{C}\right] (\Sigma_{C})^{-1} + O_{p}(n^{-1}).
\end{split}
\end{equation*}
Combining the two terms, we obtain
\begin{equation*}
\begin{split}
\hat{\lambda}_{C, j} - \lambda_{C, j} &= (\Sigma_{C})^{-1} \left(\hat{\Sigma}_{C,j} - \Sigma_{C,j} \right) +  \left [I - (\Sigma_{C})^{-1} \hat{\Sigma}_{C}\right] (\Sigma_{C})^{-1} \Sigma_{C,j} + O_{p}(n^{-1}) \\
&= (\Sigma_C)^{-1} \hat{\Sigma}_{C,j} - (\Sigma_C)^{-1} \Sigma_{C,j} + (\Sigma_C)^{-1} \Sigma_{C,j} - (\Sigma_C)^{-1} \hat{\Sigma}_C (\Sigma_C)^{-1} \Sigma_{C,j} + O_p(n^{-1}) \\
&\stackrel{\text{(i)}}{=} (\Sigma_{C})^{-1} \left(\hat{\Sigma}_{C,j} - \hat{\Sigma}_{C} \lambda_{C,j} \right) + O_{p}(n^{-1}) \\
&= \frac{1}{n} \sum_{i=1}^{n} (\Sigma_{C})^{-1} \left [ X_j^{(i)}X_{C}^{(i)} - X_{C}^{(i)} X_{C}^{(i) \T} \lambda_{C,j} \right] + O_{p}(n^{-1}) \\
&= \frac{1}{n} \sum_{i=1}^{n} (\Sigma_{C})^{-1} X_{C}^{(i)} \left(X_j^{(i)} - \lambda_{C,j}^{\T} X_{C}^{(i)} \right)  + O_{p}(n^{-1}) \\
&\stackrel{\text{(ii)}}{=} \frac{1}{n} \sum_{i=1}^{n} (\Sigma_{C})^{-1} X_{C}^{(i)} \left(X_j^{(i)} - \lambda_{\Pa(B_k,\g),j}^{\T} X_{\Pa(B_k,\g)}^{(i)} \right)  + O_{p}(n^{-1}) \\
&= \frac{1}{n} \sum_{i=1}^{n} (\Sigma_{C})^{-1} X_{C}^{(i)} \varepsilon_{j}^{(i)} + O_{p}(n^{-1}),
\end{split}
\end{equation*}
where (i) uses $\lambda_{C,j} = (\Sigma_C)^{-1} \Sigma_{C,j}$ and (ii) follows from \cref{prop:block-recursive} and $\Pa(B_k, \g) \subseteq C \subseteq \Pa(B_k, \bar{\g})$.
\end{proof}

\subsection{Proof of \cref{lem:cov-G-bar-regression}}
\begin{proof}
For each $k=2,\dots,K$, note that for $C = \Pa(B_k, \bar{\mathcal{G}}) = B_{[k-1]}$, $\vect \hat{\Lambda}_k^{\bar{\g}} = (\hat{\lambda}^{(n)}_{C,j})_{j \in B_k}$ by concatenation. By \cref{lem:RAL}, we have the following asymptotic linear expansion 
\begin{equation} \label{eqs:expansion-G-bar}
\hat{\lambda}_{B_{[k-1]},j}^{(n)} - \lambda_{B_{[k-1]}, j} = \frac{1}{n} \sum_{i=1}^{n} \left(\Sigma_{B_{[k-1]}} \right)^{-1} X_{B_{[k-1]}}^{(i)} \varepsilon_{j}^{(i)} + O_{p}(n^{-1}).
\end{equation}
By the central limit theorem, 
\begin{equation*}
\sqrt{n} \begin{pmatrix} \vect (\hat{\Lambda}_{2}^{\bar{\g}} - \Lambda_{2}) \\
\vdots \\
\vect (\hat{\Lambda}_{K}^{\bar{\g}} - \Lambda_{K})
\end{pmatrix}
\end{equation*}
converges to a centered multivariate normal distribution. Further, we claim that the asymptotic covariance must be block-diagonal according to $k=2,\dots,K$. To see this, take $k < k'$, $j \in B_k$, $j' \in B_{k'}$ and let $C = B_{[k-1]}$, $C' = B_{[k'-1]}$. Using \cref{eqs:expansion-G-bar}, we have
\begin{equation*}
\begin{split}
& \quad \lim_{n \rightarrow \infty} n^{-1} \E  \left(\hat{\lambda}_{C,j}^{(n)} - \lambda_{C,j}\right) \left(\hat{\lambda}_{C',j}^{(n)} - \lambda_{C',j}\right)^{\T}\\
&= (\Sigma_{C})^{-1} \cov(\varepsilon_{j} X_{C}, \varepsilon_{j'} X_{C'}) (\Sigma_{C'})^{-1} \\
&= (\Sigma_{C})^{-1} \left\{ \E \left[\varepsilon_{j} \varepsilon_{j'} X_{C} X_{C'}^{\T} \right] - \E \left[\varepsilon_j X_{C}\right] \E\left[\varepsilon_{j'} X_{C'}^{\T} \right] \right \} (\Sigma_{C'})^{-1}.
\end{split}
\end{equation*}
In the expression above, because $\varepsilon_{B_k} \indep X_{B_{[k-1]}}$ and $\varepsilon_{B_{k'}} \indep X_{B_{[k'-1]}}$ by \cref{cor:block-recursive} and $j \in B_k$, $j' \in B_{k'}$ for $k < k'$, we have $ \E \left[\varepsilon_{j} \varepsilon_{j'} X_{C} X_{C'}^{\T} \right] = \E \varepsilon_{j'} \E\left[\varepsilon_{j}  X_{C} X_{C'}^{\T} \right] = \bm{0}$, $\E\varepsilon_j X_{C} = \bm{0}$ and $ \E\varepsilon_{j'} X_{C'} = \bm{0}$. It follows that the display above evaluates to $\bm{0}$ and hence the asymptotic covariance matrix is block-diagonal. 

It remains to be shown that $\acov \vect (\hat{\Lambda}_{k}^{\bar{\g}} - \Lambda_{k}) = \Omega_k \otimes (\Sigma_{B_{[k-1]}})^{-1}$ for $k=2,\dots,K$. Fix $k$, take any two distinct $j, j' \in B_{k}$ and let $C=B_{[k-1]}$. Again using \cref{eqs:expansion-G-bar}, we have
\begin{equation*}
\acov \begin{pmatrix} \hat{\lambda}^{(n)}_{C,j}\\ 
\hat{\lambda}^{(n)}_{C,j'} \end{pmatrix} = \begin{pmatrix} H & F \\
F^{\T} & D
\end{pmatrix},
\end{equation*}
where
\begin{equation*}
\begin{split}
H &= (\Sigma_{C})^{-1} \cov(\varepsilon_j X_C, \varepsilon_j X_C) (\Sigma_{C})^{-1} = \var(\varepsilon_j) (\Sigma_{B_{[k-1]}})^{-1}, \\
F &= (\Sigma_{C})^{-1} \cov(\varepsilon_j X_C, \varepsilon_{j'} X_C) (\Sigma_{C})^{-1} = \cov(\varepsilon_j, \varepsilon_{j'}) (\Sigma_{B_{[k-1]}})^{-1}, \\
D &= (\Sigma_{C})^{-1} \cov(\varepsilon_{j'} X_C, \varepsilon_{j'} X_C) (\Sigma_{C})^{-1} = \var(\varepsilon_j') (\Sigma_{B_{[k-1]}})^{-1}.
\end{split}
\end{equation*}
Noting that $\Omega_k = \cov(\varepsilon_{B_k})$ and $\vect \hat{\Lambda}_k^{\bar{\g}} = (\hat{\lambda}^{(n)}_{C,j})_{j \in B_k}$, the result then follows from comparing the expressions above to the definition of Kronecker product for every pair $j, j' \in B_{k}$.
\end{proof}

\subsection{Proof of \cref{lem:cov-G-regression}}
\begin{proof}
Note that by the restrictive property of $\mathcal{G}$ (\cref{cor:restrictive}), we have $\vect \hat{\Lambda}_{k}^{\mathcal{G}} = \left(\hat{\lambda}^{(n)}_{\Pa(B_k, \g),j} \right)_{j \in B_k}$ for $k=2,\dots,K$. Using \cref{lem:RAL} with $C=\Pa(B_k, \g)$, we have the following asymptotic linear expansion 
\begin{equation} \label{eqs:expansion-G-reg}
\hat{\lambda}_{\Pa(B_k, \g),j}^{(n)} - \lambda_{\Pa(B_k, \g), j} = \frac{1}{n} \sum_{i=1}^{n} \left(\Sigma_{\Pa(B_k, \g)} \right)^{-1} X_{\Pa(B_k, \g)}^{(i)} \varepsilon_{j}^{(i)} + O_{p}(n^{-1}).
\end{equation}
The rest of computation follows similarly to the proof of \cref{lem:cov-G-bar-regression}.
\end{proof}

\subsection{Proof of \cref{lem:quadratic-form}}
\begin{proof}
Since $S \in \PD{n}$, by completing the square, we have
\begin{equation*}
\begin{split}
x^{\T} S x &= x_{A}^{\T} S_{A,A} x_A + x_B^{\T} S_{B,A} x_A + x_A^{\T} S_{A,B} x_B + x_B^{\T} S_{B,B} x_B \\
& \qquad - x_A^{\T} S_{A,B} S_{B,B}^{-1} S_{B,A} x_A  + x_A^{\T} S_{A,B} S_{B,B}^{-1} S_{B,A} x_A \\
&=  x_{A}^{\T} (S_{A,A} - S_{A,B} S_{B,B}^{-1} S_{B,A}) x_{A} + (x_B + S_{B,B}^{-1} S_{B,A} x_{A})^{\T} S_{B,B} (x_B + S_{B,B}^{-1} S_{B,A} x_{A}) \\
& \geq x_{A}^{\T} (S_{A,A} - S_{A,B} S_{B,B}^{-1} S_{B,A}) x_{A} = x_{A}^{\T} S_{A \cdot B} x_{A},
\end{split}
\end{equation*}
where the equality holds if and only if  $x_{B} = -S_{B,B}^{-1} S_{B,A} x_{A}$.
\end{proof}
\section{Proofs for graphical results} \label{apx:sec:graph-proofs}
\subsection{Proof of \cref{lemma:anc-bucket21}}
\begin{proof} 
Let the undirected path between $j$ and $k$ be $p = \langle j = V_1, \dots, V_l = k \rangle$ with $l > 1$. First note that $i$ is not on $p$ because there is no undirected path between $i$ and $j$ in $\mathcal{G}$. 

Further, since $i \rightarrow j - V_2$ is in $\mathcal{G}$, by Meek rules R1 and R2 (\cref{fig:orientationRules} in \cref{apx:sec:graph}), $i - V_2$ or $i \rightarrow V_2$ is in $\mathcal{G}$. Since, by assumption, there is no undirected path from $i$ to $j$ in $\mathcal{G}$, $i - V_2 \notin U$. Hence, $i \rightarrow V_2 \in E$  and if $l=2$, the statement of the lemma holds. If $l>2$, we can apply the above reasoning iteratively until we obtain $i \rightarrow V_l  \in E$.
\end{proof}

\subsection{Proof of \cref{lem:pa-D}}
\begin{proof} 
Let $l \in D_k$. Since $D_k \subseteq B_k$, $l \in B_k$. Then by \cref{cor:restrictive}, we have that $\Pa(B_k) = \Pa(l) \setminus B_k$. Therefore, $\Pa(B_k) \subseteq \cup_{j \in D_k} \Pa(j) \setminus B_k$ and furthermore, $\Pa(B_k) \subseteq \cup_{j \in D_k} \Pa(j) \setminus D_k = \Pa(D_k)$.
Hence, it suffices to show $\Pa(D_k) \subseteq \Pa(B_k)$.

We prove $\Pa(D_k) \subseteq \Pa(B_k)$ by contradiction. Suppose there exists $j \in \Pa(D_k) \setminus \Pa(B_k)$. We know that $\Pa(D_k) =  \cup_{l \in D_k} \Pa(l) \setminus D_k$ and since $D_k \subseteq B_k$, for any $l \in D_k$, $l \in B_k$. Using \cref{cor:restrictive}, we also know that $\Pa(B_k) =\Pa(l) \setminus B_k$ for every $l \in D_k$. Therefore, if $j \in \Pa(D_k) \setminus \Pa(B_k)$, it follows that $j \in B_k$.

Additionally, by definition $D = \An(Y, \g_{V \setminus A})$ and 
 $D = \cup_{r=1}^K D_r$. Therefore, if $k=1$, then $j \in A$; if $k >1$, $j$ must be contained in $\cup_{r=1}^{k-1} D_{r}$ or in $A$. If $j \in A$, then because condition of \cref{thm:id-criterion} holds, $j \in A \cap B_{k}$  leads to a contradiction with \cref{lemma:big2} in \cref{apx:sec:graph}. 
Suppose $k >1$ and $j \in  \cup_{r=1}^{k-1} D_{r}$. Because $\cup_{r=1}^{k-1} D_{r} \subseteq \cup_{r=1}^{k-1} B_r $ and buckets $\{B_1, \dots, B_K\}$ are disjoint, we have $(\cup_{r=1}^{k-1} D_{r} ) \cap B_k = \emptyset$. However, this contradicts that $j \in B_k$.
\end{proof}

\subsection{Proof of \cref{prop:satmpdag}}
\begin{proof}
By construction, the undirected component of $\bar{\mathcal{G}}$ remains the same as that of $\mathcal{G}$. Hence, $\bar{\mathcal{G}}$ has the same bucket decomposition as $\mathcal{G}$. We only need to show that $\bar{\g}$ is an MPDAG. It is enough to show that the edge orientations in $\bar{\g}$ are closed under rules R1--R4 of \citet{meek1995causal} that are displayed in  \cref{fig:orientationRules} of \cref{apx:sec:graph}.  
Note that since $\g$ is an MPDAG it is closed under R1--R4.   So if any of the left-hand-side graphs in Figure \ref{fig:orientationRules} are induced subgraphs of $\bar{\g}$, then at least one of the directed edges in these induced subgraphs must have been added in the construction of $\bar{\g}$.

Since the construction of $\bar{\g}$ does not involve adding directed edges within a bucket, the left-hand-side of rules R3 and R4 in  Figure \ref{fig:orientationRules} cannot appear as induced subgraphs of $\bar{\g}$. Hence, edge orientations in  $\bar{\g}$ are complete under rules R3 and R4.

Consider the left-hand-side of rule R1 in Figure \ref{fig:orientationRules}, $A \rightarrow B - C$, for some $A,B,C \in V$. For $A \rightarrow B-C$ to be an induced subgraph of $\bar{\g}$, $A \rightarrow B$ must have been added in the construction of $\bar{\g}$ from $\g$. Hence, $A $ and $B$ would need to be in different buckets in $V$ in $\g$. Since $B$ and $C$ are in the same bucket because of edge $B-C$, $A \rightarrow C$ would also be added to $\g$ in the construction of $\bar{\g}$. Hence, $A\rightarrow B - C$  will also not appear as an induced subgraph of $\bar{\g}$ and edge orientations in  $\bar{\g}$ are also closed under R1.

Consider the left-hand-side of R2 in Figure \ref{fig:orientationRules}, and suppose for a contradiction that $A \rightarrow B \rightarrow C $ and $A - C$ is an induced subgraph of $\bar{\g}$ for some $A, B, C \in V$.  Then $A \rightarrow B$, $B \rightarrow C$, or both $A \rightarrow B$ and $B \rightarrow C$, were added to $\g$ in the construction of $\bar{\g}$.
Because of $A- C$, suppose $A$ and $C$ are in the same bucket $B_i$ for some $i \in \{1, \dots , K\}$ in $\g$. Also, suppose $B \in B_j$. Because only directed edges between buckets are added, $i \neq j$. 

Now, $A \rightarrow B$ and $B \rightarrow C$ cannot be both added to $\g$ to construct $\bar{\g}$, because that would imply that $i < j$ and $j < i$. By R1, $B \rightarrow C - A$ cannot be an induced subgraph of MPDAG $\g$, so $A \rightarrow B$ alone also could not be added to $\g$. Therefore, $B \rightarrow C$ alone was added to $\g$.
But $C - A \rightarrow B$ is an induced subgraph of $\g$, so $ i < j$, which contradicts the direction of $B \rightarrow C$. 
\end{proof}
\section{Graphical preliminaries} \label{apx:sec:graph}

\paragraph*{Graphs, vertices, edges} A graph $\g= (V,F) $ consists of a set of vertices (variables) $ V$ and a set of edges $ F $. The graphs we consider are allowed to contain directed ($\rightarrow$) and  undirected ($-$) edges and at most one edge between any two vertices. 
We can thus partition the set of edges $F$ into a set of directed edges $E$ and undirected edges $U$ and denote graph $\g= (V,F) $ as $\g= (V,E,U)$. The corresponding undirected graph is simply $\g_U = (V, \emptyset, U)$.

\paragraph*{Subgraphs and skeleton} An \textit{induced subgraph} $\g_{ {V'}} =(V',F')$ of $\g= (V,F) $ consists of  $V' \subseteq V$ and  $F' \subseteq F$ where $F'$ are all edges in $F$ between vertices in $V'$.  A \emph{skeleton} of a graph $\g= (V,F) $ is an undirected graph $\g = (V, F')$, such that $F'$ are undirected versions of all edges in $F$.

\paragraph*{Paths. Directed, undirected, causal, non-causal, proper paths} A \textit{path} $p$ from $i$ to $j$ in $\g$ is a sequence of distinct vertices $p= \langle i, \dots,j \rangle$ in which every pair of successive vertices are adjacent.  A path consisting of undirected edges is an \textit{undirected} path.
A  \textit{directed path} from $i$ to $j$ is a path from $i$ to $j$ in which all edges are directed towards $j$, that is, $i \to \dots\to j$. We will use \textit{causal path} instead of \textit{directed  path} when talking about causal graphs. Let $p = \langle v_1, \dots , v_k \rangle$, $k > 1$ be a path in $\g$, $p$ is a \textit{possibly directed path} (\textit{possibly causal path}) if no edge $v_i \leftarrow v_j, 1 \le i < j \le k$ is in $\g$. Otherwise, $p$ is a \textit{non-causal path} in $\g$ (see Definition 3.1 and Lemma 3.2 of \citealp{perkovic17}).  A path from ${A}$ to ${Y}$ is \textit{proper} (w.r.t. ${A}$) if only its first vertex is in ${A}$.

\paragraph*{Directed cycles} 
A directed  path from $i$ to $j$ and the  edge $j\to i$ form a \textit{directed cycle}.

\paragraph*{Colliders, shields and definite status paths} 
If a path $p$ contains $i \rightarrow j \leftarrow k$ as a subpath, then $j$ is a \textit{collider} on $p$. A path $\langle i,j,k \rangle$ is an \emph{(un)shielded triple} if $ i $ and $ k$ are (not) adjacent. A path is \textit{unshielded} if all successive triples on the path are unshielded. 
A node $v_{j}$ is a \textit{definite non-collider} on a path $p$ if there is at least one edge out of $v_{j}$ on $p$, or if $v_{j-1} - v_j - v_{j+1}$ is a subpath of $p$ and $\langle v_{j-1} , v_j , v_{j+1}\rangle$ is an unshielded triple. A node is of \textit{definite status} on a path if it is a collider, a definite non-collider or an endpoint on the path. A path $p$ is of definite status if every node on $p$ is of definite status.

\paragraph*{Subsequences and subpaths} 
A \textit{subsequence} of a path $p$ is obtained by deleting some nodes from $p$ without changing the order of the remaining nodes. A subsequence of a path is not necessarily a path. For a path $p = \langle v_1,v_2,\dots,v_m \rangle$, the \textit{subpath} from $v_i$ to $v_k$ ($1\le i\le k\le m)$ is the path $p(v_i,v_k) = \langle v_i,v_{i+1},\dots,a_{k}\rangle$.

\paragraph*{Ancestral relations} 
If $i\to j$, then $i$ is a \textit{parent} of $j$, and $j$ is a \textit{child} of $i$. If there is a causal path from $k$ to $l$, then $k$ is an \textit{ancestor} of $l$, and $l$ is a \textit{descendant} of $k$.  If there is a possibly causal path from $k$ to $l$, then $k$ is a \textit{possible ancestor} of $l$, and $l$ is a \textit{possible descendant} of $k$. We use the convention that every vertex is a descendant, ancestor, possible ancestor and possible descendant of itself.
The sets of parents,  ancestors,  descendants and possible descendants of $i$ in~$\g$ are denoted by $\Pa(i,\g)$, $\An(i,\g)$, $\De(i,\g)$ and $\PossDe(i,\g)$ respectively. For a set of vertices ${A}$, we let $\Pa({A},\g) =( \cup_{i \in {A}}  \Pa(i,\g)) \setminus {A}$, whereas, $\An({A},\g) = \cup_{i \in {A}} \An(i,\g)$,  $\De({A},\g) = \cup_{i \in {A}} \De(i,\g)$ and $\PossDe({A},\g) = \cup_{i \in {A}} \PossDe(i,\g)$

\paragraph*{DAGs, PDAGs}
A \textit{directed graph} contains only directed edges.
A \textit{partially directed graph} may contain both directed and undirected edges.
A directed graph without directed cycles is a \textit{directed acyclic graph $(\DAG)$}.  A \textit{partially directed acyclic graph $(\PDAG)$} is a partially directed graph without directed cycles.

\paragraph*{Blocking and d-separation} (See Definition 1.2.3 of \cite{pearl2009causality} and Lemma C.1  of \cite{henckel2019graphical}). Let $Z$ be a set of vertices in an PDAG $\g = (V,E,U)$. A definite status path $p$ is  \textit{blocked} by ${Z}$ if (i) $p$ contains a non-collider that is in ${Z}$, or (ii) $p$ contains a collider $C$ such that no descendant of $C$ is in ${Z}$.
A definite status path that is not blocked by ${Z}$ is \textit{open} given ${Z}$. If ${A},{B}$ and ${Z}$ are three pairwise disjoint sets of nodes in a PDAG $\g= (V,E,U)$, then ${Z}$  \textit{d-separates} ${A}$ from ${B}$ in $\g$ if ${Z}$ blocks every definite status path between any node in ${A}$ and any node in ${B}$ in $\g$.

\paragraph*{CPDAGs, MPDAGs}
 Several DAGs can encode the same d-separation relationships. Such DAGs form a \emph{Markov equivalence class} which is uniquely represented  by a \emph{completed partially directed acyclic graph} (CPDAG) \citep{meek1995causal,andersson1997characterization}.  
A PDAG $\g =(V,E,U)$ is a \textit{maximally oriented} PDAG (MPDAG) if it is closed under orientation rules R1-R4 of \citep{meek1995causal}, presented in Figure \ref{fig:orientationRules}. The MPDAG can then be alternatively defined as any PDAG that does not contain graphs on the left-hand side of each orientation rule as induced subgraphs. Both  DAGs  and CPDAGs are types of MPDAGs \citep{meek1995causal}.

 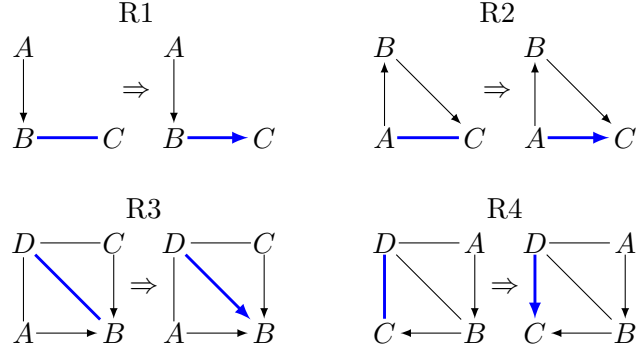
\begin{figure}[!tb]
\vspace{-.2cm}
\centering
\begin{tikzpicture}[->,>=latex,shorten >=1pt,auto,node distance=1.2cm,scale=1,transform shape]
  \tikzstyle{state}=[inner sep=0.5pt, minimum size=5pt]

\node[state] (Xia) at (0,0) {$B$};
  \node[state] (Xka) at (0,1.2) {$A$};
  \node[state] (Xja) at (1.2,0) {$C$};

  \path (Xka) edge (Xia);
 \draw[-,line width=1.1pt,blue]
          (Xia) edge (Xja);

  \coordinate [label=above:R1] (L1) at (1.5,1.4);
  \coordinate [label=above:$\Rightarrow$] (L2) at (1.5,0.4);

  \node[state] (Xib) at (2,0) {$B$};
  \node[state] (Xkb) at (2,1.2) {$A$};
  \node[state] (Xjb) at (3.2,0) {$C$};

  \path (Xkb) edge (Xib);
   \draw[->,line width=1.1pt,blue]   (Xib) edge (Xjb);

\node[state] (Xic) at (4.8,0) {$A$};
  \node[state] (Xkc) at (4.8,1.2) {$B$};
  \node[state] (Xjc) at (6,0) {$C$};

  \path (Xic) edge (Xkc)
          (Xkc) edge (Xjc);
   \draw[-,line width=1.1pt,blue]
          (Xic) edge (Xjc);

  \coordinate [label=above:R2] (L2) at (6.3,1.4);
  \coordinate [label=above:$\Rightarrow$] (L2) at (6.3,0.4);

  \node[state] (Xid)  at (6.8,0) {$A$};
  \node[state] (Xkd)  at (6.8,1.2) {$B$};
  \node[state] (Xjd)  at (8,0) {$C$};

  \path (Xid) edge (Xkd)
          (Xkd) edge (Xjd);
   \draw[->,line width=1.1pt,blue]  (Xid) edge (Xjd);

\node[state] (Xie) at (0,-1.4) {$D$};
  \node[state] (Xke) at (1.2,-1.4) {$C$};
  \node[state] (Xle) at (0,-2.6) {$A$};
  \node[state] (Xje) at (1.2,-2.6) {$B$};

  \path (Xke) edge (Xje)
          (Xle) edge (Xje);

  \draw[-,line width=1.1pt,blue]  (Xie) edge (Xje);
  \path[-]
          (Xke) edge (Xie)
          (Xle) edge (Xie);

  \coordinate [label=above:R3] (L3) at (1.6,-1.2);
    \coordinate [label=above:$\Rightarrow$] (L3) at (1.6,-2.2);

  \node[state] (Xif) at (2,-1.4) {$D$};
  \node[state] (Xkf) at (3.2,-1.4) {$C$};
  \node[state] (Xlf) at (2,-2.6) {$A$};
  \node[state] (Xjf) at (3.2,-2.6) {$B$};

  \draw[->,line width=1.1pt,blue]  (Xif) edge (Xjf);

  \path (Xkf) edge (Xjf)
          (Xlf) edge (Xjf);
  \path[-]
          (Xkf) edge (Xif)
          (Xlf) edge (Xif);

\node[state] (Xig) at (4.8,-1.4) {$D$};
  \node[state] (Xjg) at (6,-1.4) {$A$};
  \node[state] (Xkg) at (4.8,-2.6) {$C$};
  \node[state] (Xlg) at (6,-2.6) {$B$};

  \path (Xlg) edge (Xkg)
          (Xjg) edge (Xlg);
  \draw[line width=1.1pt,blue,-]
           (Xig) edge (Xkg);
  \path[-]
          (Xig) edge (Xjg)
          (Xig) edge (Xlg);

  \coordinate [label=above:R4] (L4) at (6.4,-1.2);
  \coordinate [label=above:$\Rightarrow$] (L4) at (6.4,-2.2);

  \node[state] (Xih) at (6.8,-1.4) {$D$};
  \node[state] (Xjh) at (8,-1.4) {$A$};
  \node[state] (Xkh) at (6.8,-2.6) {$C$};
  \node[state] (Xlh) at (8,-2.6) {$B$};

  \draw[blue,line width=1.1pt,->]  (Xih) edge (Xkh);
  \path (Xlh) edge (Xkh)
        (Xjh) edge (Xlh);
  \path[-]
          (Xih) edge (Xjh)
           (Xih) edge (Xlh);

\end{tikzpicture}
\caption{The orientation rules from~\cite{meek1995causal}. If the graph on the left-hand side of a rule is an induced subgraph of a PDAG $\g$, then \textit{orient} the blue undirected edge ({\color{blue} {\bfseries $-$}}) as shown on the right-hand side of the rule. Hence,  the graphs on the left-hand side of each rule are not allowed to be induced subgraphs of an MPDAG. }
\label{fig:orientationRules}
\end{figure}

\paragraph*{Background knowledge and constructing MPDAGs}
A PDAG $\g'$ is \textit{represented} by another PDAG $\g$ (equivalently $\g$ represents $\g'$) if $\g'$ and $\g$ have the same adjacencies and unshielded colliders and every directed edge $i \rightarrow j$ in $\g$ is also in $\g'$.
Let ${R}$ be a set of directed edges representing background knowledge. \cref{algo:MPDAG} of \cite{meek1995causal} describes how to incorporate background knowledge ${R}$ in an MPDAG $\g$. If \cref{algo:MPDAG} does not return a FAIL, then it returns a new MPDAG $\g'$ that is represented by $\g$. Background knowledge ${R}$ is \textit{consistent} with MPDAG $\g$ if and only if \cref{algo:MPDAG} does not return a FAIL \citep{meek1995causal}.
\unskip
\begin{algorithm}[!htb]
\KwData{MPDAG $\g$, background knowledge ${R}$}
 \KwResult{MPDAG $\g'$ or FAIL}
 Let $\g' = \g$\;
 \While{${R}\neq \emptyset$}{
  Choose an edge $\{X \rightarrow Y\}$ in ${R}$ \;
  ${R} = {R} \setminus \{X \rightarrow Y\}$ \;
  \eIf{$\{X - Y\}$ or $\{X\rightarrow Y\}$ is in $\g'$}{
   Orient $\{X \rightarrow Y\}$ in $\g'$\;
   Close the edge orientations under the rules in Figure~\ref{fig:orientationRules} in $\g'$\;
   }{
   FAIL\;
  }
 }
\caption{\texttt{ConstructMPDAG}, \citep{meek1995causal,perkovic17}}
\label{algo:MPDAG}
\end{algorithm}
\begin{remark}
 The MPDAG output by \texttt{ConstructMPDAG}($\g$, ${R}$) is the same independent of the ordering of edges in ${R}$. This stems from the fact that  the orientation rules of  \cite{meek1995causal}  are necessary and sufficient for the construction of an MPDAG given a set of adjacencies and unshielded colliders.
\end{remark}

\paragraph*{$\g$ and $[\g]$} 
If $\g$ is a MPDAG, then $[\g]$ denotes every DAG represented by $\g$.

\paragraph*{Causal and partial causal ordering of vertices}
  A total ordering, $<$, of vertices $ {V'} \subseteq  {V}$ is \textit{consistent} with a DAG $\g[D] = ( {V,E, \emptyset})$ and called a \textit{causal ordering} of $ {V'}$ if for every $i, j \in  {V'}$, such that $i < j$ and such that $i$ and $j$ are adjacent in $\g[D]$, $i \rightarrow j$ is in $\g[D]$. There can be more than one causal ordering of $ {V'}$ in a DAG $\g[D] = ( {V,E, \emptyset})$. For example, in DAG $i \leftarrow j \rightarrow k$ both orderings $j < i < k$ and $j <k < i$ are consistent.

 Since an MPDAG may contain undirected edges, there is generally no unique causal ordering of vertices in an MPDAG. Instead, we define a \emph{partial causal ordering,} $<$, of a vertex set $ V'$, $V' \subset V$ in an MPDAG $\g = ( {V,E,U})$ as a total ordering of pairwise disjoint vertex sets $ {A_1}, \dots ,  {A_k},$ $k \ge 1$, $\cup_{i=1}^{k}  {A_i} =  {V'}$, that satisfy the following: if $ {A_i} <  {A_j}$ and there is an edge between $i \in  {A_i}$ and $j \in  {A_j}$ in $\g$, then $i \rightarrow j$ is in $\g$.

\paragraph*{Buckets and bucket decomposition} 
\cref{alg:pto} describes how to obtain an ordered bucket decomposition for a set of vertices $ {V}$ in an MPDAG $\g = (V,E,U)$. By \citet[Lemma 1]{perkovic20}, the ordered list of buckets output by Algorithm \ref{alg:pto} is a partial causal ordering of $V$ in $\g$. 
 
\begin{algorithm}[!htb]
 \Input{vertex set $ {V}$ in  MPDAG $\mathcal{G}{=}({V,E,U})$ and MPDAG $\g$.}
 \Output{An ordered list $ {B} {=} ( {B_1}, \dots ,  {B_k}), k \ge 1,$ of the bucket decomposition of $ {V}$ in $\g$.}
   \SetAlgoLined
  Let $\g_{U}$ denote the undirected subgraph of $\g$;\\
   Let $ {ConComp}$ be the bucket decomposition (i.e., maximal connected components) of $ {V}$ in $\g_{U}$;\\
   Let $ {B}$ be an empty list;\\
\While{$ {ConComp}\neq \emptyset$}{
Let  $ {C} $ be any element from ${ConComp}$;\\
Let $\overline{ {C}}$ be the set of vertices in $ {ConComp}$ that are not in $ {C}$;\\
\If{all edges between  $ {C}$ and  $\overline{ {C}}$   are into $ {C}$ in $\g$}
{Remove $ {C}$ from $ {ConComp}$;\\
 { Add $C$ to the beginning of $ {B}$;}
}
}
\Return  $ {B}$;\\
\caption{Partial causal ordering \citep{perkovic20}}
\label{alg:pto}
\end{algorithm}

\begin{lemma}\label{lemma:big2} (see Lemma D.1 (i)  of \citealp{perkovic20})
 Let $A$  and $Y$ be disjoint node sets in MPDAG $\g =(V,E,U)$. Suppose that there is no proper possibly causal path from $A$ to $Y$ that starts with an undirected edge in $\g$, that is, suppose that the criterion in Theorem \ref{thm:id-criterion} is satisfied. Further, let $D = \An(Y, \g_{V \setminus A})$ and $D= \dot{\bigcup}_{i=1}^K D_i$ for $D_i = D \cap B_i$, $i=1,\dots,K$, where $B_1, \dots B_K$ is the bucket decomposition of $V$.
Then for all $i \in \{1, \dots ,K\}$,  there is no proper possibly causal path from $A$ to $B_i$ that starts with an undirected edge in $\g$.
\end{lemma}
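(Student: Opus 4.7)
The plan is to argue by contradiction: suppose for some $i$ there exists a proper possibly causal path $p=\langle a=v_1,v_2,\dots,v_n=b\rangle$ from $a\in A$ to $b\in B_i$ whose first edge $v_1-v_2$ is undirected, and construct from it a proper possibly causal path from $A$ to $Y$ also starting with an undirected edge, which contradicts \cref{thm:id-criterion}. The construction below is substantive in the case $D_i\ne\emptyset$, which is the case actually used in the proof of \cref{lem:pa-D}; we argue under this assumption.

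First, I would extend $p$ into a possibly causal walk reaching $Y$. Pick any $d\in D_i\subseteq B_i$. Because $B_i$ is by definition a maximal undirected-connected component of $\g$, there is an undirected path $q$ from $b$ to $d$ entirely within $B_i$. Because $d\in D=\An(Y,\g_{V\setminus A})$, there is a directed path $r$ from $d$ to $Y$ entirely within $\g_{V\setminus A}$. The concatenation $W:=p\cdot q\cdot r$ is a walk from $a$ to $Y$. Every edge along $W$ is either undirected or a forward-directed arrow (contributions from $p$ are possibly causal, from $q$ are all undirected, and from $r$ are all directed forward), so $W$ contains no edge $v_l\leftarrow v_s$ with $l<s$, i.e., $W$ is possibly causal as a walk, and its opening edge is the original undirected $v_1-v_2$.

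Next, I would restrict $W$ to a clean tail. Let $a^\ast\in A$ denote the vertex of $A$ occurring latest on $W$ (by position on the walk), and let $W'$ be the subwalk of $W$ starting at this occurrence. All internal vertices of $p$ lie in $V\setminus A$ by properness of $p$, and all vertices of $r$ lie in $V\setminus A$ by construction, so any later $A$-vertex on $W$ must lie on $q$ (as an element of $A\cap B_i$ or as a revisit of $a$). Accordingly, the edge immediately following $a^\ast$ on $W$ is undirected: it is either the original $v_1-v_2$ (when $a^\ast$ is the occurrence at position $1$) or an edge of $q$ (which is undirected). Therefore $W'$ is a possibly causal walk from $a^\ast\in A$ to $Y$ beginning with an undirected edge, and $a^\ast$ is its only vertex in $A$.

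Finally, I would extract a simple path $p^\ast$ from $W'$ by cycle-shortcutting: whenever a vertex repeats as $u_i=u_\ell$ with $i<\ell$, delete the block $u_{i+1},\dots,u_\ell$ and rejoin with the edge $u_iu_{\ell+1}$, which already lies in $W'$ and is therefore forward-directed or undirected. By always preserving the earlier occurrence, the opening pair $(u_1,u_2)$ of $W'$ is never excised, so $p^\ast$ still begins with the undirected edge $u_1-u_2$; every retained edge is forward-directed or undirected, so $p^\ast$ is possibly causal; and the vertex set of $p^\ast$ is contained in that of $W'$, so $a^\ast$ remains its only vertex in $A$, making $p^\ast$ proper. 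Hence $p^\ast$ contradicts \cref{thm:id-criterion}. The main obstacle is this final step: verifying that the shortcutting can be carried out so as to preserve both the possibly-causal property and the undirected initial edge. The key is the observation that every shortcut reuses an edge already present in $W'$, so no new backward arrow is ever introduced, and the ``always cut the later occurrence'' rule protects the opening edge throughout the reduction.
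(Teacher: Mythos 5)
Your construction (append an undirected path inside $B_i$ to reach some $d\in D_i$, then a directed path to $Y$, cut back to the last $A$-vertex, then shortcut repeats) is natural, but it only delivers a path on which \emph{every edge} is undirected or points forward. That is strictly weaker than the paper's notion of a possibly causal path: by the definition in \cref{apx:sec:graph}, $\langle v_1,\dots,v_k\rangle$ is possibly causal only if \emph{no} edge $v_l\leftarrow v_r$ with $l<r$ is present in $\g$ --- including edges between vertices that are not consecutive on the path. Splicing $q$ and $r$ onto $p$ can easily create such backward chords (e.g.\ a vertex on $r$ may be a parent in $\g$ of an early vertex of $p$ or $q$), and your shortcutting rule removes repeated vertices but does nothing about chords. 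The gap is not cosmetic: in the MPDAG with edges $x_1-x_2$, $x_2-x_3$, $x_3\rightarrow x_1$, the path $x_1-x_2-x_3$ has only undirected edges yet is not possibly causal, and in fact \emph{no} possibly causal path from $x_1$ to $x_3$ exists; so ``all edges undirected or forward'' does not even imply the existence of a possibly causal path between the same endpoints. Consequently the object $p^\ast$ you produce need not violate \cref{thm:id-criterion}, and the contradiction is not established. Closing this hole is exactly where the real work lies: the argument in \citet{perkovic20} (Lemma D.1, which this paper simply cites rather than reproves) proceeds by taking a shortest offending path and using Meek's rules R1--R2 to rule out backward chords step by step; some argument of that kind is unavoidable here.

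Two smaller points. First, you prove the claim only when $D_i\neq\emptyset$; that is indeed the only case invoked in the proof of \cref{lem:pa-D}, but the lemma as stated asserts the conclusion for all $i$, so your proof does not cover the statement as written (and for buckets with $D_i=\emptyset$ that meet $A$ the statement needs care, which is another reason to be explicit about what is being proved). Second, the bookkeeping you do get right --- properness of the tail after cutting at the last $A$-occurrence, and preservation of the opening undirected edge under ``keep the earlier occurrence'' shortcutting --- is fine, but it all lives at the level of the weak (edgewise) notion and so cannot by itself reach the conclusion.
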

\section{Additional simulation results} \label{apx:sec:simu}
In this section, we report additional simulation results. 
\subsection{With estimated graphs}

The setup is the same as \cref{sec:simu} of main text, but we replace the true CPDAG with the CPDAG estimated with the greedy equivalence search algorithm \citep{chickering2002optimal} based on the same sample. The relative squared errors of the contending estimators are shown in \cref{fig:simu-graph-GES} and are summarized in \cref{tab:simu-graph-GES}. Compared to the results with the true CPDAG, the performance improvement of $\mathcal{G}$-regression is more modest but still matters in practice. The reduced improvement is due to the error in estimating the graph, which diminishes as $n$ increases. 

\begin{table}[!t]
\begin{center}
\caption{Geometric average (brackets: median) of relative squared errors compared to $\g$-regression when CPDAGs are estimated} \label{tab:simu-graph-GES}
\scriptsize
\begin{tabular}{lrlrlcrlrlcrlrl}
\toprule[1.2pt]
&\multicolumn{4}{c}{$|V|=20$}&&\multicolumn{4}{c}{$|V|=50$}&&\multicolumn{4}{c}{$|V|=100$}\\ 
\cline{2-5}\cline{7-10}\cline{12-15}
$|A|$ & \multicolumn{2}{c}{$n=100$} & \multicolumn{2}{c}{$n=1000$} && \multicolumn{2}{c}{$n=100$} & \multicolumn{2}{c}{$n=1000$} && \multicolumn{2}{c}{$n=100$} & \multicolumn{2}{c}{$n=1000$} \\ 
\midrule
\texttt{adj.O}&&&&&&&&\\ 
1&1.0 &(1.0)&1.0 &(1.0)&&1.2 &(1.0)&1.3 &(1.0)&&1.8 &(1.1)&1.6 &(1.0)\\ 
2&2.0 &(1.1)&3.1 &(1.2)&&2.4 &(1.3)&3.1 &(1.4)&&3.2 &(1.9)&3.7 &(2.0)\\ 
3&3.3 &(1.7)&5.2 &(2.7)&&4.0 &(2.4)&5.9 &(2.8)&&4.7 &(2.5)&5.5 &(2.8)\\ 
4&4.6 &(2.2)&7.9 &(4.2)&&5.0 &(2.1)&9.0 &(5.7)&&10 &(5.9)&8.9 &(5.6)\\ 
\texttt{IDA.M}&&&&&&&&\\ 
1&2.9 &(1.4)&4.1 &(1.4)&&4.5 &(2.7)&10 &(5.7)&&7.3 &(4.5)&18 &(11)\\ 
2&4.2 &(2.0)&6.6 &(2.1)&&7.3 &(4.8)&14 &(7.2)&&13 &(7.9)&22 &(14)\\ 
3&6.2 &(3.1)&6.8 &(2.5)&&12 &(7.1)&16 &(8.3)&&15 &(10)&28 &(18)\\ 
4&9.5 &(5.6)&9.0 &(3.1)&&13 &(10)&20 &(12)&&19 &(14)&37 &(26)\\ 
\texttt{IDA.R}&&&&&&&&\\ 
1&2.9 &(1.4)&4.1 &(1.4)&&4.5 &(2.7)&10 &(5.7)&&7.3 &(4.5)&18 &(11)\\ 
2&2.7 &(1.3)&4.6 &(1.2)&&4.5 &(2.3)&9.6 &(4.0)&&8.5 &(5.9)&15 &(9.5)\\ 
3&3.1 &(1.5)&4.1 &(1.2)&&5.8 &(3.0)&7.8 &(2.5)&&7.6 &(5.2)&14 &(8.9)\\ 
4&3.6 &(1.6)&4.2 &(1.3)&&4.9 &(2.8)&8.2 &(3.6)&&8.1 &(5.4)&15 &(10)\\
\bottomrule[1.2pt]
\end{tabular}
\end{center}
\end{table}

\begin{figure}[!htb]
\centering
\includegraphics[width=1\textwidth]{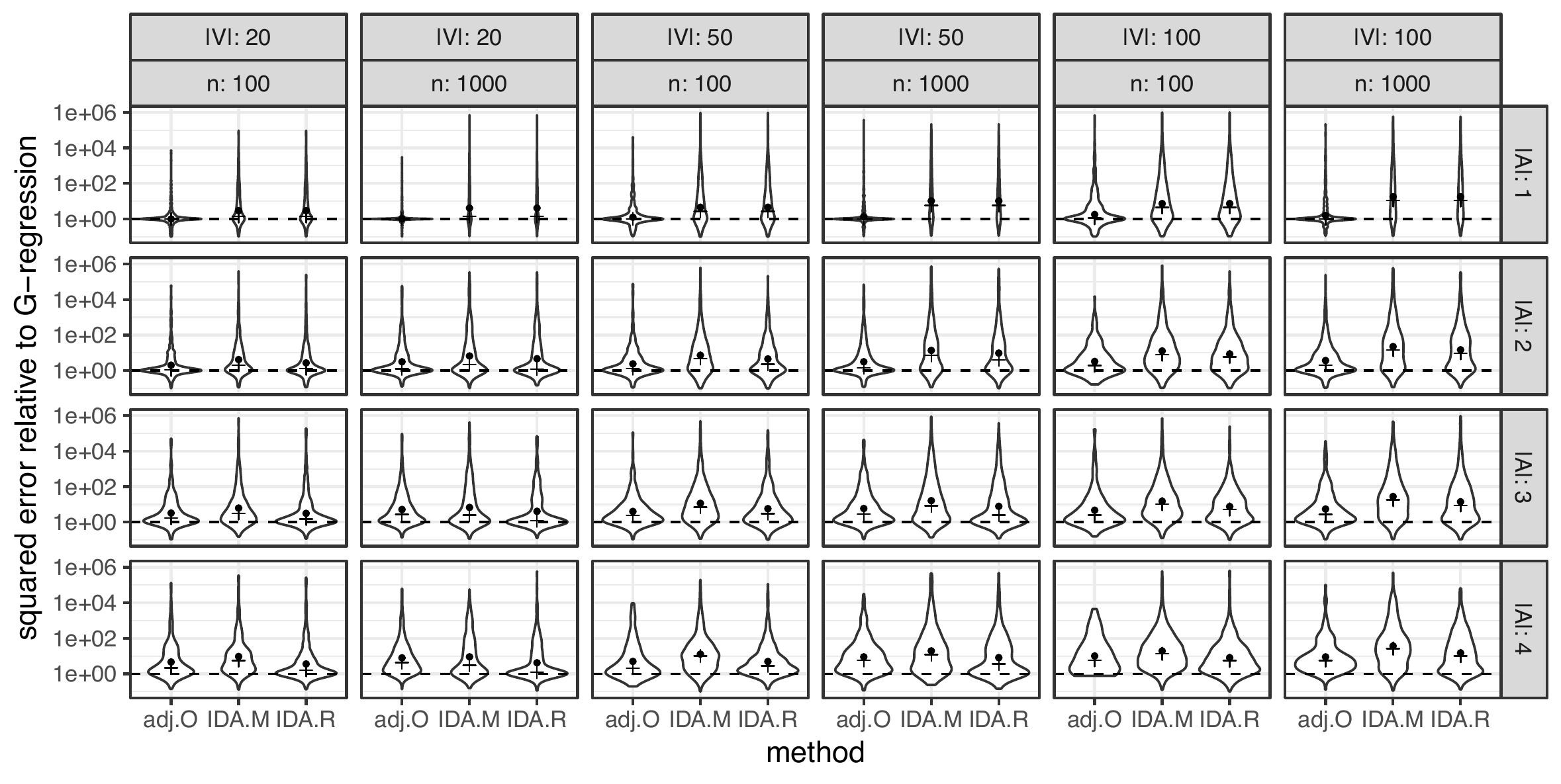}
\caption{Violin plots for the relative squared errors of contending estimators (`$\cdot$': geometric mean, `$+$': median). The estimated CPDAGs are provided to the estimators.}
\label{fig:simu-graph-GES}
\end{figure}

\subsection{With rescaled coefficients}
It can be argued that the data generating mechanism described in \cref{sec:simu} is less realistic because the variance of a vertex grows with its topological ordering. To remedy this, we rescale the coefficients $\{\gamma_{ij}\}$ properly such that the variances are roughly equalized along the topological order. The results (based on true graphs) are reported in \cref{tab:simu-rescaled}. 

\begin{table}[!t]
\begin{center}
\caption{Geometric average (brackets: median) of relative squared errors compared to $\g$-regression (coefficients rescaled)} \label{tab:simu-rescaled}
\scriptsize
\begin{tabular}{lrlrlcrlrlcrlrl}
\toprule[1.2pt]
&\multicolumn{4}{c}{$|V|=20$}&&\multicolumn{4}{c}{$|V|=50$}&&\multicolumn{4}{c}{$|V|=100$}\\ 
\cline{2-5}\cline{7-10}\cline{12-15}
$|A|$ & \multicolumn{2}{c}{$n=100$} & \multicolumn{2}{c}{$n=1000$} && \multicolumn{2}{c}{$n=100$} & \multicolumn{2}{c}{$n=1000$} && \multicolumn{2}{c}{$n=100$} & \multicolumn{2}{c}{$n=1000$} \\ 
\midrule
\texttt{adj.O}&&&&&&&&\\ 
1&1.0 &(1.0)&1.0 &(1.0)&&1.0 &(1.0)&1.0 &(1.0)&&1.0 &(1.0)&1.0 &(1.0)\\ 
2&1.1 &(1.0)&1.1 &(1.0)&&1.1 &(1.0)&1.0 &(1.0)&&1.1 &(1.0)&1.0 &(1.0)\\ 
3&1.2 &(1.0)&1.1 &(1.0)&&1.1 &(1.0)&1.0 &(1.0)&&1.1 &(1.0)&1.0 &(1.0)\\ 
4&1.2 &(1.0)&1.1 &(1.0)&&1.1 &(1.0)&1.1 &(1.0)&&1.1 &(1.0)&1.0 &(1.0)\\  
\texttt{IDA.M}&&&&&&&&\\ 
1&1.0 &(1.0)&1.0 &(1.0)&&1.0 &(1.0)&1.0 &(1.0)&&1.0 &(1.0)&1.0 &(1.0)\\ 
2&1.1 &(1.0)&1.0 &(1.0)&&1.1 &(1.0)&1.0 &(1.0)&&1.2 &(1.0)&1.0 &(1.0)\\ 
3&1.2 &(1.0)&1.1 &(1.0)&&1.2 &(1.0)&1.0 &(1.0)&&1.2 &(1.0)&1.0 &(1.0)\\ 
4&1.2 &(1.0)&1.0 &(1.0)&&1.2 &(1.0)&1.0 &(1.0)&&1.2 &(1.0)&1.0 &(1.0)\\
\texttt{IDA.R}&&&&&&&&\\ 
1&1.0 &(1.0)&1.0 &(1.0)&&1.0 &(1.0)&1.0 &(1.0)&&1.0 &(1.0)&1.0 &(1.0)\\ 
2&1.1 &(1.0)&1.0 &(1.0)&&1.1 &(1.0)&1.0 &(1.0)&&1.2 &(1.0)&1.0 &(1.0)\\ 
3&1.1 &(1.0)&1.0 &(1.0)&&1.1 &(1.0)&1.0 &(1.0)&&1.1 &(1.0)&1.0 &(1.0)\\ 
4&1.2 &(1.0)&1.0 &(1.0)&&1.1 &(1.0)&1.0 &(1.0)&&1.1 &(1.0)&1.0 &(1.0)\\
\bottomrule[1.2pt]
\end{tabular}
\end{center}
\end{table}

\vskip 0.2in

\end{document}